\numberwithin{equation}{section}       % Number formulas within sections
\numberwithin{figure}{section}       % Number figures within sections
\newcommand{\arcThroughThreePoints}[4][]{
\coordinate (middle1) at ($(#2)!.5!(#3)$);
\coordinate (middle2) at ($(#3)!.5!(#4)$);
\coordinate (aux1) at ($(middle1)!1!90:(#3)$);
\coordinate (aux2) at ($(middle2)!1!90:(#4)$);

\coordinate (center) at ($(intersection of middle1--aux1 and middle2--aux2)$);
\draw[#1] 
 let \p1=($(#2)-(center)$),
      \p2=($(#4)-(center)$),
      \n0={veclen(\p1)},       % Radius
      \n1={atan2(\x1,\y1)}, % angles
      \n2={atan2(\x2,\y2)},
      \n3={\n2>\n1?\n2:\n2+360}
    in (#2) arc(\n1:\n3:\n0);
\draw[ultra thick] 
 let \p1=($(#3)-(center)$),
      \p2=($(#4)-(center)$),
      \n0={veclen(\p1)},       % Radius
      \n1={atan2(\x1,\y1)}, % angles
      \n2={atan2(\x2,\y2)},
      \n3={\n2>\n1?\n2:\n2+360}
    in (#3) arc(\n1:\n3:\n0);
}
\newtheorem{theo}{Theorem}
\newtheorem{prop}{Proposition}[section]
\newtheorem{coro}[prop]{Corollary}
\newtheorem{lemm}[prop]{Lemma}
\newtheorem{Thm}[prop]{Theorem}
\newtheorem{theoalph}{Theorem}
\newtheorem{propalph}[theoalph]{Proposition}
\newtheorem{rema}[prop]{Remark}
\newcommand{\C}{\mathbb{C}}
\renewcommand{\D}{\mathbb{D}}
\renewcommand{\H}{\mathbb{H}}
\newcommand{\Q}{\mathbb{Q}}
\newcommand{\R}{\mathbb{R}}
\newcommand{\Z}{\mathbb{Z}}
\newcommand{\cO}{\mathcal{O}}
\newcommand{\fM}{\mathfrak{M}}
\newcommand{\ttheta}{\widetilde{\theta}}
\newcommand{\dd}{\hspace{1pt}\operatorname{d}\hspace{-1pt}}
\DeclareMathOperator{\hyp}{hyp} % For hyperbolic dimension or distance
\DeclareMathOperator{\SL}{SL}
\newcommand{\CC}{\overline{\C}}% Riemann sphere
\newcommand{\Fh}{\operatorname{h}_{\operatorname{F}}}
\newcommand{\Fmuess}{\upmu_{\operatorname{F}}^{\operatorname{ess}}}
\newcommand{\LL}{\mathcal{L}}
\newcommand{\Lbar}{\overline{\mathcal{L}}}
\newcommand{\LLL}{\overline{\LL}}
\newcommand{\height}[1]{\operatorname{h}_{#1}}
\newcommand{\muess}[1]{\upmu_{#1}^{\operatorname{ess}}}
\newcommand{\muabs}[1]{\upmu_{#1}^{\operatorname{abs}}}
\newcommand{\ghyp}{g_{\hyp}}
\DeclareMathOperator{\Gal}{\textsc{Gal}}
\newcommand{\whf}{j_\D}
\newcommand{\whh}{\widehat{h}}
\newcommand{\whj}{\widehat{j}}
\newcommand{\sg}{\sigma}
\newcommand{\east}{E_2^*}
\newcommand{\gH}{g_{\infty}}
\newcommand{\h}{\H}
\newcommand{\eps}{\varepsilon}
\newcommand{\ph}{\varphi}
\newcommand{\ce}{\C}
\newcommand{\q}{\Q}
\newcommand{\erre}{\R}
\newcommand{\z}{\Z}
\newcommand{\dv}{\textrm{div}}
\newcommand{\Spec}{\textrm{Spec}}
\newcommand{\p}{\mathbb{P}^1}
\DeclareMathOperator{\Pet}{Pet}
\DeclareMathOperator{\can}{can}
\newcommand{\npet}[1]{\Vert #1 \Vert_{\Pet}}
\newcommand{\npetv}[1]{\Vert #1 \Vert_{\Pet, v}}
\newcommand{\ncanv}[1]{\Vert #1 \Vert_{\can, v}}
\newcommand{\X}{\mathcal{X}}
\title{On the essential minimum of Faltings' height}
\author{
Jos\'e Ignacio Burgos Gil,\footnote{
Burgos was partially supported by the MINECO research projects MTM2013-42135-P and ICMAT Severo Ochoa SEV-2015-0554.
} \,
Ricardo Menares
\&
Juan Rivera-Letelier\footnote{
Rivera-Letelier was partially supported by FONDECYT grant 1141091.
}
}
\newcommand{\Addresses}{{
\bigskip
\footnotesize
Jos\'e Ignacio Burgos Gil.
Instituto de Ciencias Matem\'aticas (CSIC-UAM-UCM-UCM3).
Calle Nicol\'as Ca\-bre\-ra~15, Campus UAB, Cantoblanco, 28049 Madrid,
Spain
\\
\texttt{burgos@icmat.es}
\\
\url{http://www.icmat.es/miembros/burgos}

\medskip
Ricardo Menares.
Instituto de Matem\'aticas, Pontificia Universidad Cat\'olica de Valpara\'iso, Blanco Viel 596, Cerro Bar\'on, Valpara\'iso, Chile.
\\
\texttt{ricardo.menares@pucv.cl}
\\
\url{http://ima.ucv.cl/academico/ricardo-menares/}

\medskip
Juan Rivera-Letelier.
Department of Mathematics, University of Rochester. Hylan Building, Rochester, NY 14627, U.S.A.
\\
\texttt{riveraletelier@gmail.com}
\\
\url{http://rivera-letelier.org}
}}
\begin{document}

\maketitle

\abstract{ We  study the essential minimum of the (stable) Faltings height on the moduli space of elliptic curves. We prove that, in contrast to the Weil height on a projective space and the N{\'e}ron-Tate height of an abelian variety, Faltings' height takes at least two values that are smaller than its essential minimum.
We also provide upper and lower bounds for this quantity that allow us to compute it  up to five decimal places. In addition, we give numerical evidence that there are at least four isolated values before the essential minimum.

One of the main ingredients in our analysis is a good approximation of the hyperbolic Green function associated to the cusp of the modular curve of level one.
To establish this approximation, we make an intensive use of distortion theorems for univalent functions.

Our results have been motivated and guided by numerical experiments that are described in detail in the companion files.
}

\section{Introduction}
In this article, we study the essential minimum of the (stable) Faltings height on the moduli space of elliptic curves.
Our main result is that, in contrast to the Weil height on a projective space and the N{\'e}ron-Tate height of an abelian variety, Faltings' height takes at least two values that are smaller than its essential minimum.
Actually, our numerical experiments suggest that there are at least four such values: The first one is taken at the class of elliptic curves with $j$\nobreakdash-invariant zero and the other three are taken at certain classes of elliptic curves whose $j$\nobreakdash-invariant is a root of unity.
We give a rigorous proof that there can be at most six classes of elliptic curves whose $j$\nobreakdash-invariant is a root of unity and whose Faltings' height is smaller than the essential minimum.

We now proceed to describe our results more precisely.
To recall the definition of Faltings' height, let $\h \coloneqq \left\{ \tau \in \ce : \Im(\tau) > 0 \right\}$ be the upper half-plane, and let~$\Delta \colon \H \to \C$ be the modular discriminant, normalized so that the product formula reads
$$ \Delta(\tau) \coloneqq q \prod_{n=1}^\infty (1-q^n)^{24}, \quad q=e^{2\pi i \tau}. $$
Furthermore, consider the hyperbolic Green function $g_\infty \colon \h \rightarrow \erre$, defined by
$$ g_\infty(\tau) \coloneqq  -\log  \left(\left(4 \pi \Im (\tau)\right) ^6 |\Delta (\tau)| \right) . $$
This function is invariant under the action of $\SL_2(\z)$ on $\h$.
Since $\Delta$ does not vanish on $\h$, the function~$g_\infty$ is finite and continuous.

Faltings' height is a numerical invariant attached to each abelian variety defined over a number field.
To define it in the case of an elliptic curve~$E$ defined over a number field~$K$, denote by~$\Delta_{E/K}$ the minimal discriminant of $E/K$.
Furthermore, for a given embedding $\sg \colon K \rightarrow \ce$, choose~$\tau_\sg \in \h$ such that
$$E_{\sg}(\C) \cong \ce /(\z + \tau_\sg \z),$$
where $E_{\sg}$ is the elliptic curve over $\C$ obtained from $E$ by base
change using $\sg$. 
Then the \emph{Faltings' height~$\Fh(E/K)$ of~$E/K$} can be defined as
\begin{equation}\label{defFaltings}
\Fh(E/K)
\coloneqq
\frac{1}{12[K:\q]} \left( \log \left| N_{K/\q} \left(\Delta_{E/K} \right) \right| + \sum_{\sg \colon K \rightarrow \ce  }^{ }  g_\infty(\tau_\sg) \right),
\end{equation}
see for example~\cite[Proposition~$1.1$]{Silverman}.\footnote{We warn the reader that there are different normalizations of~$\Fh$ in the literature, any two of them differing by an additive constant. In order to compare results by diverse authors, we have preferred a normalization different from the one in \emph{loc. cit.}}
If~$L/K$ is a finite extension such that $E_L\coloneqq E\otimes L$ is semistable, then it is not hard to check that $\Fh(E_L /L) \leq \Fh(E/K)$.
Moreover, the quantity $\Fh(E_L /L)$ does not depend on the choice of $L$.
In other words, on a given $\overline{\Q}$-isomorphism class of elliptic curves, Faltings' height attains its minimum at a semistable representative and its value does not depend on the choice of such semistable elliptic curve.

\emph{Faltings' height function~$\Fh$} is the induced function
$$ \Fh \colon \overline{\Q} \rightarrow \R $$
that to a given algebraic number $\alpha \in \overline{\Q}$ attaches the real number~$\Fh(\alpha) \coloneqq \Fh(E_\alpha/L)$, where $L$ is a number field containing~$\alpha$ and~$E_\alpha$ is a semistable elliptic curve defined over~$L$ with $j$-invariant equal to~$\alpha$.\footnote{In the literature this function is also called the ``stable Faltings height'' function.}
Faltings showed that the function $\Fh$ behaves as a height on the moduli space of elliptic curves (\emph{e.g.}, it satisfies Northcott's property) and became a standard tool in diophantine geometry.

Our main results concern the essential minimum~$\Fmuess$ of Faltings' height function, defined by
$$ \Fmuess
\coloneqq
\inf \left\{ \theta \in \erre : \textrm{ the set } \left\{ \alpha \in \overline{\Q} : \Fh(\alpha) \leq \theta \right\} \textrm{ is infinite} \right\}.$$ 
Note that the set
\begin{equation}
  \label{e:spectrum}
\left\{ \Fh(\alpha) : \alpha \in \overline{\Q} \right\} \setminus \left[ \Fmuess, \infty \right)
\end{equation}
is either finite, or formed by an increasing sequence converging to~$\Fmuess$.

In the case of the Weil height on a projective space, the N{\'e}ron-Tate height of an abelian variety, and the canonical height of a polarized dynamical system, the set corresponding to~\eqref{e:spectrum} is empty.
Our first main result is that, in contrast, the set~\eqref{e:spectrum} contains at least two elements: The first minimum of~$\Fh$ is~$\Fh(0)$, and the second~$\Fh(1)$.

\begin{theo}
\label{t:minima}
We have
\begin{equation}\label{cadena}
\Fh(0) < \Fh(1) < \Fmuess,
\end{equation}
and there exists $\kappa>0$ such that for every algebraic number~$\alpha \neq 0,1$ we have $\Fh(\alpha)\geq \Fh(1)+\kappa$.
\end{theo}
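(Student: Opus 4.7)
The plan is to tackle the three assertions of the theorem separately: the numerical ordering $\Fh(0)<\Fh(1)$, the strict upper bound $\Fh(1)<\Fmuess$, and the existence of the positive gap $\kappa$.

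\textbf{Computing $\Fh(0)$ and $\Fh(1)$.} The inequality $\Fh(0)<\Fh(1)$ will be verified by direct evaluation of~\eqref{defFaltings} on carefully chosen semistable representatives. For $j=0$ one takes $E_0\colon y^2+y=x^3$ over~$\Q$, which is semistable with minimal discriminant~$-27$ and whose complex periods are parametrised by $\rho\coloneqq e^{2\pi i/3}$, so that
\[
12\,\Fh(0)\;=\;\log 27 + g_\infty(\rho).
\]
For $j=1$ one first exhibits a semistable model $E_1$ over an appropriate finite extension of~$\Q$ (a quadratic extension is already forced by the relation $4A^3(1728-1)=27B^2$ for any Weierstrass form $y^2=x^3+Ax+B$ with $j=1$), computes its minimal discriminant, and evaluates the sum of~$g_\infty$ over the archimedean embeddings at the unique $\tau_1$ of the standard fundamental domain with $j(\tau_1)=1$. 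Both sides are transcendental but evaluable to arbitrary precision using the $q$\nobreakdash-expansion of~$\Delta$, and the numerical comparison yields $\Fh(0)<\Fh(1)$.

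\textbf{Bounding $\Fmuess$ from below.} For $\Fh(1)<\Fmuess$ the natural framework is to view $\Fh$ as the height on $\overline{\cM}_{1,1}$ attached to the Hodge bundle $\overline{\omega}$ with its Peters\-son metric, which is singular at the cusp. The appropriate version of Zhang's inequality for semipositive singular metrized line bundles (in the sense of Burgos--Philippon--Sombra) then gives
\[
12\,\Fmuess\;\geq\;\frac{\widehat{c}_1(\overline{\omega}^{\,12})^{2}}{2\deg(\omega^{12})}.
\]
The arithmetic self-intersection on the right has an explicit evaluation in closed form (after K\"uhn and Bost), involving $\zeta'(-1)$ together with an integral of an expression built from~$g_\infty$ against the Poincar\'e measure on the fundamental domain. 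To certify that this quantity strictly exceeds~$12\,\Fh(1)$ one must evaluate the integral with genuine precision, and this is where the paper's sharp approximations of~$g_\infty$ near the cusp, produced via the distortion theorems for univalent functions announced in the abstract, become indispensable.

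\textbf{Producing the gap~$\kappa$.} Once $\Fh(1)<\Fmuess$, the gap follows by finiteness. By the very definition of the essential minimum, for every $c$ with $\Fh(1)<c<\Fmuess$ the set $S_c\coloneqq\{\alpha\in\overline{\Q}:\Fh(\alpha)\leq c\}$ is finite. Using Silverman's two-sided comparison between~$\Fh$ and the Weil height of the $j$\nobreakdash-invariant, one can enumerate the finitely many candidate $j$\nobreakdash-invariants in~$S_c$ and verify case by case that only $\alpha=0,1$ satisfy $\Fh(\alpha)\leq\Fh(1)$; then
\[
\kappa\;\coloneqq\;\min\!\bigl(c-\Fh(1),\;\min\{\Fh(\alpha)-\Fh(1):\alpha\in S_c\setminus\{0,1\}\}\bigr)\;>\;0
\]
satisfies the conclusion of the theorem. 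The principal obstacle across the whole proof is concentrated in the second step: numerical experiments suggest that $\Fh(1)$ lies very close to~$\Fmuess$, so only a genuinely sharp evaluation of the arithmetic self-intersection $\widehat{c}_1(\overline{\omega}^{\,12})^{2}$ will rigorously separate them, which in turn is the reason for the delicate analytic work on $g_\infty$ advertised in the abstract.
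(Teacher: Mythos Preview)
Your second step has a genuine gap. The Zhang-type inequality
\[
12\,\Fmuess \;\geq\; \frac{\widehat{c}_1(\overline{\omega}^{\,12})^{2}}{2\deg(\omega^{12})}
\]
does not separate $\Fh(1)$ from $\Fmuess$: the arithmetic self-intersection on the right is known in closed form (K\"uhn), and the paper records the resulting bound in~\eqref{cota de Zhang} as $\Fmuess \geq 6\bigl(\tfrac{1}{2}\zeta(-1)+\zeta'(-1)\bigr) \approx -1.2425$, which lies well below $\Fh(0)\approx -0.7488$. This is a fixed number, not an integral awaiting sharper evaluation; Corollary~\ref{no Zhang} states explicitly that $\tfrac{1}{2}\height{\LLL}(\X) < \muabs{\LLL}$. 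Since your third step is conditional on the second, it also collapses.

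The paper's route is different. In place of Zhang's inequality it uses the elementary bound of Proposition~\ref{prop:1}: for any weight-one real section $s$ one has $\Fmuess \geq \inf_{\X(\C)} g_s$, and the force of the argument lies in choosing $s$ well. The key analytic input is Proposition~\ref{p:minima}, asserting that with $a = \partial_x \ghyp(1)\in(0,1)$ the function $g_1(\zeta)=\ghyp(\zeta)-a\log|\zeta|$ attains its minimum precisely at $\zeta=1$; this corresponds to the section $s=j^{a}\Delta$ and already gives $\Fmuess\geq\Fh(1)$. The gap $\kappa$ and the strict inequality are then obtained in one stroke by perturbing to $s=(j-1)^{\varepsilon}j^{a}\Delta$ for small $\varepsilon>0$ and checking directly that the associated Green function has infimum strictly above $\ghyp(1)$; no enumeration of small-height points is needed. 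The distortion-theorem work advertised in the abstract goes into proving Proposition~\ref{p:minima}, and it analyses $g_\infty$ near $e^{\pi i/3}$ (the locus $|j|\le 1$), not near the cusp.

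A smaller slip in your first step: $y^2+y=x^3$ has additive (cuspidal) reduction at~$3$, so it is not semistable over~$\Q$ and the $\log 27$ term is spurious. Since $0$ and $1$ are rational integers, \eqref{e:global/local} gives $12\,\Fh(0)=\ghyp(0)$ and $12\,\Fh(1)=\ghyp(1)$ with no non-archimedean contribution, and $\Fh(0)<\Fh(1)$ follows from Lemma~\ref{l:really increasing}.
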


Our numerical experiments suggest that in fact the set~\eqref{e:spectrum} contains at least four elements, and that its smallest elements, besides~$\Fh(0)$ and~$\Fh(1)$, are given by the values of~$\Fh$ taken at the primitive roots of unity of orders~$6$ and~$10$.
See the summary below, Section~\ref{sec:numer-exper}, and the companion files~\cite{BMRan} for precisions.
Our second main result is that among the values of Faltings' height taken at roots of unity, these are the only ones that could belong to~\eqref{e:spectrum}, with the possible exception of the values of~$\Fh$ at the primitive roots of unity of orders~$14$, $15$ and~$22$.

\begin{theo}
\label{t:roots of unity}
Let~$n \ge 2$ be an integer different from~$6$, $10$, $14$, $15$ and~$22$, and let~$\zeta_n$ be a primitive root of unity of order~$n$.
Then~$\Fh(\zeta_n) > \Fmuess$.
\end{theo}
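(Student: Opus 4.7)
My plan for Theorem~\ref{t:roots of unity} proceeds in three stages. First, since $\zeta_n$ is an algebraic integer, any elliptic curve with $j$\nobreakdash-invariant $\zeta_n$ has potential good reduction at every finite place, so on a semistable model over a suitable finite extension $L \supseteq \Q(\zeta_n)$ the minimal discriminant is a unit. The non-archimedean contribution in~\eqref{defFaltings} vanishes, leaving
\[
\Fh(\zeta_n) \;=\; \frac{1}{12\,\varphi(n)} \sum_{\substack{1 \le k < n \\ \gcd(k,n)=1}} g_\infty(\tau_k),
\]
where $\tau_k$ is the unique point of the standard fundamental domain $F$ of $\SL_2(\z)$ satisfying $j(\tau_k) = \zeta_n^k$. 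Since $|\zeta_n^k| = 1$, every $\tau_k$ lies on the compact curve $C \coloneqq \{\,\tau \in F : |j(\tau)|=1\,\}$, which I would parametrize by $\theta \in \R/2\pi\z$ via $j(\tau(\theta)) = e^{i\theta}$, so that $\tau_k = \tau(2\pi k/n)$.

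Second, I would derive an explicit pointwise lower bound $g_\infty(\tau(\theta)) \ge \Psi(\theta)$ on the circle. The function $g_\infty \circ \tau$ is continuous, symmetric under $\theta \mapsto -\theta$ by complex conjugation, and attains its small value $12\Fh(1)$ at $\theta = 0$ (since $j(\tau(0)) = 1$). On any arc bounded away from $\theta = 0$, a compactness argument combined with the effective approximation to the hyperbolic Green function developed earlier in the paper would give a uniform strict lower bound of the shape $12\Fh(1) + \delta$ for an effective $\delta > 0$. Near $\theta = 0$, I would apply the univalent distortion estimates (one of the paper's main technical tools) to the inverse of $j$ on a disk around $j = 1$, deriving a local quadratic estimate of the form $g_\infty(\tau(\theta)) \ge 12\Fh(1) + c\theta^2 + O(\theta^3)$ with an effective $c > 0$.

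Finally, I would sum $\Psi$ over the primitive $n$\nobreakdash-th roots of unity and bookkeep the contributions. Separating the indices $k$ for which $2\pi k/n$ lies in a small neighborhood of $0 \pmod{2\pi}$ from the rest and using the two regimes of $\Psi$, one obtains an explicit lower bound for the average that exceeds $12\Fmuess$ whenever $\delta$ dominates the gap $12(\Fmuess - \Fh(1))$ (known effectively from the paper's upper bound on $\Fmuess$) and $n$ exceeds an effective threshold $N_0$. For the finitely many $n \le N_0$ outside the list $\{6,10,14,15,22\}$, I would compute $\Fh(\zeta_n)$ with certified numerical precision from the $q$\nobreakdash-expansion of $\Delta$ and verify the inequality directly against the rigorous bounds on $\Fmuess$. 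The main obstacle is the quantitative analysis near $\theta = 0$: the constants $c$ and $\delta$ and the size of the neighborhood must be sharp enough to keep $N_0$ within range of the certified finite check and tight enough that no non-excepted $n$ slips through. The five listed exceptions are precisely those for which the certified gap between the lower bound on $\Fh(\zeta_n)$ and the upper bound on $\Fmuess$ is too narrow to close by this method.
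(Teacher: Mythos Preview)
Your first stage is exactly right, and your instinct to approximate $g_\infty$ on the curve $|j|=1$ is the correct one. But the paper's argument is both simpler and sharper than your two-regime scheme, and the difference matters.

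The paper proves (Proposition~\ref{zetacontraw}) a single \emph{global} approximation on all of $S^1$:
\[
\Bigl|\ghyp(\zeta)-\bigl(\gamma_1-\tfrac{\Re(\zeta)}{13824}\bigr)\Bigr|\le 5\cdot10^{-7}.
\]
Averaging over the Galois orbit and using the Ramanujan-sum identity $\sum_{\zeta\in\cO(\zeta_n)}\zeta=\mu(n)$ gives a closed form
\[
\Fh(\zeta_n)=\frac{\gamma_1}{12}-\frac{1}{165888}\cdot\frac{\mu(n)}{\phi(n)}+O(5\cdot10^{-8}),
\]
and the exceptions are then read off from the purely arithmetic condition $\mu(n)=1$ and $\phi(n)\le 10$, with no case-by-case numerical verification needed. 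The upper bound on $\Fmuess$ comes from an equidistribution/Fekete--Szeg\H{o} argument (Proposition~\ref{p:upper bound}) applied to a translated circle.

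Your proposed two-regime lower bound --- a constant gain $\delta$ on an arc away from $\theta=0$, and a quadratic gain $c\theta^2$ near $\theta=0$ --- is unlikely to close the gap. The total oscillation of $\ghyp$ on $S^1$ is about $2/13824\approx 1.45\cdot10^{-4}$, while $12(\Fmuess-\Fh(1))\approx 7.2\cdot10^{-5}$ sits right in the middle of that range. A constant $\delta$ on any fixed arc, combined with only a nonnegative contribution from the complementary arc, averages to at most half the total oscillation, which is not enough to exceed $12\Fmuess$. To succeed you would effectively need the full linear approximation and the Ramanujan sum anyway; the decomposition into ``near'' and ``far'' throws away the cancellation that makes the argument work. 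Likewise, the five exceptions are not artifacts of a finite numerical check but are exactly the squarefree $n$ with $\mu(n)=1$ and $\phi(n)<12$; your sketch does not explain why precisely this list emerges.
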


The estimates used to prove Theorems~\ref{t:minima} and~\ref{t:roots of unity} easily yield the following.
\begin{coro}
\label{c:numerical}
We have
$$ 10^{-4} < \Fh(1) - \Fh(0) < \Fmuess - \Fh(0) < 2 \cdot 10^{-4}
\text{ and }
-0.748629  \leq \Fh(1) < \Fmuess \leq -0.748622, $$
and the set of values of~$\Fh$ is dense in the interval~$[-0.748622, \infty)$.

On the other hand, if~$\alpha$ is an algebraic number whose Faltings' height is less than or equal to~$\Fmuess$, then~$\alpha$ is either an algebraic integer or of degree greater than or equal to~$10520$. Moreover, if the degree of $\alpha$ is at most $10$, then $\alpha$ is an algebraic unit.
\end{coro}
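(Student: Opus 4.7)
The plan is to combine the numerical and analytic control of $\gH$ and of Faltings' height established in the proofs of Theorems~\ref{t:minima} and~\ref{t:roots of unity}.

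The first pair of inequalities reduces to computing $\Fh(0)$ and $\Fh(1)$ and bounding $\Fmuess$ to the displayed precision. The values $\Fh(0)$ and $\Fh(1)$ can be read off from~\eqref{defFaltings} applied to specific semistable representatives: the discriminant contribution is explicit in each case, and the Archimedean contribution reduces to $\gH$ evaluated at the corresponding point of the standard fundamental domain (for $j=0$, at $\tau=e^{i\pi/3}$; for $j=1$, at the unique point of the standard fundamental domain with $j(\tau)=1$). The sharp approximation of $\gH$ developed to prove Theorem~\ref{t:roots of unity} yields both heights to five decimal places. The strict inequality $\Fh(1) < \Fmuess$ of Theorem~\ref{t:minima}, combined with the numerical evaluation of $\Fh(1)$, gives the lower bound $\Fmuess \geq -0.748629$. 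The upper bound $\Fmuess \leq -0.748622$ will emerge from the proof of Theorem~\ref{t:minima}, which in particular exhibits an infinite family of algebraic numbers with Faltings height at most that threshold (the essential minimum is automatically bounded above by any such value). Subtracting the numerical value of $\Fh(0)$ produces the inequalities involving $10^{-4}$ and $2\cdot 10^{-4}$.

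For the density of $\{\Fh(\alpha) : \alpha \in \overline{\Q}\}$ in $[-0.748622, \infty) \supseteq [\Fmuess, \infty)$, I would use that for every $\theta > \Fmuess$ the set $\{\alpha : \Fh(\alpha) \leq \theta\}$ is infinite by definition of the essential minimum, together with the continuity of $\gH$ on $\H$ and the density of algebraic numbers in $\C$. Perturbing $\alpha$ at a single Archimedean place allows one to realize any target value in $[\Fmuess, \theta]$ as a Faltings height; letting $\theta \to \infty$ yields density in $[\Fmuess, \infty)$.

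For the degree statement, suppose $\alpha \in \overline{\Q}$ satisfies $\Fh(\alpha) \leq \Fmuess$ and let $L/\Q$ be a number field of degree $d$ over which $E_\alpha$ is semistable. If $\alpha$ is not an algebraic integer, then some finite place $v$ of $L$ satisfies $v(\alpha) < 0$; Tate's uniformization then forces multiplicative reduction at $v$ with $v(\Delta_{E_\alpha/L}) = -v(\alpha) \geq 1$, so $\log|N_{L/\Q}(\Delta_{E_\alpha/L})| \geq \log 2$. Combined with the universal lower bound $-C := \min_{\H}\gH$ coming from the same approximation of $\gH$ used above, the definition~\eqref{defFaltings} yields
\[
\Fh(\alpha) \geq \frac{\log 2}{12 d} - \frac{C}{12}.
\]
Together with $\Fh(\alpha) \leq \Fmuess \leq -0.748622$ this rearranges to $d \geq \log 2 / (C - 12\cdot 0.748622)$, and inserting the explicit numerical value of $C$ gives $d \geq 10520$. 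The refined statement for $d \leq 10$ follows by noting that a non-unit algebraic integer $\alpha$ of degree at most $10$ satisfies $\prod_{\sigma} |\sigma(\alpha)| = |N(\alpha)| \geq 2$, so at least one Archimedean conjugate has $|\sigma(\alpha)| \geq 2^{1/10}$; this forces the corresponding $\tau_\sigma$ to lie sufficiently far from the minimum of $\gH$ to enlarge the Archimedean contribution and rule out $\Fh(\alpha) \leq \Fmuess$, at which point a finite case analysis (made effective by Northcott's property for $\Fh$) completes the argument.

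The principal obstacle is the upper bound $\Fmuess \leq -0.748622$: the remaining assertions reduce to numerical evaluations or standard principles, whereas this sharp threshold requires constructing an explicit infinite family of algebraic numbers whose Faltings heights accumulate close to and below the essential minimum, which in turn depends on the fine analytic control of $\gH$ underlying Theorem~\ref{t:minima}.
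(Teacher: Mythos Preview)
Your proposal has several genuine gaps.

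\textbf{The upper bound $\Fmuess \le -0.748622$.} You say this ``will emerge from the proof of Theorem~\ref{t:minima}.'' It does not: Theorem~\ref{t:minima} supplies only \emph{lower} bounds for~$\Fmuess$. The upper bound is obtained via a Fekete--Szeg\H{o} argument (Proposition~\ref{p:upper bound}): one shows that for any compact $K\subset\C$ of logarithmic capacity~$1$ invariant under complex conjugation, there exist algebraic integers whose Galois orbits equidistribute toward the equilibrium measure $\rho_K$, so $\Fmuess \le \tfrac{1}{12}\int \ghyp\,\dd\rho_K$. The paper takes~$K$ to be the circle of radius~$1$ centered at $a=0.205$ and evaluates the integral explicitly.

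\textbf{Density.} Your perturbation argument (``perturbing~$\alpha$ at a single Archimedean place'') does not make sense: the Archimedean embeddings of an algebraic number are determined by it and cannot be moved independently. The paper instead varies the center~$a$ in the family $C_a$ of translated circles; the map $a\mapsto \tfrac{1}{12}\int\ghyp\,\dd\rho_{C_a}$ is continuous with range containing $[-0.748622,\infty)$, and each value is an upper bound for the heights of an infinite family of algebraic integers.

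\textbf{The degree bound.} Your inequality $\Fh(\alpha)\ge \frac{\log 2}{12d}+\Fh(0)$ is correct but far too weak: with $\Fmuess-\Fh(0)\approx 1.3\cdot 10^{-4}$ it yields only $d\gtrsim 440$, not $10520$. The paper uses the penalized Green function $g_1(\zeta)=\ghyp(\zeta)-\partial_x\ghyp(1)\cdot\log|\zeta|$ of Proposition~\ref{p:minima} in place of $\ghyp$. This replaces $\Fh(0)$ by $\Fh(1)$ as the baseline and introduces a factor $1-\partial_x\ghyp(1)$ (Lemma~\ref{l:almost integer}); since $\Fmuess-\Fh(1)$ is about twenty times smaller than $\Fmuess-\Fh(0)$, this is what produces the bound $10520$.

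\textbf{The algebraic unit statement.} Having $|\sigma(\alpha)|\ge 2^{1/10}\approx 1.07$ does not move $\tau_\sigma$ appreciably away from $e^{\pi i/3}$, so your sketch does not close. The paper argues via the second inequality of Lemma~\ref{l:almost integer}, bounding $\tfrac{1}{d}\log|b|$ (with $b$ the constant coefficient of the minimal polynomial) by $12(\Fmuess-\Fh(1))/\partial_x\ghyp(1)$; for $d\le 10$ this forces $|b|<2$, hence $|b|=1$.
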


We did not try to get the best possible numerical estimates from the method we are using.
We opted for weaker numerical estimates that are easier to verify.

The fact that the minimum value of~$\Fh$ is
\begin{equation}
  \label{gmin}
\Fh(0)
=
\frac{1}{12} \gH \left( e^{\pi i / 3} \right)
=
-\frac{1}{2} \cdot \log \left( \frac{3}{(2\pi)^{3}} \Gamma \left( \frac{1}{3} \right)^{6}  \right)
=
-0.748752485503338...,
\end{equation}
was observed by Deligne in~\cite[p.~$29$]{Del85}.
The inequality $\Fh(0) < \Fmuess$ has been observed independently by L{\"o}brich \cite{Lobrich}, showing that $\Fmuess - \Fh(0) \geq 4.601 \cdot 10^{-18}$.
In~\cite{Zagier93}, Zagier studied a height function for which the set
corresponding to~\eqref{e:spectrum} also contains at least two
points. In \cite{Doc01} and \cite{OtroDoc01}, Doche continued the
study of such height function and determined an  upper bound and a
computer assisted lower bound for  the corresponding essential
minimum.

The following is a summary of what we have found in our numerical experiments, which have motivated and guided our results:
\begin{itemize}
\item
\textit{\textsf{First four minima}}:
\begin{displaymath}
\Fh(0)=-0.74875248 \ldots,
\Fh(1)=-0.74862817 \ldots,
\end{displaymath}
\begin{displaymath}
\Fh(\rho)=-0.74862517 \ldots,
\Fh(\xi)=-0.74862366 \ldots,
\end{displaymath}
where $\rho$ is a primitive root of unity of order~$6$, a root of the polynomial $z^2-z+1$, and~$\xi$ is a primitive root of unity of order~$10$, a root of $z^4-z^3+z^2-z+1$.
\item
\textit{\textsf{Next known value}}: $-0.74862330 \ldots$, taken at the roots of the polynomial
$$ z^8 - 2z^7 + 2z^6 - z^5 + z^4 - z^3 + z^2 - z + 1. $$
\item
\textit{\textsf{Bounds for the essential minimum:}} $-0.74862345 \le \Fmuess \le -0.74862278$.
\item 
\textit{\textsf{Density interval}}: The values of~$\Fh$ are dense in the interval~$[-0.74862278, 
\infty)$. 
\end{itemize}
See Section~\ref{sec:numer-exper} and the companion files~\cite{BMRan} for further details.
Note in particular that our numerical experiments locate the essential minimum~$\Fmuess$ in an interval of length smaller than~$10^{-6}$.
Furthermore, $\Fh$ takes exactly four values to the left of this interval and the values of~$\Fh$ are dense to the right of this interval.

We remark that only the second chain of inequalities in Corollary~\ref{c:numerical} depends on the chosen normalization of Faltings' height.  

The set of Faltings' heights of elliptic curves that are not necessarily semistable is a dense subset of $[\Fh(0), \infty)$, so by Theorem~\ref{t:minima} it is strictly larger than the set of values of Faltings' stable height.
Actually, even the set
$$ \{ \Fh(E/K) : K \text{ is a number field and $E/K$ is an elliptic curve such that~$j(E) = 0$} \} $$
is dense in $[\Fh(0), \infty)$.
This follows from the fact that the set of prime numbers~$p$ satisfying $p \equiv 1 \mod 9$ is infinite, and from the fact that for every such~$p$ and every integer $\ell \ge 1$ there is a number field~$K$ and an elliptic curve~$E/K$ such that~$j(E) = 0$ and $h_F(E/K) = h_F(0) + \log p/(6\ell)$, see the proof of Theorem 1.3 in \cite{Lobrich}.
% The behavior of $h_F$ changes if we drop the semistability assumption. The set $$ \{ h_F(E/K) : K \textrm{ is a number field and } E/K \textrm{ is an elliptic curve }\}$$ is dense in $[h_F(0), \infty]$. Indeed, a construction of L\"obrich provides,  for every integer $n\geq 1$ and prime number $p\equiv (-1)^n \mod 9$, an elliptic curve $E/K$ such that $j(E)=0$ and $h_F(E/K) = \frac{\log p}{3n}+h_F(0)$ (see the proof of Theorem 1.3 in \cite{Lobrich}).  Using the prime number theorem in arithmetic progressions, it can be shown that the set of numbers of the form $\log p/(3n)$ as above, is dense in $[0,\infty]$.

We now proceed to explain the main ingredients of the proofs of Theorems~\ref{t:minima} and~\ref{t:roots of unity}, and simultaneously explain how the paper is organized.
Our method is based on the interpretation of~$\Fh$ as an Arakelov-theoretic height on the modular curve of level one, induced by the line bundle~$M_{12}$ of weight~$12$ modular forms, together with the Petersson metric~$\npet{\cdot}$.
The height~$\Fh$ is computed by choosing a section of~$M_{12}$. 
In Section~\ref{Modular ingredients} we review the Arakelov-theoretic interpretation of Faltings' height.
We also collect the values at~$e^{\pi i/3}$ of the classical Eisenstein series of weight~$2$, $4$ and~$6$ and some of their derivatives, and compute~$j'''(e^{\pi i/3})$ in terms of those values.

The proof of Theorem~\ref{t:minima} is based on the following ``minimax'' procedure.
Let~$s$ be a nonzero section of~$M_{12}$.
Then, for every $\alpha\in \overline{\Q}$ outside of the  set $|\dv(s)|=\left\{ \alpha : s(\alpha) \in \{ 0, \infty\} \right\}$, we have $\Fh(\alpha) \geq \inf \left(-\log \npet{s} \right)$. Since $|\dv(s)|$ is a finite set, this yields the lower bound
\begin{equation}\label{cota general}
\Fmuess \geq \inf \left( -\log \npet{s} \right),
\end{equation}
\emph{cf}. Proposition~\ref{prop:1}.
Hence, to find a lower bound of~$\Fmuess$ one is led to search for a section~$s$ maximizing the right-hand side of~\eqref{cota general}.

For instance, the choice $s=\Delta$ yields the lower bound $\Fmuess\geq \Fh(0)$, considering that $j \left( e^{\pi i/3} \right) = 0$ is an integer and that $\gH=-\log \npet{\Delta}$ reaches its minimum at $e^{\pi i/3}$.
Since~$0$ is algebraic, a natural idea to improve this lower bound is to ``penalize'' the value~$j=0$ and look for a section of the form~$s = j^a \cdot \Delta$, for some~$a > 0$.
The technical heart of the proof of Theorem~\ref{t:minima} is to show that an appropriate choice of~$a$ yields the lower bound~$\Fh(1) \le \Fmuess$.
This is the content of the following proposition.

\begin{propalph}
\label{p:minima}
Let $\ghyp \colon \C \rightarrow \R$ be the function defined by 
\begin{equation}\label{def ghyp}
\gH = \ghyp \circ j.
\end{equation}
Then we have that $0<\partial_x \ghyp(1) <1$ and that the function~$g_1 \colon \C \setminus \{ 0 \} \to \R$ defined by
$$ g_1(\zeta) \coloneqq \ghyp(\zeta) - \partial_x \ghyp(1) \cdot \log | \zeta |, $$
attains its minimum value at, and only at, $\zeta = 1$.
\end{propalph}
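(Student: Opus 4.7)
The plan is to lift the problem to the upper half-plane via $\zeta = j(\tau)$ and study the $\SL_2(\Z)$-invariant function
\[
F(\tau) \coloneqq g_\infty(\tau) - c\log|j(\tau)|, \qquad c \coloneqq \partial_x \ghyp(1),
\]
which, via the defining relation $g_\infty = \ghyp \circ j$, pushes down to $g_1$ on $\C\setminus\{0\}$. The symmetry $\ghyp(\bar\zeta) = \ghyp(\zeta)$ (forced by the real Fourier coefficients of $\Delta$ and $j$) immediately gives $\partial_y \ghyp(1) = 0$. Using $\Delta'/\Delta = 2\pi i\, E_2$ and $\partial_\tau \log \Im\tau = -i/(2\Im\tau)$, one obtains $\partial_\tau g_\infty(\tau) = -\pi i\, E_2^*(\tau)$, where $E_2^*(\tau) \coloneqq E_2(\tau) - 3/(\pi \Im\tau)$; combining with the classical identity $j'(\tau) = -2\pi i\, E_4^2 E_6/\Delta$ yields the key formula
\[
\partial_\zeta \ghyp(\zeta) \;=\; \frac{E_2^*(\tau)\, \Delta(\tau)}{2\, E_4(\tau)^2\, E_6(\tau)}, \qquad \zeta = j(\tau),
\]
and multiplying by $j = E_4^3/\Delta$ gives $\zeta\, \partial_\zeta \ghyp(\zeta) = E_2^*(\tau) E_4(\tau)/(2 E_6(\tau))$.

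To verify $0 < c < 1$, I observe that the preimage of the positive real $\zeta$-axis in the standard fundamental domain is the arc $\{|\tau|=1\}$ from $e^{\pi i/3}$ (where $\zeta = 0$) to $i$ (where $\zeta = 1728$), followed by the imaginary ray from $i$ to the cusp. Let $\tau_1$ be the unique point on the arc with $j(\tau_1) = 1$. Then the displayed formula expresses $c$ in terms of $E_2^*, E_4, E_6$ at $\tau_1$; I would deduce $c > 0$ from this together with the tabulated values of Section~\ref{Modular ingredients} and the observation that $\tau\, E_2^*(\tau)$, $\tau^2 E_4(\tau)$ and $\tau^3 E_6(\tau)$ are all real on the arc (a consequence of the transformation laws under $\tau \mapsto -1/\tau$ combined with complex conjugation of Fourier coefficients). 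The upper bound $c < 1$ is more delicate; I expect it to follow from the sharp approximation of $g_\infty$ developed elsewhere in the paper via distortion theorems for univalent functions, applied to control $\partial_\zeta \ghyp(1)$ quantitatively.

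For the uniqueness of the minimum: $g_1$ is proper on $\C\setminus\{0\}$, since $-c\log|\zeta| \to +\infty$ near $\zeta = 0$ and the $q$-expansion of $\Delta$ gives $\ghyp(\zeta) = \log|\zeta| - 6\log\log|\zeta| + O(1)$ as $|\zeta| \to \infty$, so $g_1(\zeta) \sim (1-c)\log|\zeta| \to +\infty$. Hence $g_1$ attains its minimum, and a critical point of $g_1$ on $\C\setminus\{0\}$ corresponds to a solution in $\H\setminus\SL_2(\Z)\cdot\{e^{\pi i /3}\}$ of the modular equation
\[
E_2^*(\tau)\, E_4(\tau) = c\, E_6(\tau).
\]
Using the symmetry $g_1(\bar\zeta) = g_1(\zeta)$ and the strict subharmonicity of $F$ on $\H\setminus\SL_2(\Z)\cdot\{e^{\pi i/3}\}$ (since $4\partial_\tau\partial_{\bar\tau} g_\infty = 6/(\Im\tau)^2 > 0$ while $\log|j|$ is harmonic off the zeros of $j$), a Morse-theoretic count on the punctured modular curve $X(1)\setminus\{\infty, [e^{\pi i/3}]\}$ reduces the problem to showing that the only critical point of $g_1$ on the positive real axis is $\zeta = 1$. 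On that axis the equation becomes $\zeta\, \partial_x \ghyp(\zeta) = c$, and I would verify that $\zeta \mapsto \zeta\, \partial_x \ghyp(\zeta)$ is strictly increasing on $(0,1728)$ and that its values on $(1728,\infty)$ avoid $c$, making $\zeta = 1$ the unique real critical point.

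The main obstacle is the uniqueness claim. Both excluding complex critical points via the subharmonicity-plus-Morse argument and establishing the monotonicity of $\zeta \mapsto \zeta\, \partial_x \ghyp(\zeta)$ on the positive real axis rely on quantitative control of $g_\infty$; I expect the sharp approximation developed via distortion theorems for univalent functions---highlighted in the introduction as a central tool---to be essential on both counts, together with the explicit values of $E_2^*, E_4, E_6$ at the elliptic points $i$ and $e^{\pi i/3}$ collected in Section~\ref{Modular ingredients}.
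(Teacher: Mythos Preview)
Your approach differs substantially from the paper's. The paper does not use the modular critical-point equation $E_2^* E_4 = c\, E_6$ or any Morse theory. Instead, after proving the numerical estimate $\frac{1}{1032}\le\partial_x\ghyp(1)\le\frac{1}{1025}$ via the distortion and approximation machinery of Sections~\ref{Koebe}--\ref{s:approximation} (which you correctly anticipate is needed for $c<1$), it establishes the inequality $g_1(\zeta)\ge g_1(1)$ directly by a three-region analysis on $\tau\in T$ with $j(\tau)=\zeta$. For $\Im\tau\ge 1$ and for $\frac{1}{\pi}\log 19\le\Im\tau\le 1$ the inequality follows from crude $q$-expansion bounds combined with the numerical value of $c$. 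For the delicate case $\Im\tau\le\frac{1}{\pi}\log 19$ the paper passes to the disk coordinate $w=\psi^{-1}(\tau)$ and proves two convexity statements: the restriction of $g_1\circ\whf$ to the real interval $\bigl(0,1-\tfrac{\pi}{2\sqrt3}\bigr]$ is strictly convex with its unique minimum at the preimage of $\zeta=1$, and $g_1\circ\whf(w)\ge g_1\circ\whf(|w|)$ with equality iff $w^3=|w|^3$. Both are obtained from explicit second-derivative bounds supplied by Sections~\ref{Koebe}--\ref{s:approximation}.

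Your Morse-theoretic reduction has a genuine gap. Strict subharmonicity of $F$ rules out local maxima but does not confine critical points to the real $\zeta$-axis, so the claimed reduction ``to showing that the only critical point of $g_1$ on the positive real axis is $\zeta=1$'' is not justified. In fact the restriction of $g_1$ to the negative real axis also tends to $+\infty$ at both ends and hence has a critical point there, which by the symmetry $g_1(\bar\zeta)=g_1(\zeta)$ is a critical point of $g_1$ on $\C\setminus\{0\}$; so there are at least two real critical points. Even with a correct count, Morse theory constrains only the alternating sum of indices, and off-axis conjugate pairs consisting of one minimum and one saddle contribute zero, so additional local minima cannot be excluded this way---and even if they were, comparing the \emph{values} of $g_1$ at the competing local minima would still require quantitative input of the sort the paper supplies. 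The monotonicity of $\zeta\mapsto\zeta\,\partial_x\ghyp(\zeta)$ on $(0,1728)$ and the exclusion of critical points on $(1728,\infty)$ are likewise left as assertions; once the sharp approximations of Sections~\ref{Koebe}--\ref{s:approximation} are in hand to prove them, the paper's direct convexity argument in the $w$-coordinate is both shorter and complete.
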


See Remark~\ref{multiplicador} for an explanation of the choice~$a = \partial_x \ghyp(1)$. 
Once Proposition~\ref{p:minima} is established, an infinitesimal version of the argument above yields the strict inequality~$\Fh(1) < \Fmuess$. 
In Section~\ref{reduccion extremal} we show how to deduce Theorem~\ref{t:minima} from Proposition~\ref{p:minima}.

Our numerical experiments suggest that there are real numbers~$a_1 > 0$ and~$a_2 > 0$ such that the choice $s=j^{a_1}(j-1)^{a_2}\Delta$ leads to the more precise lower bound $\Fh(e^{\pi i/3}) \le \Fmuess$, and ultimately to the strict inequality~$\Fh(e^{\pi i/3}) < \Fmuess$.
It is possible to prove this rigorously using the methods developed in this paper, but we do not do so here in order to keep this article at a reasonable length.
We discuss further numerical experiments in Section~\ref{sec:numer-exper} and in the companion files~\cite{BMRan}.

The algorithm  described above, which is applied here to Faltings' height, is valid for a general height (\emph{c.f.} section \ref{algoritmo inferior} for a precise general formulation). In fact, this method was used in the aforementioned papers of Doche and Zagier and can be traced back to results on Mahler measures by Smyth \cite{Sm81}. See  \cite[Theorem~$3.7$]{BPS} for an application in the context of toric heights.

Another possible route to estimate~$\Fmuess$ from below is to adapt to~$\Fh$ the bounds on successive minima given by Zhang in~\cite{Zhang}.
However, this approach yields a weaker lower bound of~$\Fmuess$ than those given by Theorem~\ref{t:minima}.
See Section~\ref{modular} for further details.

One of the main ingredients in the proofs of Theorem~\ref{t:roots of unity} and Proposition~\ref{p:minima} is an approximation of~$\ghyp$ and of its first and second derivatives, on a suitable neighborhood of the unit disk.
Roughly speaking, we show that~$\ghyp$ is well approximated by the sum of a linear function and of an explicit function having a conic singularity at~$\zeta = 0$. 
The following is a sample estimate in this direction, which is used in the proof of Theorem~\ref{t:roots of unity}.
\begin{propalph}
\label{zetacontraw}
Letting
$$ \gamma_0
\coloneqq
\frac{\sqrt{3}}{\pi} \Gamma\left( \frac{1}{3} \right)^2
\text{ and }
\gamma_1
\coloneqq
3 \log(192) - 6 \log \left( \gamma_0^3 - \gamma_0^{-3} \right), $$
for every~$\zeta$ in~$S^1$ we have 
$$\left|\ghyp(\zeta) - \left(\gamma_1 - \frac{\Re(\zeta)}{13824}\right) \right| \leq 5\cdot 10^{-7}. $$
\end{propalph}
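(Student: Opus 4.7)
The plan is to substitute an explicit local parametrization of $\tau(\zeta)\in\H$ into the Fourier expansion of $g_\infty$ and bound the resulting error. Setting $\tau_0 := e^{i\pi/3}$ and $A := \tfrac{1}{6}j'''(\tau_0)$, the formulas for $E_4, E_6, \Delta$ at $\tau_0$ (Section~\ref{Modular ingredients}) yield $A = -512\pi^3 i\,E_6(\tau_0)$, purely imaginary, with $|A|^{-1/3} = \sqrt{3}\gamma_0^{-3}\approx 0.028$. Fixing a branch and setting $w := (\zeta/A)^{1/3}$, the map $w\mapsto\tau = F(w)$ inverts $\tau\mapsto w$; the order-$3$ stabilizer $\langle T\colon\tau\mapsto 1-1/\tau\rangle$ of $\tau_0$ forces the Taylor series of $j$ at $\tau_0$ to be a power series in $(\tau-\tau_0)^3$, hence $F(w) = \tau_0 + w + O(w^4)$ with no $w^2$ or $w^3$ terms. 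Since the only critical values of $j$ are $0$ and $1728$, the nontrivial one lies at $|w|=(1728/|A|)^{1/3}\approx 2.1$, so $F$ extends univalently to a disk of radius close to $2$; applying the classical distortion theorems for schlicht functions to $w\mapsto(F(w)-\tau_0)/w$ yields explicit bounds $|F(w)-\tau_0-w|\le C|w|^4$ on $|w|\le 1/|A|^{1/3}$, together with similar estimates for the derivatives.

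Next I substitute $\tau = F(w)$ into
$$g_\infty(\tau) = -6\log(4\pi\Im\tau)+2\pi\Im\tau+24\sum_{k\ge 1}\frac{\sigma_1(k)}{k}\Re(q^k),\quad q = e^{2\pi i\tau},$$
and expand every piece in $w, \bar w$. Since $|q|\le |q_0|\,e^{2\pi|w|}\le 0.006$, the $q$-series terms with $k\ge 4$ are uniformly below $10^{-8}$, so only $k=1,2,3$ need to be tracked explicitly. The $\SL_2(\Z)$-invariance of $g_\infty$ forces only monomials $w^a\bar w^b$ with $a-b\equiv 0\pmod 3$ to appear, which ensures the resulting Fourier series in $\zeta = e^{i\theta}$ comprises integer modes only and lets me match mode by mode against $\gamma_1 - \Re(\zeta)/13824$.

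The first Fourier mode receives its leading contribution from $(a,b)=(3,0)$ and $(0,3)$: computing the $\sigma^3$-coefficient $c_{3,0}$ of the Taylor expansion of $g_\infty$ at $\tau_0$ via $(\log\Delta)'''(\tau_0) = 2\pi iE_2''(\tau_0)$ together with the identities $E_2(\tau_0)=2\sqrt{3}/\pi$ and $E_4(\tau_0)=0$ gives $c_{3,0} = i\pi^3 E_6(\tau_0)/54$. The crucial cancellation $c_{3,0}/A = -1/(54\cdot 512) = -1/27648$ (independent of $E_6(\tau_0)$) then yields $c_{3,0}\sigma^3 + \overline{c_{3,0}}\bar\sigma^3 = -\Re(\zeta)/13824$ to leading order. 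The zeroth Fourier mode assembles to $\gamma_1$ after summing all contributions from monomials $w^a\bar w^b$ whose leading Fourier component is constant; combined with Deligne's formula $g_\infty(\tau_0) = 3\log(192)-18\log\gamma_0$ and the identity $|A|^{-2/3} = 3\gamma_0^{-6}$, this yields the compact form $\gamma_1 = 3\log(192)-6\log(\gamma_0^3-\gamma_0^{-3})$.

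The main obstacle is the tail estimate. Since $|w|^4\approx 6\cdot 10^{-7}$ is already on the order of the target error $5\cdot 10^{-7}$, the contributions from all $w^a\bar w^b$ with $a+b\ge 4$ — arising from the $O(w^4)$ correction to $F$, from higher-order terms in the log and exponential expansions, and from the $w$-expansions of $q^k$ for $k=1,2,3$ — must be tracked with explicit constants. The strategy is to enumerate the finitely many monomials that could contribute above threshold (using the distortion bound on $F$ to limit them), compute them explicitly when unavoidable, and majorize the rest by a geometric-type series obtained from Bieberbach's estimate applied to the normalized schlicht function $(F(w)-\tau_0)/w$. This delicate constant-tracking, rather than the conceptual substitution, is what makes the $5\cdot 10^{-7}$ bound nontrivial.
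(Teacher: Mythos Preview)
Your proposal contains a genuine error that invalidates the main estimate. You claim that the order-$3$ stabilizer of $\tau_0=e^{i\pi/3}$ forces the Taylor series of $j$ at $\tau_0$ to be a power series in $(\tau-\tau_0)^3$, and hence that $F(w)=\tau_0+w+O(w^4)$. This is false: the stabilizer acts on $\tau$ by the M\"obius transformation $T(\tau)=1-1/\tau$, not by rotation of $\tau-\tau_0$. Concretely, $T''(\tau_0)=-2/\tau_0^3=2\neq 0$, so $T$ is not linear in $\tau-\tau_0$. One can compute (for instance by using the paper's coordinate $\psi$, in which the stabilizer \emph{does} act linearly and $\whf^{(4)}(0)=0$) that
\[
j^{(4)}(\tau_0)=4i\sqrt{3}\,j'''(\tau_0)\neq 0,
\]
so the $w^2$-coefficient of $F(w)-\tau_0$ equals $-j^{(4)}(\tau_0)/(12\,j'''(\tau_0))=-i/\sqrt{3}$. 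With $|w|=|A|^{-1/3}\approx 0.028$ this gives $|a_2w^2|\approx 4.5\times 10^{-4}$, about three orders of magnitude larger than the target $5\times 10^{-7}$. Your distortion bound $|F(w)-\tau_0-w|\le C|w|^4$ therefore cannot hold, and substituting $\tau\approx\tau_0+w$ into the Fourier expansion of $g_\infty$ produces errors far too large. (Your numerical value $\approx 2.1$ for the univalence radius is also off; the correct value is $12\,|A|^{-1/3}\approx 0.335$.)

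What \emph{is} true is that the composite $g_\infty\circ F$ is invariant under $w\mapsto e^{2\pi i/3}w$, since $F(e^{2\pi i/3}w)=T(F(w))$; this is the symmetry you invoke later. But exploiting it requires either expanding $g_\infty\circ F$ directly and bounding its higher derivatives as a whole, or passing to a coordinate in which the stabilizer acts by an exact rotation. The paper does the latter: it uses the M\"obius chart $\psi\colon\D\to\H$, $\psi(w)=(\bar\rho w+\rho)/(w+1)$, for which the stabilizer becomes $w\mapsto -\rho w$ exactly. This yields the clean decomposition $g_\D(w)=-\log(1728\pi^6)-6\log(1-|w|^2)-\log|\whh(w)|$ with $\whh(w)=h(w^3)$ holomorphic, so that only a sixth-derivative bound on $\log\whh$ and a Koebe-type distortion estimate on $f=\whf\circ(\,\cdot\,)^{1/3}$ are needed to reach $5\times 10^{-7}$. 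Your approach, as written, misses precisely this linearization step.
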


The approximation of~$\ghyp$ is achieved in two independent steps.
The first step is an approximation of the inverse function of~$j$ on a suitable neighborhood of the unit disk.
This is done in Section~\ref{Koebe}.
To this end we use the Koebe distortion theorem and several of its variants.
Loosely speaking, this result gives a quantitative estimate on how well a given univalent function (that is, an injective and holomorphic function) is approximated by its linear part at a given point.
We apply this theorem to the function induced by~$j$ on the quotient of a neighborhood of~$e^{\pi i / 3}$ in~$\H$ by the stabilizer of this point in~$\SL_2(\Z)$, which is of order three.
The computation of~$j'''(e^{\pi i/3})$, alluded above, is important in the determination of the constants in the resulting approximations.

The second step is an approximation of the function~$g_\infty$ and of its first and second derivatives, on a suitable neighborhood of~$e^{\pi i /3}$, and on a suitable coordinate.
This is done in Section~\ref{s:approximation}.

The proofs of Theorem~\ref{t:roots of unity}, Corollary~\ref{c:numerical} and Proposition~\ref{zetacontraw} are given in Section~\ref{numerical}.
After giving the proof of Proposition~\ref{zetacontraw} in Section~\ref{ss:roots of unity}, we estimate the values of~$\Fh$ at the roots of unity (Corollary~\ref{estimaciones especiales}).

Besides the approximation of~$\ghyp$ mentioned above, the main ingredient in the proof of Theorem~\ref{t:roots of unity} and Corollary~\ref{c:numerical} is a general method to estimate the essential minimum from above, which is based on the classical Fekete-Szeg{\"o} theorem and an equidistribution result from~\cite{BPRS}. See section \ref{ss:upper bound} for the precise formulation of the method.  Here, we apply it to Faltings' height, but  is also valid for other heights. 

Since on a relatively large neighborhood of the unit disk the function~$\ghyp$ is very close to a function having radial symmetry, the integral of~$\frac{1}{12} \ghyp$ against the Haar measure of~$S^1$ gives a very good upper bound of~$\Fmuess$.
However, this estimate is not sufficient for the proof of Theorem~\ref{t:roots of unity}.
We use instead a better upper bound that is obtained by integarting~$\frac{1}{12} \ghyp$ against a certain translate of the Haar measure of~$S^1$.
This upper bound and the proofs of Theorem~\ref{t:roots of unity} and Corollary~\ref{c:numerical} are given in Section~\ref{ss:upper bound}.

The proof of Proposition~\ref{p:minima} is given in Section~\ref{key}.
The main part of the proof is divided in three cases, according to the proximity of~$\zeta$ to the unit disk.
By far, the most difficult case is the case where~$\zeta$ is close the unit disk.
To deal with this case we establish some convexity properties of~$\ghyp$ in this region, using the results of Sections~\ref{Koebe} and~\ref{s:approximation}.

Finally, in Section~\ref{sec:numer-exper} we discuss numerical experiments around the determination of further isolated values of $\Fh$ and  lower bounds of $\Fmuess$.  See also \cite{BMRan} for a detailed presentation of these
experiments.

\subsection*{Acknowledgements}
We thank Yuri Bilu, Gerard Freixas-i-Montplet, Sebasti\'an Herrero and Mart\'in Sombra for useful comments and references. We also thank Philipp Habegger for pointing us to \cite{Doc01}, after the first version of this paper was completed. Numerical experiments were made in PARI and SAGE.
 
 \section{Modular ingredients}\label{Modular ingredients}

\subsection{Arakelov-theoretic interpretation of Faltings' height}\label{modular}

Let $\X\coloneqq \p_{\z}$ and consider the section $s_\infty \colon \Spec (\z) \rightarrow \X$ given by $[1:0]$. We denote by~$D_\infty$ the divisor induced by this section and consider the line bundle $\LL\coloneqq O_{\X}(D_\infty)$.
On the other hand, consider the modular curve $X\coloneqq \left(
  \SL_2(\z) \backslash \h \right) \cup \{\infty\}$ and the
$j$\nobreakdash-invariant $j \colon \H \to \C$, normalized by 
$$ j(\tau) 
\coloneqq
\frac{1}{q} + 744 + \cdots, \quad q=e^{2\pi i \tau}. $$
Every elliptic curve $E$ over $\C$ has a Weierstrass equation 
  of the form 
  \begin{equation}\label{eq:18}
    y^2= x^3-27c_{4} x-54 c_{6}
  \end{equation}
with the notation of \cite[pp.~46--48]{Silver}.
Then $c_{4}$ can be seen as a modular form of weight~$4$. The modular
discriminant $\Delta $ is a modular form of weight~$12$ and we have
the relation 
\begin{equation}
  \label{eq:4}
  j=c_{4}^3/\Delta.
\end{equation} 

There is a holomorphic bijection
$$ \iota \colon X \rightarrow \X(\ce)=\p(\ce) $$
given by $\iota (\tau )=[c_{4}(\tau )^{3}:\Delta (\tau )]$ and~$\iota(\infty) = [1 : 0]$.
This
bijection identifies $j$ with the absolute coordinate of $\p(\C)$.
Moreover, this choice of coordinates 
gives an isomorphism between the line bundle~$\LL(\ce)$ and the line
bundle $M_{12}\left(\SL_2(\z)\right) \rightarrow X$ of weight~$12$
modular forms of level one, that  identifies $\Delta $ with a canonical section of the former. 
Indeed, at the level of  global sections, we have an isomorphism 
\begin{displaymath}
w\colon H^0 \left( \X(\ce), \LL(\C) \right) =\{f\mid
\dv(f)+\infty\ge 0\}\longrightarrow M_{12}\left(\SL_2(\z)\right),
\end{displaymath}
given by
\begin{equation} \label{modforms}
w(f)\coloneqq  \Delta \cdot \iota^* f.
\end{equation}

We recall that~$M_{12}$ carries the \emph{Petersson metric}, defined
for a section~$g$ in~$M_{12}(\SL_2(\Z))$ by  
$$ \npet{g} (\tau)
\coloneqq
\left(4 \pi \Im(\tau)\right)^6 |g(\tau)|. $$
We endow~$\LL(\C)$ with the metric for which~\eqref{modforms} becomes
an isometry, which we also denote by~$\npet{\cdot}$.
Let $K$ be a number field and denote by $K^0$
(resp. $K^\infty$) the set of non-archimedean (resp. archimedean)
places of $K$. For~$v$ in~$K^0$  we denote by $\ncanv{\cdot}$  the
canonical metric on $\LL \otimes K_v$. It is defined as follows.
Let $\zeta=[\zeta_0: \zeta_1]$ be the standard homogeneous
coordinates of $\p$.  Any nonzero section~$s$ of~$\LL$ can be identified canonically with
a linear form $\ell_{s}(\zeta_0,\zeta_1)$.  If $\zeta =[\zeta_0:
\zeta_1]\notin \dv (s)$,  then
$$\ncanv{s(\zeta)} \coloneqq \frac{|\ell_{s}(\zeta_0,
  \zeta_1)|_v}{\max\{ |\zeta_0|_v, |\zeta_1|_v\}}.$$
On every place~$v$ in~$K^\infty$ we denote by $\npetv{\cdot}$ the
Petersson metric on $\LL \otimes K_v$. Putting together all the
metrics, $\LL\otimes K$
becomes a metrized line bundle that we denote $\LLL$. 

Although the metrized line bundle~$\LLL$ is singular at~$[1:0]$, it
does induce a height function~$\height{\LLL}$ which is defined at
every point~$\zeta$ in~$\X(\overline{\q})$ different from~$[1 : 0]$. 
To define~$\height{\LLL}(\zeta)$, let $K$ be a number field that
contains $\zeta$.
Then, the height~$\height{\LLL}(\zeta)$ of~$\zeta$ is defined as
\begin{equation}\label{eq:9}
  \height{\LLL} (\zeta) \coloneqq \frac{ \widehat{\deg}(\Lbar|_{D_\zeta})}{[K:\q]}=
  \frac{1}{[K:\q]}\left( \sum_{v \in K^0 }-\log \ncanv{s(\zeta)} + 
    \sum_{v \in K^\infty } -\log \npetv{s(\zeta)}  \right). 
\end{equation}

\begin{lemm}\label{magia}
Let $E/K$ be an elliptic curve and let $L/K$ be
a finite extension such that $E_L\coloneqq E\otimes L$ is
semistable. Then, $\Fh(E_L/L) = \frac{1}{12} \height{\LLL} (j(E))$. 
\end{lemm}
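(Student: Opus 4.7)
The plan is to compute $\height{\LLL}(j(E))$ directly from its definition by choosing the nonzero section $s$ of $\LL$ corresponding to $\Delta$ under the isomorphism $w$ of~\eqref{modforms}, and to match each of the resulting local terms with the corresponding contribution in the definition~\eqref{defFaltings} of $\Fh(E_L/L)$. Note that $\iota^* s = 1$ as a rational function on $\X(\C)$ (so that $w(1) = \Delta$), hence under the homogeneous coordinates $[\zeta_0 : \zeta_1]$ with $\iota(\tau) = [c_4(\tau)^3 : \Delta(\tau)]$, the linear form $\ell_s$ is $\ell_s(\zeta_0, \zeta_1) = \zeta_1$, which is nonzero at $j(E) \neq \infty$. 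Since the height is invariant under base change, I compute it over $L$ using $[j(E):1]$ as homogeneous coordinates for the point.

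At an archimedean place $v$ corresponding to an embedding $\sigma \colon L \to \C$, and a choice of $\tau_\sigma$ with $j(\tau_\sigma) = \sigma(j(E))$, the isometry $w$ and the definition of the Petersson norm give
$$ -\log \npetv{s(j(E))} = -\log\left((4\pi \Im(\tau_\sigma))^6 |\Delta(\tau_\sigma)|\right) = g_\infty(\tau_\sigma), $$
so summing over archimedean places reproduces $\sum_\sigma g_\infty(\tau_\sigma)$. At a finite place $v$, the definition of the canonical metric gives
$$ -\log \ncanv{s(j(E))} = \log \max(|j(E)|_v, 1). $$

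The crucial step, and the main technical point, is to identify this local height contribution with $-\log|\Delta_{E_L/L}|_v$, using that $E_L$ is semistable. I would invoke the standard classification of reduction types over a local field: if $E_L$ has good reduction at $v$, then $|\Delta_{E_L/L}|_v = 1$ and $|j(E)|_v \leq 1$, so both sides vanish; if $E_L$ has multiplicative reduction at $v$, then the Tate parametrization forces $v(j(E)) = -v(\Delta_{E_L/L}) < 0$, so $|j(E)|_v = |\Delta_{E_L/L}|_v^{-1} > 1$, again matching. This is where the hypothesis that $E_L$ be semistable is indispensable, since for additive reduction $v(j)$ and $v(\Delta)$ are unrelated in this simple way.

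Summing the non-archimedean contributions and using that $\log|N_{L/\Q}(\Delta_{E_L/L})| = -\sum_{v \in L^0} \log|\Delta_{E_L/L}|_v$ (by the definition of the absolute norm of the minimal discriminant ideal, with normalized absolute values), I obtain
$$ \height{\LLL}(j(E)) = \frac{1}{[L:\Q]}\left( \log|N_{L/\Q}(\Delta_{E_L/L})| + \sum_{\sigma\colon L \to \C} g_\infty(\tau_\sigma)\right) = 12\, \Fh(E_L/L), $$
which is the desired identity. The only subtle point is the local comparison $\max(|j|_v,1) = |\Delta_{E_L/L}|_v^{-1}$ under semistability; the rest is bookkeeping with the normalizations of the absolute values at archimedean and non-archimedean places.
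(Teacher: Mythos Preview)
Your proof is correct and follows essentially the same approach as the paper: both choose the section corresponding to $\Delta$, identify the archimedean term with $g_\infty$, compute the non-archimedean term as $\log^+|j(E)|_v$, and then reduce to the local identity $-\log|\Delta_{E_L/L}|_v = \log^+|j(E)|_v$ by a case analysis on the reduction type under semistability. The only cosmetic difference is that the paper cases on whether $|j(E)|_v>1$ and uses $|c_4|_v=1$ in the multiplicative case, whereas you case on the reduction type and invoke the Tate parametrization; these are equivalent.
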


\begin{proof}
Let $s_{\Delta }$ be  the section
  corresponding to $\Delta$ through \eqref{modforms}. The
  corresponding linear form in the homogeneous coordinates $[c_{4}^{3}:\Delta ]$ is
  again $\Delta $. Therefore
  \begin{equation}
    \label{eq:17}
    -\log \ncanv{s_{\Delta }}= -\log \frac{|\Delta |_v}{\max\{
      |c_{4}^{3}|_v, |\Delta|_{v} \}}=\log^+|j|_v, \quad \textrm{ for each } v \in L^0. 
  \end{equation}
By the independence of  $\height{\LLL} (\zeta)$ on
  the choice of $K$, the fact that \eqref{modforms} is an isometry and equation \eqref{eq:17},  the assertion boils
  down to the equality
\begin{equation}\label{magic}
-\log|\Delta_{E_L/L}|_v= \log^+|j(E)|_v, \quad \text{for each } v \in L^0.
\end{equation}

For any place $v\in L^{0}$, we choose a minimal equation for the place
$v$, having  an associated quantity $c_4(E,v)$ as in
\cite[p.~46]{Silver}.
Then
\begin{displaymath}
  |j(E)|_v=\frac{|c_4(E,v)^3|_v}{|\Delta_{E/L}|_v}.
\end{displaymath}

Assume $|j(E)|_v>1$. By hypothesis, $E$ has split multiplicative reduction. Hence, 
$|c_4(E,v)|_v=1$ (\emph{cf}. \cite[Proposition~III.1.4]{Silver}). The above relation
implies \eqref{magic}. 

Assume that $|j(E)|_v\leq 1$. Then, $E$ has good reduction at
$v$. Since $\Delta_{E/L}$ is minimal, we have that
$|\Delta_{E/L}|_v=1$ and  both sides of \eqref{magic} are zero.
\end{proof}

Now we compare the lower bounds of~$\Fmuess$ in Theorem~\ref{t:minima} with that obtained by Zhang's bounds on successive minima.
Since the Petersson metric is singular, Theorem 5.2 in \cite{Zhang} does not apply directly to our situation.
We use instead the generalization by Bost and Freixas-i-Montplet~\cite[Theorem~3.5]{Bost-Freixas}.
To state the lower bound, denote by~$\height{\LLL}(\X)$ the height of~$\X$ with respect to~$\LLL$, by~$\muess{\LLL}$ its essential minimum, and by $\zeta$ the Riemann zeta function.
Combined with the computation of~$\height{\LLL}(\X)$ in~\cite[Theorem~6.1]{Kuhn}, the lower bound reads
\begin{equation}\label{cota de Zhang}
\Fmuess
=
\frac{1}{12} \muess{\LLL}
\ge
\frac{1}{12}\cdot \frac{ \height{\LLL}(\X)}{2}
=
6 \left(\frac{1}{2}\zeta(-1) + \zeta'(-1) \right)
=
-1.2425268622 ...,
\end{equation}
which is weaker than the lower bound in Corollary~\ref{c:numerical}, and cannot be used to deduce that $\Fh(0) < \Fmuess$.
Actually, these numerical estimates together with Corollary~\ref{c:numerical} imply the following.
\begin{coro}
\label{no Zhang}
Denoting by~$\muabs{\LLL}$ the infimum of~$\height{\LLL}$ on~$\overline{\Q}$, we have $\frac{1}{2} \height{\LLL}(\X)  < \muabs{\LLL} < \muess{\LLL}$.
\end{coro}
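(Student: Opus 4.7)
The plan is to derive both strict inequalities directly from the information already collected in the excerpt, together with the dictionary between $\Fh$ and $\height{\LLL}$ provided by Lemma~\ref{magia}.

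First, I would identify $\muabs{\LLL}$ explicitly. By Lemma~\ref{magia}, the function $\alpha \mapsto \height{\LLL}(\alpha)$ on $\overline{\Q} \setminus \{[1:0]\}$ agrees with $12\,\Fh(\alpha)$ after the identification $\alpha = j(E_\alpha)$. Theorem~\ref{t:minima} states that every algebraic $\alpha \neq 0,1$ satisfies $\Fh(\alpha) \ge \Fh(1) + \kappa > \Fh(0)$, and that $\Fh(0) < \Fh(1)$. Hence the infimum of $\Fh$ on $\overline{\Q}$ is attained at $\alpha = 0$, and consequently
\[ \muabs{\LLL} = 12\,\Fh(0), \qquad \muess{\LLL} = 12\,\Fmuess. \]

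The right-hand inequality $\muabs{\LLL} < \muess{\LLL}$ is then just a restatement, after multiplying by $12$, of the first strict inequality in~\eqref{cadena}, which is part of Theorem~\ref{t:minima}.

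For the left-hand inequality $\tfrac{1}{2}\height{\LLL}(\X) < \muabs{\LLL}$, I would simply compare the two numerical values. From \eqref{cota de Zhang} we have
\[ \tfrac{1}{12} \cdot \tfrac{1}{2} \height{\LLL}(\X) = 6\bigl(\tfrac{1}{2}\zeta(-1) + \zeta'(-1)\bigr) = -1.2425268622\ldots, \]
whereas \eqref{gmin} gives
\[ \tfrac{1}{12}\,\muabs{\LLL} = \Fh(0) = -0.748752485\ldots \]
Since $-1.2425\ldots < -0.7487\ldots$, multiplying by $12$ yields $\tfrac{1}{2}\height{\LLL}(\X) < \muabs{\LLL}$. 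There is no genuine obstacle here; the content of the corollary is really the observation that Zhang's successive-minima bound, even with the exact value of $\height{\LLL}(\X)$ from~\cite{Kuhn}, falls strictly below the absolute minimum of the height, so in particular it cannot be used to separate $\Fh(0)$ from $\Fmuess$.
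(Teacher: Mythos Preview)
Your proof is correct and follows essentially the same approach as the paper, which simply states that the corollary follows from ``these numerical estimates together with Corollary~\ref{c:numerical}''. You have spelled out explicitly what the paper leaves implicit: the identification $\muabs{\LLL}=12\,\Fh(0)$ via Lemma~\ref{magia} and the fact (from Theorem~\ref{t:minima} or Section~\ref{valor minimo}) that $\Fh(0)$ is the global minimum, followed by the direct numerical comparison of~\eqref{gmin} with~\eqref{cota de Zhang}.
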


\subsection{Lower bounds through real sections}\label{algoritmo inferior}

We consider the graded semigroup
\begin{displaymath}
  S_{\z}=\coprod_{n\ge 0}\Gamma (\X,\LL^{\otimes n})\setminus\{0\}
\end{displaymath}
with the tensor product as operation. We denote by $S_{\erre}$ the
corresponding semigroup with real coefficients. That is, any element
of $s\in S_{\erre}$, called a real global section, can be represented
(non-uniquely) as 
\begin{equation}\label{eq:3}
  s=s_{1}^{\otimes a_{1}}\otimes\dots\otimes s_{\ell}^{\otimes a_{\ell}},\quad
  s_{1},\dots,s_{\ell} \in S_\Z, a_{1},\dots,a_{\ell} >0.
\end{equation}
The \emph{support} of the divisor of~$s$ is the set
\begin{displaymath}
  |\dv(s)|\coloneqq\bigcup_{k}|\dv(s_{k})|\subset \X(\overline{\q}),
\end{displaymath}
and its \emph{weight}
\begin{displaymath}
\frac{1}{12} (a_{1}\deg(s_{1}) + \dots + a_{\ell}\deg(s_{\ell}));
\end{displaymath}
both are independent of the representation~\eqref{eq:3}.
We denote by~$S_{\erre,1}$ the space of real global sections
of weight one.
Any real global section $s\in S_{\erre}$ defines a \emph{Green function}
\begin{displaymath}
  \begin{matrix}
    g_{s}\colon & \X(\ce)&\longrightarrow & \erre\cup \{\infty\}\\
    &x&\longmapsto & -\log\npet{s(x)},
  \end{matrix}
\end{displaymath}
where
\begin{displaymath}
  \npet{s(x)}\coloneqq\prod_{i=1}^{\ell}\npet{s_{i}(x)}^{a_{i}}.
\end{displaymath}

The following is our main source of lower bounds of~$\Fmuess$.

\begin{prop}\label{prop:1}
  Let $s\in S_{\erre,1}$ be a real global section of weight one and $x\in
  \X(\ce)\setminus |\dv(s)|$ an algebraic point not belonging to the
  support of the divisor of $s$.
Then
  \begin{displaymath}
    \Fh(x)=\frac{1}{12} \height{\LLL}(x)\ge \inf_{y\in
      \X(\ce)}g_{s}(y)=-\log \sup _{y\in \X(\ce)}
    \npet{s(y)}. 
  \end{displaymath}
In particular
\begin{displaymath}
  \Fmuess \ge \inf_{y\in \X(\ce)}g_{s}(y).
\end{displaymath}
\end{prop}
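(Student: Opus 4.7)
The plan is to unpack the Arakelov-theoretic definition of~$\height{\LLL}$ and exploit that the canonical norm of any integral global section is bounded by~$1$ at every non-archimedean place. The equality $\Fh(x) = \tfrac{1}{12}\height{\LLL}(x)$ is immediate from Lemma~\ref{magia}, applied to a semistable elliptic curve defined over a suitable finite extension with $j$-invariant corresponding to~$x$ under~$\iota$; the content of the proposition is therefore the inequality.

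To set up the inequality, I would first extend formula~$\eqref{eq:9}$ to tensor powers and then to real sections. For each factor $s_i \in \Gamma(\X, \LL^{\otimes n_i})$ of the decomposition $s = s_1^{\otimes a_1} \otimes \cdots \otimes s_\ell^{\otimes a_\ell}$, and any number field~$K$ containing the coordinates of~$x$, the tensor-power analogue of~$\eqref{eq:9}$ reads
$$ n_i \cdot [K:\Q] \cdot \height{\LLL}(x) = \sum_{v \in K^0} -\log \ncanv{s_i(x)} + \sum_{v \in K^\infty} -\log \npetv{s_i(x)}. $$
Taking the $a_i$-weighted sum, using the multiplicativity of both norms under tensor products (so that $-\log \|s(x)\|_v = \sum_i a_i \cdot (-\log \|s_i(x)\|_v)$ at every place) together with the weight-one hypothesis, produces an identity expressing a fixed positive multiple of $\height{\LLL}(x)$ as the total sum $\sum_v -\log \|s(x)\|_v$.

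The key estimates are then local. At every non-archimedean place~$v$, each~$s_i$ is a global section of the integral line bundle $\LL^{\otimes n_i} \cong \cO_\X(n_i D_\infty)$, so $\ncanv{s_i(x)} \le 1$, and positivity of the~$a_i$ propagates this to $-\log \ncanv{s(x)} \ge 0$. At every archimedean~$v$ the local contribution equals $g_s$ evaluated at the corresponding complex point of~$\X(\C)$, hence is bounded below by $\inf_y g_s(y)$. Summing over all places of~$K$, with the archimedean embeddings totalling~$[K:\Q]$, and dividing by the appropriate factor, yields the claimed inequality $\Fh(x) \ge \inf_y g_s(y)$.

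The bound on~$\Fmuess$ is then immediate: since $|\dv(s)|$ is finite and the inequality $\Fh(\alpha) \ge \inf_y g_s(y)$ holds for every algebraic $\alpha \notin |\dv(s)|$, for any $\theta < \inf_y g_s(y)$ the level set $\{\alpha \in \overline{\Q} : \Fh(\alpha) \le \theta\}$ is finite, whence $\Fmuess \ge \inf_y g_s(y)$ by definition. The argument is essentially formal and there is no substantive obstacle; the only delicate point is bookkeeping the normalizations of the weight, the Petersson metric, and the archimedean embedding count.
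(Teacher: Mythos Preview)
Your proposal is correct and follows essentially the same route as the paper's proof: represent $s$ as a product of integral global sections with positive real exponents, use that each integral section has canonical norm at most~$1$ at every non-archimedean place to drop those terms, and bound each archimedean contribution below by the infimum of~$g_s$. The paper is slightly terser about the tensor-power normalization, while you are more explicit in tracking the weight-one condition, but the argument is the same.
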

\begin{proof}
  Choose a representation of $s$ as in~\eqref{eq:3}, and put
  $K \coloneqq \q(x)$. Let $\Sigma $ be the set of embeddings of $K$
  in $\ce$. Then by~\eqref{eq:9}
  \begin{displaymath}
    \Fh(x)=\sum_{i=1}^{k}a_{i}\frac{1}{[K:\q]}\left( \sum_{v \in K^0
    }-\log\ncanv{s_{i}(x)} +  
    \sum_{v \in K^\infty } -\log \npetv{s_{i}(x)} \right).
  \end{displaymath}
  Since the sections $s_{i}$ are global sections over the integer
  model $\X$, by the definition of the canonical metric be obtain that 
  $\ncanv{s_{i}(x)}\le 1$. Therefore 
  \begin{equation*}
    \begin{split}
  \Fh(x) & \ge \sum_{i=1}^{k}a_{i}\frac{1}{[K:\q]}
    \sum_{\sigma \in \Sigma } -\log \npet{s_{i}(\sigma (x))} \\
    &= \frac{1}{[K:\q]}
    \sum_{\sigma \in \Sigma } g_{s}(\sigma (x))  \\
    &\ge \inf_{y\in \X(\ce)}g_{s}(y).       
    \end{split}
  \end{equation*}
  The second statement follows directly from the first.
\end{proof}

\subsection{Review of low weight Eisenstein series}\label{eisenstein}

Here, we recall the definition and special values of some classical Eisenstein series.
Given $s \ge 0$, define $\sg_s(n)=\sum_{d |n, d \ge 1 }^{} d^s$. For $\tau\in \h$ we put $q=e^{2\pi i \tau}$. Let
 $$E_2(\tau)\coloneqq 1 - 24 \sum_{n = 1}^{\infty} \sg_1(n)q^n, \quad E_4(\tau)\coloneqq 1+240\sum_{n = 1}^{\infty} \sg_3(n)q^n, \quad E_6(\tau)\coloneqq 1-504\sum_{n = 1}^{\infty} \sg_5(n)q^n.$$

\noindent We also define $$\east(\tau)\coloneqq E_2(\tau) - \frac{3}{\pi \Im(\tau)}.$$ 

The functions~$E_4 $ and $E_6$ are modular forms of level one and weight $4$ and $6$, respectively. The function $\east$ satisfies the relations $$\east\left(-\frac{1}{\tau}\right)=\tau^2\east(\tau), \quad \east(\tau+1)=\east(\tau), \textrm{ for all } \tau \in \h,$$  but  it is not holomorphic. On the other hand, $E_2$ is holomorphic (even at infinity) but it is not a classical modular form.

Ramanujan's identities, see \emph{e.g.} \cite[Theorem~X.5.3]{Lang76}, imply the following relations 
\begin{equation}\label{Ramanujan}
 E_2'=\frac{\pi i}{6}(E_2^2-E_4) , \quad E_4'=\frac{2 \pi i}{3} (E_2E_4-E_6).
\end{equation}

\begin{lemm} \label{Edos}
Letting $\rho \coloneqq e^{\pi i /3}$, we have
\begin{enumerate}\label{valores especiales}
 \item \label{primero}
$$ E_2(\rho)=\frac{2\sqrt{3}}{\pi}, \quad E_2'(\rho)=\frac{2i}{\pi} , \quad E_2''(\rho)=-\frac{4}{\sqrt{3}\pi}-\frac{\pi^2}{9}E_6(\rho). $$
 \item \label{eseis} $E_6(\rho)=\frac{3^3}{2^9}\cdot \frac{\Gamma(1/3)^{18}}{\pi^{12}}.$
 \item \label{jjota} $j'''(\rho)= -i\pi^3\cdot 2^{10} \cdot 3 \cdot E_6(\rho).$
\item \label{ddelta} $\Delta(\rho)=-\frac{3^3}{2^{24}}\cdot \frac{\Gamma(1/3)^{36}}{\pi^{24}}.$
\end{enumerate}
\end{lemm}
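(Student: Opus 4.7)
The plan is to derive all four items from three classical inputs: the quasi-modular transformation law for $E_2^*$, Ramanujan's identities~\eqref{Ramanujan}, and a single closed-form evaluation at the CM point $\rho$.

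First, for part~(\ref{primero}), I would exploit that $\rho$ is fixed by $\gamma=\left(\begin{smallmatrix} 0 & -1 \\ 1 & -1 \end{smallmatrix}\right)\in\SL_2(\Z)$, which is immediate since $\rho-1=e^{2\pi i/3}$ and $-1/(\rho-1)=\rho$. Because $E_2^*$ transforms as a weight-two modular form under $\SL_2(\Z)$ and $(c\rho+d)^2=(\rho-1)^2=e^{4\pi i/3}\neq 1$, the identity $E_2^*(\gamma\rho)=(\rho-1)^2 E_2^*(\rho)$ forces $E_2^*(\rho)=0$, whence $E_2(\rho)=3/(\pi\Im\rho)=2\sqrt{3}/\pi$. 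The same argument applied to the genuine modular form $E_4$ gives $E_4(\rho)=0$. Plugging into the first Ramanujan identity in~\eqref{Ramanujan} yields $E_2'(\rho)=(\pi i/6)E_2(\rho)^2=2i/\pi$, and differentiating that identity once more and using the second Ramanujan identity together with $E_4(\rho)=0$ gives the displayed formula for $E_2''(\rho)$ by direct substitution.

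Part~(\ref{eseis}) is the main obstacle and the step that depends on an external input. I would rely on the standard identification $g_3(\tau)=(8\pi^6/27)E_6(\tau)$ between the cubic Weierstrass invariant of the lattice $\Z+\tau\Z$ and the weight-six Eisenstein series, combined with the classical closed-form evaluation $g_3(\rho)=\Gamma(1/3)^{18}/(2^6\pi^6)$ for the equianharmonic lattice, a consequence of the Chowla--Selberg formula (or, equivalently, of Hurwitz's evaluation of $\eta(\rho)$). The sign is pinned down by the reality of $E_6(\rho)$, which follows from $E_6$ having real Fourier coefficients together with $-\overline{\rho}\equiv\rho\pmod{\Z}$.

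Parts~(\ref{jjota}) and~(\ref{ddelta}) are then algebraic consequences of the preceding two. For~(\ref{jjota}), I would differentiate $j=E_4^3/\Delta$ three times at $\rho$: since $E_4(\rho)=0$ while $\Delta(\rho)\neq 0$, all terms in the Leibniz expansion containing a surviving factor of $E_4$ vanish, leaving $j'''(\rho)=6E_4'(\rho)^3/\Delta(\rho)$. The second Ramanujan identity at $\rho$ gives $E_4'(\rho)=-(2\pi i/3)E_6(\rho)$, and the syzygy $E_4^3-E_6^2=1728\,\Delta$ evaluated at $\rho$ gives $\Delta(\rho)=-E_6(\rho)^2/1728$. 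Substituting both and simplifying the constant $6\cdot(2\pi i/3)^3\cdot(-1728)$ produces the factor $-i\pi^3\cdot 2^{10}\cdot 3$. Finally,~(\ref{ddelta}) is immediate from $\Delta(\rho)=-E_6(\rho)^2/1728$ and part~(\ref{eseis}), with the arithmetic $1728=2^6\cdot 3^3$ absorbed into the prefactor.
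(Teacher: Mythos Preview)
Your argument is correct and, for parts~(\ref{primero}) and~(\ref{ddelta}), essentially identical to the paper's: the paper also uses that $\rho$ is fixed by $\tau\mapsto 1/(1-\tau)$ to force $E_2^*(\rho)=E_4(\rho)=0$, then feeds this into Ramanujan's identities for the derivatives of $E_2$, and reads off~(\ref{ddelta}) from the syzygy $1728\Delta=E_4^3-E_6^2$.

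The one genuine difference is in parts~(\ref{eseis}) and~(\ref{jjota}). The paper simply outsources both to an external reference (W{\"u}stholz), whereas you supply actual content: for~(\ref{eseis}) you name the underlying classical input (the equianharmonic evaluation of $g_3(\rho)$ via Chowla--Selberg/Hurwitz, together with the identification $g_3=(8\pi^6/27)E_6$), and for~(\ref{jjota}) you give a self-contained derivation by differentiating $j=E_4^3/\Delta$ three times and exploiting that $E_4,E_4^2,E_4^3$ and their first two derivatives all vanish at $\rho$. Your computation $j'''(\rho)=6E_4'(\rho)^3/\Delta(\rho)$ followed by $E_4'(\rho)=-(2\pi i/3)E_6(\rho)$ and $\Delta(\rho)=-E_6(\rho)^2/1728$ checks out and yields the stated constant $-i\pi^3\cdot 2^{10}\cdot 3$. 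This is an improvement in self-containment over the paper's proof, at no real cost.
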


\begin{proof} 
A proof of statements~\ref{eseis} and~\ref{jjota} can be found in  \cite[p.~777]{Wustholz14}.
We proceed to justify statement~\ref{primero}. Since $\east $ is weakly modular of weight two, and $\rho$ is fixed by $\tau\mapsto \frac{1}{1-\tau}$, we have that $\east(\rho)=0$, implying $E_2(\rho)= \frac{2\sqrt{3}}{\pi}$.  Similarly, using that $E_4$ is modular of weight four, we have that $E_4(\rho)=0$. Then, using~\eqref{Ramanujan}, we obtain $$E_2'(\rho) =  \frac{\pi i}{6}E_2^2(\rho) =  \frac{\pi i}{6}\left(\frac{2\sqrt{3}}{\pi}\right)^2=\frac{2i}{\pi}.$$  

Using~\eqref{Ramanujan} again, we have that  $E_2'' = \frac{\pi i}{6}(2E_2\cdot E_2' -E_4')$ and $E_4'(\rho)=-\frac{2\pi i}{3}E_6(\rho)$. Then, 

$$E_2''(\rho)=\frac{\pi i }{6} \left( 2 \cdot \frac{2\sqrt{3}}{\pi} \cdot \frac{2i}{\pi} +  \frac{2\pi i}{3}E_6(\rho)\right)=-\frac{4}{\sqrt{3}\pi} -\frac{\pi^2}{9}E_6(\rho),$$

\noindent proving claim~\ref{primero}. Finally, statement~\ref{ddelta} follows from the other statements and the identity $\Delta = \frac{1}{1728}(E_4^3-E_6^2)$ \cite[p.~9 and X.\S4, Theorem~4.1]{Lang76}.
\end{proof}

We record here a result of Masser on the zeroes and real values of~$\east$, which is shown in the proof of~\cite[Lemma~3.2]{Masser75}.
\begin{lemm}
\label{realidad Eisenstein}
The function $\east$ vanishes at, and only at, the $\SL_2(\Z)$-orbits of $i$ and $\rho$. Moreover, we have that
 $\Im \left(\east(z)\right)=0$ if and only if $\Re (z) \in \frac{1}{2}\Z$.
\end{lemm}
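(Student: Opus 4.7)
The plan is to establish the two assertions in sequence.

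For the vanishing at $i$ and $\rho$, I apply the weight-$2$ transformation law $\east(\gamma\tau) = (c\tau+d)^2 \east(\tau)$ combined with non-trivial stabilizers in $\SL_2(\Z)$: the matrix $\begin{pmatrix}0 & -1\\ 1 & 0\end{pmatrix}$ fixes $i$ with cocycle $i^2 = -1$, yielding $\east(i) = -\east(i)$; and $\begin{pmatrix}1 & -1\\ 1 & 0\end{pmatrix}$ fixes $\rho$ with cocycle $\rho^2 = e^{2\pi i/3} \neq 1$, forcing $\east(\rho) = 0$.

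For the uniqueness of these zeros modulo $\SL_2(\Z)$, the key identity is
\[
\east(\tau) = \frac{i}{\pi}\,\partial_\tau \gH(\tau),
\]
obtained from $\Delta = \eta^{24}$, $E_2 = (2\pi i)^{-1} \Delta'/\Delta$ and $\partial_\tau \log \Im\tau = 1/(\tau - \bar\tau)$. Since $\gH$ is real-valued and $\SL_2(\Z)$-invariant, the zeros of $\east$ on $\H$ coincide with the critical points of $\gH$, and descend to critical points on $X \setminus \{\infty\}$. Following \cite{Masser75}, I would then count these critical points by integrating $d\east/\east$ around the boundary of a truncated standard fundamental domain: the vertical sides cancel by $T$-invariance, the top horizontal segment contributes nothing since $\east \to 1$ at the cusp, and the arc $|\tau|=1$ contributes the expected cocycle term for weight-$2$ modular forms. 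The resulting total precisely matches the forced orbifold contributions $1/2$ at $[i]$ and $1/3$ at $[\rho]$, excluding any additional critical points.

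For the second assertion, note that $\Im\east = \Im E_2$ because $-3/(\pi\Im\tau)$ is real. Writing $\tau = x+iy$, the $q$-expansion gives
\[
\Im E_2(\tau) = -24\sum_{n=1}^\infty \sigma_1(n)\, e^{-2\pi n y}\, \sin(2\pi n x),
\]
which vanishes whenever $x \in \tfrac12\Z$. For the converse, the reality of the Fourier coefficients yields $\overline{\east(\tau)} = \east(-\bar\tau)$, so $\Im\east(\tau) = 0$ iff $\east(\tau) = \east(-\bar\tau)$. Using the Lambert-series rewriting
\[
\Im E_2(\tau) = -24 \sum_{d=1}^\infty d\,\frac{|q|^d \sin(2\pi d x)}{|1-q^d|^2},
\]
one shows the $d=1$ term strictly dominates the tail for each fixed $y$, so that $\Im E_2$ has the sign of $-\sin(2\pi x)$ and vanishes exactly when $\sin(2\pi x) = 0$. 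The tail bound uses $|1-q^d| \geq 1 - |q|^d$, the divisor bound $\sigma_1(d) \ll d^{1+\varepsilon}$, and, for $\Im\tau$ small where the tail bound is delicate, a refined Taylor analysis near $x=0$ and $x=1/2$ (leveraging the explicit signs of $\partial_x \Im E_2$ there).

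The main obstacle in both parts is ruling out spurious zeros beyond those forced by the modular structure: for claim~$1$, a global count on the orbifold $X$; for claim~$2$, an oscillating sign analysis of the Lambert series. Both rely on quantifying the intuition that the leading Fourier contribution dominates, with the delicate regime being small values of $\Im\tau$ in claim~$2$.
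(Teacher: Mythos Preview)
The paper does not prove this lemma; it records the result as due to Masser and cites \cite[Lemma~3.2]{Masser75}. So there is no in-paper argument to compare against.

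Your sketch has two genuine gaps.

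\textbf{First assertion.} You propose to count the zeros of $\east$ by integrating $d\east/\east$ around the boundary of a truncated fundamental domain and invoking the valence-formula boundary computation for weight-$2$ forms. But $\east$ is not holomorphic: it differs from $E_2$ by the real-analytic term $-3/(\pi\Im\tau)$. The argument principle therefore does not apply, and the boundary contributions you list (cancellation on vertical sides, cocycle term on the arc) are features of the holomorphic valence formula that have no direct analogue here. One can attempt a degree-theoretic zero count for the smooth map $\east\colon\H\to\C$, but then the contribution of each zero is the sign of the real Jacobian of $\east$, and the boundary computation is not the one you describe.

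\textbf{Second assertion.} Your Lambert-series domination is straightforward when $\Im\tau$ is bounded below, but you leave the small-$\Im\tau$ regime to ``a refined Taylor analysis,'' which is not an argument. You cannot reduce to large $\Im\tau$ by modularity: since $\east(\gamma\tau)=(c\tau+d)^2\east(\tau)$ and $(c\tau+d)^2$ is generally not real, the condition $\Im\east(\tau)=0$ is not $\SL_2(\Z)$-invariant. So the small-$\Im\tau$ case is an essential difficulty, and as $r=e^{-2\pi\Im\tau}\to1$ the terms in your Lambert series become comparable in size; the $d=1$ term does not dominate in any obvious way.

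A more robust strategy (closer to how the two statements interact) is to establish the second assertion first by an argument uniform in $\Im\tau$, and then use it for the first: if $\east(\tau)=0$ then in particular $\Im\east(\tau)=0$, forcing $\Re\tau\in\tfrac12\Z$; within the standard fundamental domain this confines $\tau$ to the half-lines $\Re\tau=0$ and $\Re\tau=\tfrac12$ (on the arc $|\tau|=1$ only $i$ and $\rho$ satisfy the half-integer condition), where a direct monotonicity analysis of $t\mapsto\east(it)$ and $t\mapsto\east(\tfrac12+it)$ locates the unique zeros at $i$ and $\rho$.
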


We denote by $\partial$ the holomorphic derivative. That is, for a given complex variable $\tau=x+iy \in \C$, we have $\partial=\frac{1}{2}(\partial_x -i \partial_y)$. 

\begin{lemm}
   The following identities hold
\begin{eqnarray}
\gH(\tau) & = & 2\pi \Im(\tau) - 6 \log(\Im(\tau)) - 6 \log(4\pi)
- 24 \sum_{r = 1}^{\infty} \log|1 - q(\tau)^r|.  \label{eq:6} \\
\partial \gH & = &-\pi i  \east. \label{integral}
 \end{eqnarray}

\end{lemm}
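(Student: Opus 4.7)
The plan is to derive the first identity by unpacking the definition of $g_\infty$ together with the product formula for $\Delta$, and then to deduce the second identity by applying the operator $\partial$ term by term to the first.

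First I would start from
$$g_\infty(\tau)=-\log\!\left((4\pi\Im(\tau))^{6}|\Delta(\tau)|\right)=-6\log(4\pi)-6\log\Im(\tau)-\log|\Delta(\tau)|,$$
and substitute $\Delta(\tau)=q\prod_{n\ge 1}(1-q^{n})^{24}$ with $q=e^{2\pi i\tau}$. Using $|q|=e^{-2\pi\Im(\tau)}$, so that $\log|\Delta(\tau)|=-2\pi\Im(\tau)+24\sum_{r\ge 1}\log|1-q^{r}|$, gives identity~\eqref{eq:6} after collecting terms. This step is formal once the product formula is accepted.

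For identity~\eqref{integral}, I would differentiate~\eqref{eq:6} term by term, exploiting that $\partial=\tfrac{1}{2}(\partial_{x}-i\partial_{y})$ sends the real coordinate $y=\Im(\tau)$ to $-i/2$ and annihilates anti-holomorphic functions. Hence:
\begin{itemize}
\item $\partial(2\pi y)=-\pi i$;
\item $\partial(-6\log y)=3i/y=3i/\Im(\tau)$;
\item the constant $-6\log(4\pi)$ contributes zero;
\item writing $\log|1-q^{r}|=\tfrac{1}{2}\log(1-q^{r})+\tfrac{1}{2}\log(1-\bar q^{r})$ locally, the anti-holomorphic half is killed by $\partial$, leaving
$\partial(-24\log|1-q^{r}|)=-12\cdot\dfrac{-2\pi i r q^{r}}{1-q^{r}}=\dfrac{12\pi i r q^{r}}{1-q^{r}}.$
\end{itemize}
Wait—checking: $\partial(-24\log|1-q^r|)= -12 \partial\log(1-q^r) = -12\cdot \frac{-2\pi i r q^r}{1-q^r}= \frac{24\pi i r q^r}{1-q^r}$. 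Summing over $r\ge 1$ and using the standard Lambert-series identity $\sum_{r\ge 1}\tfrac{rq^{r}}{1-q^{r}}=\sum_{n\ge 1}\sigma_{1}(n)q^{n}$, I obtain
$$\partial g_\infty(\tau)= -\pi i+\frac{3i}{\Im(\tau)}+24\pi i\sum_{n\ge 1}\sigma_{1}(n)q^{n}.$$

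Finally I would compare with $-\pi i\,\east(\tau)=-\pi i E_{2}(\tau)+\tfrac{3i}{\Im(\tau)}$, and plug in the Fourier expansion $E_{2}(\tau)=1-24\sum_{n\ge 1}\sigma_{1}(n)q^{n}$ from Section~\ref{eisenstein}. The two expressions coincide, proving~\eqref{integral}. The only technical point is the term-by-term differentiation of the series $\sum_{r}\log|1-q^{r}|$, which is justified by absolute and uniform convergence on compact subsets of $\H$ (since $|q|<1$), so this is not really an obstacle. I do not expect any genuine difficulty; the lemma is essentially a bookkeeping exercise once one has the definition of $g_\infty$, the product formula for $\Delta$, and the $q$-expansion of $E_{2}$.
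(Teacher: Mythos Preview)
Your proposal is correct and follows essentially the same route as the paper: both derive~\eqref{eq:6} directly from the product formula for~$\Delta$, then differentiate term by term to obtain $\partial g_\infty = -\pi i + 3i/\Im(\tau) + 24\pi i\sum_{r\ge 1} r q^r/(1-q^r)$, expand via the Lambert series $\sum_r r q^r/(1-q^r)=\sum_n\sigma_1(n)q^n$, and match with the $q$-expansion of $-\pi i\,\east$. Your self-correction of the factor $12$ to $24$ is right, and your remark on uniform convergence justifying term-by-term differentiation makes explicit a point the paper leaves implicit.
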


\begin{proof} Equation~\eqref{eq:6} is a direct consequence of the product formula for the modular discriminant. We then deduce

\begin{eqnarray}
 \partial \gH(\tau) &=&\frac{3 i}{\Im(\tau)}
- \pi i
+ 24 \pi i \sum_{r = 1}^{\infty} r \frac{q(\tau)^r}{1 - q(\tau)^r} \label{eq:7}\\
&=&\frac{3 i}{\Im(\tau)}
- \pi i
+ 24 \pi i \sum_{r = 1}^{\infty} r \sum_{s=1 }^{\infty }  q(\tau)^{rs} \nonumber \\
&=&\frac{3 i}{\Im(\tau)}
- \pi i
+ 24 \pi i \sum_{r = 1}^{\infty}  \sigma_1(r) q(\tau)^r
\nonumber \\
&=& -\pi i \east(\tau). \nonumber 
\end{eqnarray}
\end{proof}

\section{First and second minima of Faltings' height}
\label{reduccion extremal}
In this section we prove Theorem~\ref{t:minima} assuming Proposition~\ref{p:minima}.
The proof is in Section~\ref{seccion esencial}, after we give in Section~\ref{valor minimo} a proof of~\eqref{gmin} and of the fact that the minimum value of~$\Fh$ is~$\Fh(0)$.

In what follows we use the following formula of~$\Fh$.
First, for each prime number~$p$ fix an extension~$| \cdot |_p$ to~$\overline{\Q}$ of the~$p$-adic norm on~$\Q$.
Furthermore, consider the action of the Galois group~$\Gal \left( \overline{\Q} / \Q \right)$ on~$\overline{\Q}$ and for~$\alpha$ in~$\overline{\Q}$ denote by~$\cO(\alpha)$ the orbit of~$\alpha$.
Then, choosing $s$ in ~\eqref{eq:9} as the section corresponding to $\Delta \in M_{12}\left(SL_2(\z)\right)$ through \eqref{modforms}, we have by Lemma \ref{magia}
\begin{equation}
  \label{e:global/local}
\Fh(\alpha)
=
\frac{1}{12} \left( \frac{1}{\# \cO(\alpha)} \sum_{\alpha' \in \cO(\alpha)} \ghyp(\alpha')
+
\frac{1}{\# \cO(\alpha)} \sum_{p \text{ prime}} \sum_{\alpha' \in \cO(\alpha)} \log^+ |\alpha'|_p \right).
\end{equation}

Throughout this section we set~$\rho \coloneqq e^{\pi i / 3}$ and denote by
\begin{equation}\label{dominio fundamental}
 T\coloneqq\left\{\tau \in \H : \left| \Re(\tau) \right| \leq \frac{1}{2}, |\tau|\geq 1 \right\}
\end{equation}
the closure of the standard fundamental domain for the action of $\SL_2(\Z)$ on $\H$. 

\subsection{Minimum value of Faltings' height}\label{valor minimo}
In this section we prove~\eqref{gmin} and the fact that the minimum value of~$\Fh$ is~$\Fh(0)$.

The first equality in~\eqref{gmin} is a direct consequence of~\eqref{e:global/local} and~$j(\rho) = 0$ and the second one is a direct consequence of Lemma~\ref{Edos}, \ref{ddelta}.
To show that the minimum value of~$\Fh$ is~$\Fh(0)$, consider the lower bound
$$ \inf \left\{ \Fh(\alpha) : \alpha \in \overline{\Q} \right\}
\ge
\frac{1}{12} \inf \left\{ \ghyp(\zeta) : \zeta \in \C \right\}, $$
which follows trivially from~\eqref{e:global/local}.
Since by~\eqref{e:global/local} we also have~$\Fh(0) = \frac{1}{12} \ghyp(0)$, the following lemma implies that the minimum value of~$\Fh$ is~$\Fh(0)$.
\begin{lemm}
\label{l:really increasing}
For every~$\tau$ in~$T$, we have~$\gH(\tau) \ge \gH \left(\frac{1}{2}  +
  i \Im(\tau) \right)$, with equality if and only if~$\Re(\tau) =\frac{1}{2}$.
Moreover, the function~$t \mapsto \gH\left(\frac{1}{2} + i t\right)$ is strictly increasing
on~$\left[ \frac{\sqrt{3}}{2}, + \infty \right)$.
In particular, the function~$\ghyp$ attains its minimum value at, and only at, $\zeta = 0$.
\end{lemm}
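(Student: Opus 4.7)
The plan is to rewrite the partial derivatives of $\gH$ in terms of $\east$ via~\eqref{integral}. Since $\gH$ is real-valued and $\partial = \frac{1}{2}(\partial_{x} - i\partial_{y})$, the identity $\partial \gH = -\pi i\, \east$ yields
\begin{equation*}
\partial_{x}\gH(\tau) = 2\pi\, \Im \east(\tau), \qquad \partial_{y}\gH(\tau) = 2\pi\, \Re \east(\tau).
\end{equation*}
Observe that $\Im \east = \Im E_{2}$ since $3/(\pi \Im \tau)$ is real, and that the symmetry $\gH(-\ov{\tau}) = \gH(\tau)$ holds because $\Delta$ has real Fourier coefficients.

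For the first assertion, fix $y > 0$ and view $\gH(x+iy)$ as a function of $x \in [-1/2, 1/2]$. By Lemma~\ref{realidad Eisenstein}, $\Im \east$ vanishes only on the vertical lines $\{\Re \tau \in \frac{1}{2}\z\}$, so it has constant sign on the connected strip $\{0 < \Re \tau < 1/2\}$. Inspecting the leading term $-24\, e^{-2\pi y}\sin(2\pi x)$ of the $q$-expansion of $\Im E_{2}$ as $y \to \infty$ pins this sign as negative, so $\partial_{x}\gH < 0$ on the strip and $\gH(x+iy)$ is strictly decreasing in $x$ on $[0, 1/2]$. The symmetry $x \mapsto -x$ then gives the first assertion.

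For the second assertion, the goal is to show $\Re \east(1/2 + iy) > 0$ for every $y > \sqrt{3}/2$. By Lemma~\ref{realidad Eisenstein}, $\east(1/2+iy)$ is already real, and it vanishes exactly when $1/2+iy$ lies in the $\SL_{2}(\z)$-orbit of $i$ or $\rho$. For $y > \sqrt{3}/2$ the point $1/2+iy$ belongs to $T$, and the standard identifications of $T$ show that its orbit meets $T$ only at $\{1/2+iy,\, -1/2+iy\}$, while the orbits of $i$ and $\rho$ meet $T$ only at $\{i\}$ and $\{\rho,\, \rho-1\}$ respectively; none of these coincide when $y > \sqrt{3}/2$. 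Hence $\east(1/2+iy) \ne 0$ on $(\sqrt{3}/2, \infty)$, and since the $q$-expansion gives $\east(1/2+iy) \to 1$ as $y \to \infty$, continuity forces $\east(1/2+iy) > 0$ throughout that ray. Integrating $\partial_{y}\gH$ then yields strict monotonicity of $t \mapsto \gH(1/2+it)$ on $[\sqrt{3}/2, \infty)$.

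The last assertion is now immediate: $j$ maps $T$ surjectively onto $\C$ with $j(\rho) = 0$, so every $\zeta \in \C$ equals $j(\tau)$ for some $\tau \in T$, whence
\begin{equation*}
\ghyp(\zeta) = \gH(\tau) \ge \gH\!\left(\tfrac{1}{2} + i\, \Im \tau\right) \ge \gH(\rho) = \ghyp(0),
\end{equation*}
with equality only when $\tau = \rho$, i.e., $\zeta = 0$. The main obstacle is the sign analysis of $\east$ on the ray $\{1/2+iy : y > \sqrt{3}/2\}$: Lemma~\ref{realidad Eisenstein} provides the global zero locus, but one must combine it with a fundamental-domain check and with the cuspidal asymptotics to rule out sign changes on that ray.
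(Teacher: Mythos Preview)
Your proof is correct and follows essentially the same approach as the paper: both arguments express the partial derivatives of $\gH$ via~\eqref{integral} and invoke Lemma~\ref{realidad Eisenstein} to locate the critical points and zeros. The only genuine difference is in how the direction of monotonicity in the first assertion is determined: you pin down the sign of $\Im\east$ on the strip $\{0<\Re\tau<1/2\}$ by cuspidal asymptotics and connectedness, whereas the paper instead observes that the extrema of $x\mapsto\gH(x+iy)$ lie at $x\in\{0,1/2\}$ and then compares $\gH(iy)$ with $\gH(1/2+iy)$ directly from the product formula for~$\Delta$ (for $q$ real and negative, each factor $|1-q^r|$ is at least as large as for $q$ real and positive with the same modulus). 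Both routes are short and yield the same conclusion; your version has the mild advantage of giving strict monotonicity on the whole interval rather than just a comparison of endpoints.

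One small remark on the equality case: your symmetry argument actually shows equality holds when $\Re(\tau)=\pm 1/2$, not only $\Re(\tau)=1/2$; this is consistent with the $1$-periodicity of $\gH$ and does not affect the final conclusion about $\ghyp$, since both boundary points $\rho$ and $\rho-1$ map to $\zeta=0$.
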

\begin{proof}
To prove the first statement,  fix $\tau \in T$ and define a $1$\nobreakdash-periodic, smooth function $l \colon \R \rightarrow \R $ by $l(s)\coloneqq\gH(s+i\Im(\tau)).$ Since $\gH$ is real valued, using~\eqref{integral} we have that 
$$l'(s)= 2 \Re \left(\partial \gH(s+i\Im(\tau))\right)= 2 \pi \Im \left(\east (s+i\Im(\tau))\right).$$
Hence, by Lemma~\ref{realidad Eisenstein}, we conclude that the maximum and minimum values of $l(\cdot)$ are attained at $s\in \left\{0,\frac{1}{2}\right\}$. Then, the desired inequality $l(0) > l(\frac{1}{2})$ is equivalent to $|\Delta(i\Im(\tau))|\leq |\Delta(\frac{1}{2}+i\Im(\tau))|$, and this is clear from the product formula for $\Delta$. 

To prove the second statement, note that  the function~$h \colon
(0, + \infty) \to \R$ defined by~$h(t) \coloneqq \gH\left(\frac{1}{2} + i t\right)$ satisfies
$$ h'(t)
=
- 2 \Im \left( \partial \gH\left(\frac{1}{2} + i t\right) \right)
= 2\pi \Re \left(\east\left(\frac{1}{2} + i t\right)\right)=2\pi \east\left(\frac{1}{2} + i t\right). $$
The last equality easily follows from the definition of $\east$. In particular, $h'$ is continuous and~$\lim_{t \to + \infty} h'(t) =
2\pi$.
The desired statement follows from the fact that~$h'(t)$ does not
vanish on~$\left( \frac{\sqrt{3}}{2}, + \infty \right)$, because the
function~$\east$ vanishes only at the orbits of~$i$ and~$\rho$, \emph{cf.} Lemma~\ref{realidad Eisenstein}.
\end{proof}

\subsection{Second minimum of Faltings' height}
\label{seccion esencial}
In this section we prove Theorem~\ref{t:minima} assuming Proposition~\ref{p:minima}.
We postpone the proof of Proposition~\ref{p:minima} to Section~\ref{key}.

From the product formula for the modular discriminant we deduce the asymptotic expansion
 
\begin{equation}\label{log log}
 \gH(\tau)=-\log |q| -6\log(-\log|q|)+  O(1), \quad \Im(\tau)\rightarrow \infty, \quad q=e^{2\pi i \tau}.
\end{equation}

\noindent Since $j(\tau)=\frac{1}{q}+O(1)$ when $\Im(\tau)\rightarrow \infty$, we infer from the definition of $\ghyp$ in~\eqref{def ghyp}  the asymptotic expansion

\begin{equation}\label{asintotica}
 \ghyp(z)=\log|z|-6\log(\log|z|) +O(1)  \textrm{ as }|z|\rightarrow \infty.
\end{equation}

On the other hand, the function $\ghyp$ is invariant under complex conjugation. More precisely, 
\begin{equation}\label{invarianza}
 \ghyp(\overline{z})=\ghyp(z), \quad \textrm{ for all } z \in \C.
\end{equation}
Indeed, choose $\tau \in \h$ with $j(\tau)=z$.
Since the coefficients in the $q$-expansion of~$j$ and~$\Delta$ are real, we have the identities
\begin{equation}\label{conju}
\overline{j(\tau)}=j(-\overline{\tau}), \quad \overline{\Delta(\tau)}=\Delta(-\overline{\tau}).
\end{equation}
  Then,

$$\ghyp(\overline{z})=\ghyp \circ j(-\overline{\tau})= \gH(-\overline{\tau}).$$

\noindent Since $\Im(-\overline{\tau})=\Im(\tau)$ and $|\Delta(-\overline{\tau})|=|\overline{\Delta(\tau)}|=|\Delta(\tau)|$, we have that 

$$\gH(-\overline{\tau})=\gH(\tau)=\ghyp \circ j (\tau)=\ghyp(z),$$ justifying~\eqref{invarianza}.

\begin{proof}[Proof of Theorem~\ref{t:minima}, assuming Proposition~\ref{p:minima}]
By~\eqref{e:global/local} and Lemma~\ref{l:really increasing} we have
$$ \Fh(1)
=
\frac{1}{12} \ghyp(1)
>
\frac{1}{12} \ghyp(0)
=
\Fh(0). $$
Thus, to prove the theorem it is enough to show that there is~$\kappa > 0$ such that for every algebraic number~$\alpha \neq 0, 1$ we have~$\Fh(\alpha) \ge \Fh(1) + \kappa$.
To do this, we essentially apply, for a sufficiently small~$\varepsilon > 0$, Proposition~\ref{prop:1} with~$s = (j - 1)^{\varepsilon} j^{\partial_x \ghyp(1)} \Delta$.

By Proposition~\ref{p:minima}, we have~$1 - \partial_x \ghyp(1) > 0$.
For each~$\varepsilon$ in~$(0, 1 - \partial_x \ghyp(1))$, let~$G_{\varepsilon} \colon \C \setminus\{0,1\}\to \R$ be
defined by
$$ G_{\varepsilon}(z)
\coloneqq
g_1(z) - \varepsilon \log|z - 1|
=
\ghyp(z) - \partial_x \ghyp(1) \cdot \log |z| - \varepsilon \log|z - 1|, $$
and for each prime number~$p$, let~$G_{\varepsilon, p} \colon \C_p \setminus\{0,1\} \to
\R$ be defined by
$$ G_{\varepsilon, p}(z)
\coloneqq
\log^+|z|_p - \partial_x \ghyp(1) \cdot \log |z |_p - \varepsilon
\log|z - 1|_p. $$
Since~$\partial_x \ghyp(1) + \varepsilon < 1$ and $\partial_x \ghyp(1) > 0$, the function~$G_{\varepsilon, p}$ is nonnegative.
Then by~\eqref{e:global/local} and by the product formula, for every~$\alpha$ in~$\overline{\Q} \setminus \{0, 1 \}$ we have
$$ 12 \Fh(\alpha)
=
\frac{1}{\# \cO(\alpha)} \sum_{\alpha' \in \cO(\alpha)} G_\eps(\alpha')
+ \frac{1}{\# \cO(\alpha)} \sum_{p \text{ prime}} \sum_{\alpha' \in \cO(\alpha)} G_{\eps,p}(\alpha'). $$
Since for each prime~$p$ the function~$G_{\varepsilon, p}$ is
nonnegative, to prove the theorem it is enough to show that
\begin{equation}
  \label{eq:5}
  \inf_{\C\setminus\{0,1\}} G_{\varepsilon} > \ghyp(1).
\end{equation}
Using the asymptotic of~$\ghyp$ given by~\eqref{asintotica}, it follows that there
are~$\varepsilon_0 > 0$ and~$R_0 > 0$ such that for each~$\varepsilon$ in~$(0, \varepsilon_0)$ and each~$z$ in~$\C$
satisfying~$|z| > R_0$, we have
$$G_{\varepsilon}(z) \ge \ghyp(1) + 1.$$
By Proposition~\ref{p:minima}, there is~$\varepsilon\in (0,\eps_0)$ such that for some~$\delta > 0$ and every~$z$ satisfying~$|z - 1| \ge 1/2$ and~$|z| \le R_0$, we have
$$ G_{\varepsilon}(z) \ge \ghyp(1) + \delta. $$
Finally, using Proposition~\ref{p:minima} again, for each~$z$ in~$\C$ satisfying~$|z - 1| \le 1/2$,
we have
$$ G_{\varepsilon}(z)
=
g_1(z) - \varepsilon \log|z - 1|
\ge
\ghyp(1) + \varepsilon \log 2.$$
This completes the proof of~\eqref{eq:5} and of the theorem.
\end{proof}

\begin{rema}
\label{multiplicador}
For any given $\zeta \in \C$, we write $\zeta = x+iy$ the real and imaginary parts.
For a real number~$a$, set $c(a)\coloneqq\inf_{\zeta \in \C} \ghyp(\zeta)-a\log|\zeta|$ and note that by Proposition~\ref{prop:1} with~$s = j^a \Delta$, 
\begin{equation}\label{ca}
 \Fmuess\geq c(a)/12.
\end{equation}
Using Proposition~\ref{p:minima},  we see that for any choice of~$a$ we have $c(a) \leq \inf_{|\zeta| = 1}\ghyp(\zeta)=\ghyp(1)$. Hence, the bound $\Fmuess\geq \ghyp(1)/12=\Fh(1)$ is the best we can hope for using~\eqref{ca}. In order to identify the value of $a$ such that $c(a)=\ghyp(1)/12$, we impose that $\zeta = 1$ is a critical point of $\ghyp(\cdot)-a\log |\cdot|$, thus finding that necessarily $a= \partial_x \ghyp(1) $. 
\end{rema}

\section{Distortion estimates}\label{Koebe}
In this section we estimate the inverse of~$j$ on a suitable neighborhood of the unit disk.
After recalling the Koebe distortion theorem and some of its variants below, we explain the set up in Section~\ref{ss:set up} and then we proceed to the estimates in Section~\ref{ss:j^-1}.

\begin{Thm}\label{Koebe Thm}
Let $\D$ be the open unit disk, and let $f_0 \colon \D \rightarrow \C$ be an univalent (\emph{i.e.} holomorphic and injective) function  such that $f_0(0)=0$ and $ f_0'(0)=1$.
Then for every $w \in \D$,

\begin{enumerate}
\item\label{unoKoebe}
$$ \frac{|w|}{(1+|w|)^2} \leq |f_0(w)| \leq \frac{|w|}{(1-|w|)^2},
\qquad
\frac{1-|w|}{(1+|w|)^3} \leq |f_0'(w)| \leq \frac{1+|w|}{(1-|w|)^3}. $$
\item\label{dosKoebe}
$$ \left| w \frac{f_0'}{f_0}(w)\right| \leq \frac{1+|w|}{1-|w|},
\qquad
\left| w \frac{f_0''}{f_0'}(w) - \frac{2|w|^2}{1-|w|^2}\right| \leq \frac{4|w|}{1-|w|^2}. $$
\item\label{tresKoebe}
$$ |f_0(w)-w|\leq \frac{|w|^2(2-|w|)}{(1-|w|)^2},
\quad
\left|w\frac{f_0'}{f_0}(w)-1\right| \leq 2 |w| \frac{(1+|w|)^2}{(1-|w|)^3}. $$
\end{enumerate}
\end{Thm}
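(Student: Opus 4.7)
The plan is to deduce all three parts from two classical facts: Bieberbach's second-coefficient inequality $|a_2| \le 2$ for the Schlicht class (univalent functions on $\D$ normalized by $f(0)=0$, $f'(0)=1$), together with the sharp bound $|a_n| \le n$ for $n \ge 2$ (de Branges's theorem).

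For part~\ref{unoKoebe}, I would use the Koebe transform. For $w \in \D$, form the Schlicht function
\[
F(z) \coloneqq \frac{f_0\!\left(\dfrac{z+w}{1+\bar w z}\right) - f_0(w)}{(1-|w|^2)\, f_0'(w)}.
\]
A direct Taylor expansion shows its second coefficient equals $\tfrac{1}{2}\bigl((1-|w|^2) f_0''(w)/f_0'(w) - 2\bar w\bigr)$, so Bieberbach's inequality yields
\[
\left|(1-|w|^2)\frac{f_0''(w)}{f_0'(w)} - 2\bar w\right| \le 4. \qquad (\ast)
\]
Integrating $(\ast)$ along a radius gives the two bounds on $|f_0'(w)|$ in part~\ref{unoKoebe}, and integrating those in turn yields the bounds on $|f_0(w)|$.

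For part~\ref{dosKoebe}, the second inequality is obtained immediately by multiplying $(\ast)$ by $|w|/(1-|w|^2)$. The first inequality can be extracted either from a Koebe-transform computation applied to $\log(f_0(z)/z)$, or by observing that $f_0(w)/w$ belongs to the Schlicht class (after a suitable normalization) and invoking the classical estimate for the logarithmic derivative. For part~\ref{tresKoebe}, I would expand $f_0(w) = w + \sum_{n\ge 2} a_n w^n$; the bound $|a_n| \le n$ gives
\[
|f_0(w) - w| \le \sum_{n\ge 2} n |w|^n = \frac{|w|^2(2-|w|)}{(1-|w|)^2}.
\]
For the remaining inequality, write
\[
w\frac{f_0'(w)}{f_0(w)} - 1 \;=\; \frac{\sum_{n\ge 2}(n-1) a_n w^n}{f_0(w)},
\]
bound the numerator by $\sum_{n\ge 2} n(n-1)|w|^n = 2|w|^2/(1-|w|)^3$ using $|a_n|\le n$, and bound the denominator from below by $|w|/(1+|w|)^2$ using part~\ref{unoKoebe}.

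The main obstacle is not deep insight but constant-chasing: each inequality is classical, yet verifying the precise form of each bound requires careful bookkeeping through the integration steps. A secondary issue is that part~\ref{tresKoebe} invokes de Branges's theorem, which is a heavy hammer; this could in principle be avoided by iteratively integrating the derivative estimates from part~\ref{dosKoebe} along radii, at the cost of tracking explicit remainder terms.
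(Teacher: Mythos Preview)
Your proposal is correct and follows essentially the same route as the paper: parts~\ref{unoKoebe} and~\ref{dosKoebe} are handled by citing Pommerenke (whose proofs are precisely the Koebe-transform/Bieberbach argument you outline), and part~\ref{tresKoebe} is done exactly as you describe, via de Branges' bound $|a_n|\le n$ on the coefficients together with the lower bound on $|f_0(w)|$ from part~\ref{unoKoebe}. One small quibble: the aside that ``$f_0(w)/w$ belongs to the Schlicht class'' is not literally correct (it is $1$ at the origin, not $0$), but your alternative suggestion---working with $\log(f_0(z)/z)$---is the standard approach and suffices.
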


\begin{proof} Parts~\ref{unoKoebe} and~\ref{dosKoebe} are proved in \cite[Lemma~1.3 and Theorem~1.6]{Pommerenke75}.
Part~\ref{tresKoebe} is undoubtedly well-known but we provide a proof due to lack of suitable reference. Write $f_0(w)=\sum_{n=1 }^{\infty }a_nw^n.$ Then, $a_1=1$ and de Branges' theorem ensures that $|a_n|\leq n$ for all~$n$, see for example~\cite{Pommerenke75}.
Hence,

$$|f_0(w)-w|=\left|\sum_{n = 2}^{\infty} a_nw^n\right|\leq \sum_{n = 2}^{\infty}n |w|^n=\frac{|w|^2(2-|w|)}{(1-|w|)^2}.$$

Similarly,
$$|wf_0'(w)-f_0(w)|=\left|\sum_{n = 2}^{\infty} a_n(n-1)w^n\right|\leq |w|^2\sum_{n = 2}^{\infty} (n-1)n|w|^{n-2} =\frac{2|w|^{2}}{(1-|w|)^3}.  $$  
This estimate, combined with part~\ref{unoKoebe} finishes the proof.
\end{proof}

\subsection{Set up}
\label{ss:set up}

Since the $j$-invariant is injective when restricted to a fundamental domain, we aim to use Theorem~\ref{Koebe Thm} to deduce an approximation of it by a rational function  on a neighborhood of $\rho \coloneqq \frac{1 + i \sqrt{3}}{2}$.

In order to transport the situation to a disk, consider the function~$\psi \colon \D \to \H$ defined by

\begin{equation}\label{def psi}
 \psi(w)
\coloneqq
\frac{\overline{\rho} w + \rho}{w + 1},
\end{equation}

\noindent and let $\whf \colon \D
\to \C$ be the function defined by 
\begin{equation}\label{fgorro}
\whf \coloneqq j \circ \psi.
\end{equation}

Consider the following fundamental domain for the action of~$\SL_2(\Z)$ on~$\h$,
\begin{equation*}
T_0 \coloneqq \left\{ \tau \in \h : 0 \leq \Re (\tau)\leq \frac{1}{2}, \quad |\tau-1| > 1 \right\}
\setminus
\left\{it : 0<t\leq 1\right\}.
\end{equation*}

\begin{figure}[h]\label{mono}

\centering
\begin{tikzpicture}
 \draw [ultra thick] (0,0) -- (0.6,0);
 \node at (2,0)[scale=0.8] {$j\geq 1728$};
  \draw [densely dotted, thick] (0,1) -- (0.6,1);
 \node at (2,1)[scale=0.8] {$0 \leq j\leq 1728$};
  \draw [dashed] (0,2) -- (0.6,2);
 \node at (2,2)[scale=0.8] {$j\leq 0$};
 \draw  (0,3) -- (0.6,3);
 \node at (2,3)[scale=0.8] {$0 \leq j\leq 1728$ };
\fill[gray] (0.53,2.92) rectangle(0.67,3.08); 
\fill[black] (-0.08,2.94) rectangle(0.08,3.06); 
\node at (4,0) {$ $};
\end{tikzpicture}
\begin{tikzpicture}[scale=1.5]
\path(1,1) coordinate(Q);
\path (1,0) coordinate(P);
\path (60:1cm) coordinate(rho);

\draw (0,0) -- (1,0);
\draw (0,0) -- (0,3);
\draw (0.5,0) -- (rho);
\draw [dashed] (rho) -- (0.5,3);
\draw [ultra thick] (0,1) -- (0,3);

\draw (P) arc(0:60:1);
\draw (rho) [densely dotted, thick] arc(60:90:1);
\draw (Q) arc(90:180:1);

\path (60:1.2cm) coordinate(rhorho);
\node at (rhorho)  {$\rho$};

\fill[black] (60:0.93cm) rectangle(60:1.08cm);

\node at (0.2,1.1)  {$i$};
\fill[gray] (0,1) circle(1.3pt);

\node at (0.7,0.5)[scale=0.8]  {$\frac{1+i}{2} $};

\fill[gray] (0.45,0.45) rectangle(0.55,0.55);

\node at (0.25,2.2) {$*$};
\node at (0.25,1.5) {$*$};
\node at (0.2,0.83) {$\circ$};
\node at (0.1,0.62) {$\circ$};

\end{tikzpicture}
\begin{tikzpicture}[->,>=stealth',auto,node distance=1cm,
  thick,scale=1.5]
\node at (0,0) {};
  \node (1) at (0,1) {};
  \node (2) [right of=1] {$\psi$};
  \node (3) [right of=2] {};

  \path[every node/.style={font=\sffamily\small}]
    (3) edge[bend right] node [left] {} (1);
 
\end{tikzpicture}
\begin{tikzpicture}[scale=1.7]

\path (60:1cm) coordinate(R);
\path (-60:1cm) coordinate(Rbar);
\path (240:1cm) coordinate(RR);
\path (240:0.26794919243cm) coordinate(RM);
\path (0.26794919243cm,0) coordinate(rcero);

\draw (0,0) circle(1);
\draw (0,0) circle(0.26794919243cm);
\draw [dashed] (-1,0) -- (0,0);
\draw (0,0) -- (1,0);
\fill[black] (-0.03,-0.06) rectangle(0.03,0.06);

\draw (R) -- (0,0);
\draw [densely dotted, thick] (0,0) -- (RM);
\draw (RM) -- (RR);
\draw (0,0) -- (Rbar);

\coordinate (B) at (-1,0);
\arcThroughThreePoints{Rbar}{RM}{B};

\node[scale=0.8] at (210:0.2cm) {$*$};
\node[scale=0.8] at (-0.4,-0.06) {$*$};
\node[scale=0.8] at (270:0.18cm) {$\circ$};
\node[scale=0.8] at (290:0.35cm)  {$\circ$};

\path (60:1.3cm) coordinate(Runo);
\node at (Runo)  {$\rho$};
\fill[black] (R) circle(1.3pt);

\path (-60:1.3cm) coordinate(Rbaruno);
\node at (Rbaruno)  {$\overline{\rho}$};
\fill[black] (Rbar) circle(1.3pt);

\path (0.5cm,-0.2) coordinate(rcerouno);
\node at (rcerouno)  {$r_0$};
\fill[gray] (0.22,-0.05) rectangle(0.32,0.05);

\path (228:0.6cm) coordinate(RMuno);
\fill[gray] (RM) circle(1.3pt);

\end{tikzpicture}
\caption   {Action of $\psi$ on the fundamental region.}
\end{figure}
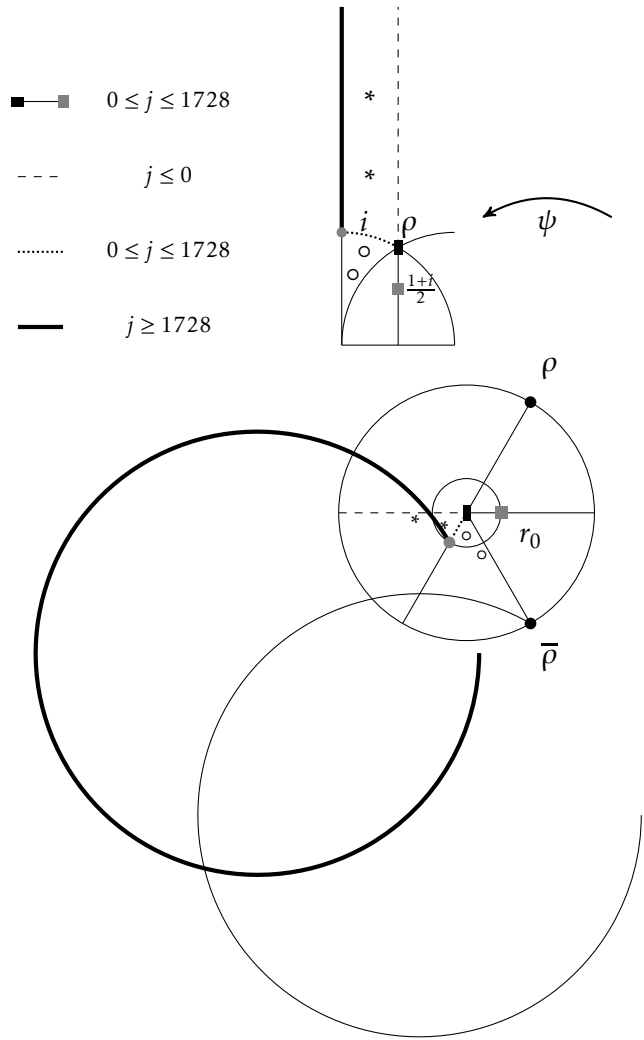

\begin{lemm}\label{dominio}
Define
$$ r_0 \coloneqq 2 - \sqrt{3}, \quad B(0,r_0)\coloneqq\{w \in \C: |w|<r_0\}, $$
and 
$$ B^*\coloneqq\{ z \in B(0,r_0) : \arg z \in [\pi, 5\pi/3)\}. $$ 
Then, we have that $\psi(B^*) \subseteq T_0$.
\end{lemm}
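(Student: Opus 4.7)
The plan is to verify each of the three defining conditions of $T_0$ --- the bounds $0\le\Re\tau\le\tfrac12$, the inequality $|\tau-1|>1$, and the exclusion $\tau\notin\{it:0<t\le 1\}$ --- separately at every image point $\tau=\psi(w)$ with $w\in B^*$. Each of them will reduce, after a short algebraic rearrangement of the Möbius transformation $\psi$, to an elementary inequality in $|w|$ and $\arg w$.

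\textbf{Real-part bounds.} Using $2\overline{\rho}-1=-i\sqrt 3$ and $2\rho-1=i\sqrt 3$, I plan to rewrite
\[
\psi(w)-\tfrac12=\frac{i\sqrt 3\,(1-w)}{2(w+1)},\qquad
\Re\psi(w)=\frac12+\frac{\sqrt 3\,\Im w}{|w+1|^2}.
\]
Since $\arg w\in[\pi,5\pi/3)$ on $B^*$, we have $\Im w\le 0$ and therefore $\Re\psi(w)\le 1/2$. The same formula shows that $\Re\psi(w)\ge 0$ is equivalent to $|w-(-1-i\sqrt 3)|\ge\sqrt 3$; since $|-1-i\sqrt 3|=2$ and $|w|<r_0=2-\sqrt 3$, the reverse triangle inequality gives the \emph{strict} inequality $|w-(-1-i\sqrt 3)|>\sqrt 3$, and hence $\Re\psi(w)>0$ throughout $B^*$.

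\textbf{The inequality $|\psi(w)-1|>1$.} Using $\overline{\rho}=1-\rho$, I rewrite
\[
\psi(w)-1=-\rho\cdot\frac{w+e^{-2\pi i/3}}{w+1},\qquad
|\psi(w)-1|=\frac{|w+e^{-2\pi i/3}|}{|w+1|}.
\]
Squaring and cancelling, $|\psi(w)-1|>1$ becomes $\Re\bigl(w(e^{2\pi i/3}-1)\bigr)>0$. Since $e^{2\pi i/3}-1=\sqrt 3\,e^{i 5\pi/6}$, this is the open half-plane condition $\arg w\in(2\pi/3,5\pi/3)\pmod{2\pi}$, which is clearly implied by $\arg w\in[\pi,5\pi/3)$. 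Finally, the strict positivity $\Re\psi(w)>0$ established in the first step shows that $\psi(B^*)$ is disjoint from the imaginary axis, so the segment $\{it:0<t\le 1\}$ is automatically avoided. Combining the three verifications proves $\psi(B^*)\subseteq T_0$.

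\textbf{Main difficulty.} There is no genuine obstacle beyond recognising the two clean identities for $\psi(w)-1/2$ and $\psi(w)-1$; once they are in hand, the three defining conditions become standard half-plane/disk inequalities on $w$. The only delicate point is the calibration of $r_0=2-\sqrt 3$: it is chosen so that $\overline{B(0,r_0)}$ is internally tangent to the circle $|w-(-1-i\sqrt 3)|=\sqrt 3$ at the single extremal point $w_0=r_0 e^{i 4\pi/3}$, and a direct computation shows $\psi(w_0)=i$. Thus this unique boundary point of $\psi(\overline{B(0,r_0)})$ coincides with the upper endpoint of the forbidden segment $\{it:0<t\le 1\}$, explaining simultaneously why $r_0$ cannot be enlarged and why that segment must be carved out in the definition of $T_0$.
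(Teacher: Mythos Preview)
Your proof is correct and takes a genuinely different route from the paper's. The paper works with the inverse map $\varphi=\psi^{-1}$ and argues geometrically: since $\varphi$ is a M\"obius transformation, it sends the three boundary pieces $L_0=\{it:t>0\}$, $L_1=\{\tfrac12+it:t\ge\tfrac{\sqrt3}{2}\}$, $C=\{|\tau-1|=1,\ 0\le\Re\tau\le\tfrac12\}$ of $T_0$ to explicit arcs/segments in~$\D$, namely $\varphi(L_1)=(-1,0]$, $\varphi(C)=\{t\overline{\rho}:0\le t<1\}$, and $\varphi(L_0)$ an arc through $\varphi(i)=-r_0\rho$; one then reads off $B^*\subseteq\varphi(T_0)$ from the picture.

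Your approach is instead a direct algebraic verification on the $w$-side: the identities $\psi(w)-\tfrac12=\frac{i\sqrt3(1-w)}{2(w+1)}$ and $\psi(w)-1=-\rho\,\frac{w+e^{-2\pi i/3}}{w+1}$ turn each defining condition of $T_0$ into an explicit half-plane or disk constraint on $w$. This is more elementary and self-contained---no appeal to the ``M\"obius maps send circles to circles'' principle is needed, and the strict inequality $\Re\psi(w)>0$ you obtain makes the exclusion of the segment $\{it:0<t\le1\}$ immediate rather than requiring a separate boundary analysis. The paper's geometric argument, by contrast, makes the shape of $\varphi(T_0)$ visually transparent (as in the accompanying figure) and shows more directly why $r_0$ and the angular sector $[\pi,5\pi/3)$ are exactly the right choices. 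Your closing remark recovers this calibration: the tangency of $\overline{B(0,r_0)}$ with the circle $|w+1+i\sqrt3|=\sqrt3$ at $w_0=r_0e^{4\pi i/3}$ and the identity $\psi(w_0)=i$ are the $w$-side counterparts of the paper's observation $\varphi(i)=-r_0\rho$.
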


\begin{proof} Let $\ph\colon \h \rightarrow \D$ be the inverse of the function $\psi$, which is given by $$\ph(\tau)\coloneqq-\frac{\tau-\rho}{\tau-\overline{\rho}}.$$ 

We show the equivalent assertion  $B^*\subseteq \ph(T_0)$. Since $\ph$ is a conformal mapping, it is enough to study the image of the boundary in $\h$ of $T_0$, which is the union of the three sets
$$L_0\coloneqq\{it: t >0\}, \quad L_1\coloneqq\left\{ \frac{1}{2} + it : t\geq \frac{\sqrt{3}}{2} \right\}, \quad C\coloneqq\left\{ \tau \in \h : |\tau-1|=1, 0 \leq \Re(\tau) \leq \frac{1}{2} \right\}.$$ 

Since $\ph$ is a M\"obius transformation, the three sets are sent into line or circle segments.
Noting that $\ph(\overline{\rho})=\infty$, we find

$$\ph(L_1)=(-1,0], \quad  \ph(C)=\{t\overline{\rho} : 0\leq t <1\}.$$ Let $R$ be the circle  that passes through the points $\{\overline{\rho}, \ph(i), -1\}$. Then, $\ph(L_0)$ is the open arc of $R$ that contains $\ph(i)$ and has  extreme points  $\overline{\rho}$ and -1.
 
A calculation shows that  $\ph(i)=-r_0\rho$. We conclude the proof by observing that $\arg(\overline{\rho})=\frac{5\pi}{3}.$
\end{proof}

Note that~$\psi(0) = \rho$, and that~$\whf \colon \D \to \C$ is
invariant under the rotation~$z \mapsto - \rho z$.
It follows that there is a holomorphic function~$f \colon \D \to \C$ such that for every~$w$
in~$\D$ we have
\begin{equation}\label{def f}
\whf(w) = f(w^3).
\end{equation}

\begin{lemm}\label{radio}
Let $r_0=2-\sqrt{3}$. The function~$f$ defined in~\eqref{def f} is
univalent on~$B \left( 0, r_0^3 \right)$. In addition, we have that $f'(0)=i\frac{\sqrt{3}}{2}\cdot j'''(\rho)= \left( \frac{\sqrt{3}}{\pi} \Gamma \left(\frac{1}{3} \right)^2 \right)^9$. In particular, $f'(0)$ is a real number and

 $$237698 \leq f'(0)\leq  237699.$$
\end{lemm}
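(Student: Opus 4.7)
I plan to treat the three claims in turn, reducing everything to results already established in Sections~\ref{Modular ingredients} and~\ref{Koebe}.

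For the univalence of $f$ on $B(0,r_{0}^{3})$, I would first observe that $w\mapsto w^{3}$ restricts to a bijection from $B^{*}\cup\{0\}$ onto $B(0,r_{0}^{3})$: the arguments of points in $B^{*}$ fill the half-open interval $[\pi,5\pi/3)$ of length exactly $2\pi/3$, and cubing triples this to cover the full circle once. By Lemma~\ref{dominio}, $\psi(B^{*})\subset T_{0}$, and since $T_{0}$ is a fundamental domain the $j$-invariant is injective on it. Hence $\widehat{f}=j\circ\psi$ is injective on $B^{*}$, and through the defining identity $\widehat{f}(w)=f(w^{3})$ this transports to injectivity of $f$ on $B(0,r_{0}^{3})\setminus\{0\}$. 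To extend this to the origin, I would use that $f(0)=j(\rho)=0$, while for any nonzero $w\in B^{*}$ the point $\psi(w)\in T_{0}$ is different from $\rho$; since $j$ vanishes on $\H$ only on the $\SL_{2}(\Z)$-orbit of $\rho$, this yields $f(w^{3})\neq 0$.

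To evaluate $f'(0)$, I plan to differentiate $\widehat{f}(w)=f(w^{3})$ three times at $w=0$, giving $\widehat{f}'''(0)=6f'(0)$, and then expand $\widehat{f}=j\circ\psi$ by the chain rule:
\[
\widehat{f}'''(0)=j'''(\rho)\,\psi'(0)^{3}+3\,j''(\rho)\,\psi'(0)\psi''(0)+j'(\rho)\,\psi'''(0).
\]
Since $\widehat{f}(w)=f(w^{3})$, the Taylor series of $\widehat{f}$ at the origin contains only powers $w^{3k}$, so $\widehat{f}'(0)=\widehat{f}''(0)=0$; combined with $\psi'(0)=\bar\rho-\rho=-i\sqrt{3}\neq 0$, this forces $j'(\rho)=j''(\rho)=0$, and only the first term on the right survives. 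Since $(-i\sqrt{3})^{3}=3i\sqrt{3}$, this yields $f'(0)=\tfrac{i\sqrt{3}}{2}\,j'''(\rho)$. Substituting the values of $j'''(\rho)$ and $E_{6}(\rho)$ from Lemma~\ref{Edos}, parts~\ref{jjota} and~\ref{eseis}, then gives $f'(0)=81\sqrt{3}\,\pi^{-9}\Gamma(1/3)^{18}=\bigl(\tfrac{\sqrt{3}}{\pi}\Gamma(1/3)^{2}\bigr)^{9}$, which is manifestly a positive real number.

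The numerical bound $237698\le f'(0)\le 237699$ then follows from this closed form, the standard approximation $\Gamma(1/3)=2.6789385347\ldots$, and straightforward interval arithmetic for the nine-fold product. No single step is genuinely difficult: the delicate points are the bijectivity of the cubing map on $B^{*}$ (handled by the choice of angular range) and the fact that $j$ has a triple zero at $\rho$ (handled automatically by the factorization $\widehat{f}(w)=f(w^{3})$).
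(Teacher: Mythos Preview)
Your proposal is correct and takes essentially the same approach as the paper: lift via cube roots into $B^{*}$, apply Lemma~\ref{dominio} and injectivity of $j$ on $T_{0}$ for univalence, then compute $f'(0)=\tfrac{1}{6}\whf'''(0)=\tfrac{1}{6}j'''(\rho)\psi'(0)^{3}$ using Lemma~\ref{Edos}. Your explicit handling of the origin and your derivation of $j'(\rho)=j''(\rho)=0$ from the cube structure of $\whf$ are nice touches that the paper leaves implicit.
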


\begin{proof}
We first show that $f$ is injective. Let $w_1,w_2\in B(0,r_0^3)$ be such that $f(w_1)=f(w_2)$. Choose $z_1, z_2 \in B(0,r_0)$ such that 
$$ z_i^3=w_i, \quad \arg z_i \in [\pi, 5\pi/3), \quad  i=1,2.$$

\noindent Then, we have that $\whf(z_1)=\whf(z_2)$, implying $$j\left( \psi (z_1)\right) = j\left( \psi (z_2)\right).$$ 
Since Lemma~\ref{dominio} ensures that $ \psi (z_1),  \psi (z_2) \in T_0$ and the $j$-invariant is injective on any fundamental domain, we conclude $ \psi (z_1)=  \psi (z_2)$, whence $z_1=z_2$, showing that $w_1=w_2.$

Using Lemma~\ref{valores especiales} and  $\psi'(0)=-i\sqrt{3}$, we find that 
\begin{multline}
f'(0)
=
\frac{1}{6} \whf'''(0)
=
\frac{1}{6} j'''(\rho) \psi'(0)^3
=
\frac{1}{6} \left(- 2^{10}3 \pi^3 i E_6(\rho) \right) (-i\sqrt{3})^3
\\ =
2^9 3\sqrt{3} \pi^3 E_6(\rho)
=
\left( \frac{\sqrt{3}}{\pi} \Gamma \left(\frac{1}{3} \right)^2 \right)^9
=
237698.411625786...  
\end{multline}
\end{proof}

Next we apply the distortions statements in Theorem~\ref{Koebe Thm} several times to~$f|_{B \left(0, r_0^3 \right)}$.
To normalize this function, throughout the rest of this section we put~$\varepsilon_1 \coloneqq \left( r_0^3 f'(0) \right)^{-1}$,
and let~$f_0 \colon \D \to \C$ be the function defined by
\begin{equation}
  \label{eq:11}
  f_0(z) \coloneqq \varepsilon_1 f(r_0^3 z).
\end{equation}
It is univalent and satisfies~$f_0(0) = 0$ and~$f_0'(0) = 1$.
\begin{lemm}\label{Mas Koebes}
For every~$z$ in~$\D$, we have the inequalities
 $$\left| \frac{f'}{f}(r_0^3z)r_0^3z \right| \leq \frac{1+|z|}{1-|z|}, \quad \left| \frac{f'}{f}(r_0^3z)r_0^3z -1 \right| \leq 2|z|\frac{(1+|z|)^2}{(1-|z|)^3}$$ 
 and
 $$\left| \frac{f''}{f}(r_0^3z)r_0^6z^2\right| \leq \frac{2|z|(2+|z|)}{(1-|z|)^2}.$$
\end{lemm}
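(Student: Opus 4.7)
The plan is to reduce everything to the normalized univalent function $f_0\colon \D\to\C$ introduced in~\eqref{eq:11} and then apply directly the estimates in Theorem~\ref{Koebe Thm}, parts~\ref{dosKoebe} and~\ref{tresKoebe}. Since $f_0(z)=\varepsilon_1 f(r_0^3 z)$ satisfies $f_0(0)=0$, $f_0'(0)=1$ and is univalent on~$\D$ (by Lemma~\ref{radio} together with the scaling $\varepsilon_1=(r_0^3 f'(0))^{-1}$), the hypotheses of Theorem~\ref{Koebe Thm} are satisfied.

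The first observation is that the logarithmic derivatives are insensitive to the scaling constant $\varepsilon_1$, so that for every $z\in\D$,
\begin{equation*}
\frac{f_0'}{f_0}(z)=r_0^3\,\frac{f'}{f}(r_0^3 z),\qquad
\frac{f_0''}{f_0'}(z)=r_0^3\,\frac{f''}{f'}(r_0^3 z).
\end{equation*}
Multiplying by $z$, the first identity gives $z\tfrac{f_0'}{f_0}(z)=r_0^3 z\,\tfrac{f'}{f}(r_0^3 z)$. The first bound of the lemma is then exactly the first bound of Theorem~\ref{Koebe Thm}\,\ref{dosKoebe}, and the second bound of the lemma is exactly the second bound of Theorem~\ref{Koebe Thm}\,\ref{tresKoebe}, applied to $f_0$.

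For the third bound, I would write the product factorization
\begin{equation*}
r_0^6 z^2\,\frac{f''}{f}(r_0^3 z)=\Bigl(r_0^3 z\,\tfrac{f''}{f'}(r_0^3 z)\Bigr)\cdot\Bigl(r_0^3 z\,\tfrac{f'}{f}(r_0^3 z)\Bigr)=\Bigl(z\,\tfrac{f_0''}{f_0'}(z)\Bigr)\cdot\Bigl(z\,\tfrac{f_0'}{f_0}(z)\Bigr).
\end{equation*}
The second bound of Theorem~\ref{Koebe Thm}\,\ref{dosKoebe} applied to $f_0$, combined with the triangle inequality, yields
\begin{equation*}
\left|z\,\tfrac{f_0''}{f_0'}(z)\right|\leq \frac{2|z|^2}{1-|z|^2}+\frac{4|z|}{1-|z|^2}=\frac{2|z|(|z|+2)}{(1-|z|)(1+|z|)},
\end{equation*}
while the first bound of Theorem~\ref{Koebe Thm}\,\ref{dosKoebe} gives $\bigl|z\,\tfrac{f_0'}{f_0}(z)\bigr|\leq \tfrac{1+|z|}{1-|z|}$. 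Taking the product of these two estimates, the factor $1+|z|$ cancels and we obtain $\tfrac{2|z|(2+|z|)}{(1-|z|)^2}$, as required.

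No step is really an obstacle here: the content of the lemma is just a bookkeeping exercise, and the only thing one has to be careful about is identifying the correct combination of the two distortion estimates of Theorem~\ref{Koebe Thm}\,\ref{dosKoebe} to recover the bound involving $f''/f$ rather than $f''/f'$. Note that the estimates depend only on $|z|$, so they transfer from $f_0$ to $f$ verbatim under the scaling $w\mapsto r_0^3 z$.
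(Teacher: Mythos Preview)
Your proof is correct and follows essentially the same route as the paper: reduce to the normalized univalent function~$f_0$, read off the first two inequalities directly from Theorem~\ref{Koebe Thm}\,\ref{dosKoebe} and~\ref{tresKoebe}, and for the third factor $f''/f=(f''/f')(f'/f)$ and multiply the two bounds from part~\ref{dosKoebe}. The only cosmetic difference is that the paper writes the triangle-inequality step for $|z f_0''/f_0'|$ in terms of $f$ rather than $f_0$, but the arithmetic and the logic are identical.
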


\begin{proof}
Note that the function~$f_0$ defined by~\eqref{def f} satisfies
$$\frac{f_0'}{f_0}(z)=r_0^3\frac{f'}{f}(r_0^3z)
\text{ and }
\frac{f_0''}{f_0'}(z)=r_0^3\frac{f''}{f'}(r_0^3z). $$
Hence, the first and second asserted inequalities are a direct consequence of Theorem~\ref{Koebe Thm}, \ref{dosKoebe} and~\ref{tresKoebe}. On the other hand, we can use Theorem~\ref{Koebe Thm}, \ref{dosKoebe}, again to obtain 

$$\left|r_0^3z\frac{f''}{f'}(r_0^3z)\right| \leq  \left|z\frac{f_0''}{f_0'}(z)-\frac{2|z|^2}{1-|z|^2}\right| + \frac{2|z|^2}{1-|z|^2}\le  \frac{2|z|(2+|z|)}{1-|z|^2}.$$

\noindent Then,
$$ \left| \frac{f''}{f}(r_0^3z)r_0^6z^2\right| = \left| r_0^3z \frac{f''}{f'}(r_0^3z)\cdot  \frac{f'}{f}(r_0^3z)   r_0^3z\right| \leq \frac{2(2+|z|)|z|}{(1-|z|)^2}, $$

\noindent as desired. 
\end{proof}

\subsection{Approximating the inverse of $j$ on a neighborhood of the unit disk}
\label{ss:j^-1}

Here, we provide estimates on $\whf^{-1}$ on a neighborhood of the unit circle.
To this end, for each~$\alpha$ in $\left(0, \frac{1}{4\eps_1} \right)$ denote by $\kappa(\alpha)$ the smallest solution~$x$ of 
$$ 1 + x
=
\alpha\left(1+(1+x)\eps_1\right)^2,$$
which is given explicitly by $$\kappa(\alpha)=\frac{1}{2\eps_1^2}\cdot \left( \frac{1}{\alpha} -2\eps_1(1+\eps_1)-\sqrt{\frac{1}{\alpha}\left(\frac{1}{\alpha}-4\eps_1\right)  } \right),$$
and put
$$r_+(\alpha) \coloneqq \left(\frac{1+\kappa(\alpha)}{f'(0)}\right)^{1/3},
\text{ and }
r_-(\alpha) \coloneqq (1-4\alpha\eps_1)^{1/3}r_+(\alpha). $$
In the rest of this section we denote
$$ \kappa_1\coloneqq \kappa(1), \quad r_1^+ \coloneqq r_+(1), \quad  r_1^- \coloneqq r_-(1). $$
Noting that
\begin{equation}\label{defkappa}
\kappa_1 =
2 \varepsilon_1 \left( \frac{2 + \varepsilon_1}{1 -
    2\varepsilon_1 - 2 \varepsilon_1^2 + \sqrt{1 - 4 \varepsilon_1}}
\right),
\end{equation}
and that by Lemma~\ref{radio}
\begin{equation}
  \label{eq:15}
  \frac{1}{4573}
\le
\varepsilon_1
\le
\frac{1}{4572},
\end{equation}
we have
\begin{equation}
  \label{eq:16}
2\varepsilon_1
\le
\kappa_1
\le
2 \varepsilon_1 (1 + 3 \varepsilon_1)
\le
\frac{1}{2284}.  
\end{equation}

\begin{lemm}\label{l:w uno} 
Let $w \in B(0,r_0)$ and put  $\zeta=\whf(w)$. Then, we have that $r_-(|\zeta|)\leq |w| \leq r_+(|\zeta|)$. In particular, if $|\whf(w)|=1$, then $r_1^- \le |w| \le r_1^+$.
\end{lemm}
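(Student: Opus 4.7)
The plan is to transport the question to the unit disk via the normalized univalent function $f_0$ of~\eqref{eq:11}, and then to squeeze $|w|$ using the two-sided Koebe estimates in Theorem~\ref{Koebe Thm}, part~\ref{unoKoebe}. Setting $z \coloneqq w^3/r_0^3$ puts $z \in \D$ and gives
$$ f_0(z) = \varepsilon_1 f(r_0^3 z) = \varepsilon_1 \whf(w) = \varepsilon_1 \zeta. $$
Writing $\alpha \coloneqq |\zeta|$ and recalling $r_0^3 f'(0) = 1/\varepsilon_1$, the desired inequalities $|w| \leq r_+(\alpha)$ and $|w| \geq r_-(\alpha)$ are equivalent, respectively, to
$$ |z| \leq \varepsilon_1(1+\kappa(\alpha)) \quad \text{and} \quad |z| \geq \varepsilon_1(1-4\alpha\varepsilon_1)(1+\kappa(\alpha)), $$
while Koebe's theorem yields the chain
$$ \frac{|z|}{(1+|z|)^2} \leq \alpha\varepsilon_1 \leq \frac{|z|}{(1-|z|)^2}. $$
So everything reduces to extracting these two bounds on $|z|$ from the above chain.

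For the upper bound on $|z|$, the left-hand Koebe inequality rewrites, after substituting $y \coloneqq |z|/\varepsilon_1$, as
$$ p(y) \coloneqq y - \alpha(1+\varepsilon_1 y)^2 \leq 0. $$
The polynomial $p$ is a downward parabola in $y$ with real roots $y_- \leq y_+$ whenever $\alpha \leq 1/(4\varepsilon_1)$, and by the very definition of $\kappa(\alpha)$ one verifies that $y_- = 1 + \kappa(\alpha)$. Consequently $p(y) \leq 0$ forces $y \leq y_-$ or $y \geq y_+$. A direct computation with $\sqrt{1-4\alpha\varepsilon_1}$ shows $y_+ \geq 1/\varepsilon_1$, whereas $w \in B(0,r_0)$ forces $|z| < 1$ and hence $y < 1/\varepsilon_1$; this rules out the second alternative and yields the upper bound $y \leq 1+\kappa(\alpha)$, as required.

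For the lower bound on $|z|$, the right-hand Koebe inequality reads $|z| \geq \alpha\varepsilon_1(1-|z|)^2$. Since $|z| \leq \varepsilon_1(1+\kappa(\alpha)) \le 1$ and the function $x \mapsto (1-x)^2$ is decreasing on $[0,1]$, this delivers
$$ |z| \geq \alpha\varepsilon_1 \bigl(1 - \varepsilon_1(1+\kappa(\alpha))\bigr)^2. $$
To conclude, I invoke the algebraic identity $(1-\varepsilon_1 Y)^2 = (1+\varepsilon_1 Y)^2 - 4\varepsilon_1 Y$ with $Y = 1+\kappa(\alpha)$, together with the defining equation $Y = \alpha(1+\varepsilon_1 Y)^2$; these immediately collapse the right-hand side above to $\varepsilon_1(1-4\alpha\varepsilon_1)(1+\kappa(\alpha))$, which is exactly the required lower bound. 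The ``in particular'' claim then follows by specializing $\alpha = 1$, noting that $1 < 1/(4\varepsilon_1)$ by~\eqref{eq:15}. The main subtlety of the argument is the root-selection step in the upper bound; the lower bound, though it looks delicate, boils down to the clean identity just described, which also explains the otherwise mysterious factor $(1-4\alpha\varepsilon_1)^{1/3}$ appearing in the definition of $r_-$.
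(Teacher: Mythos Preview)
Your proof is correct and follows the same overall strategy as the paper: normalize via $f_0$, apply the two-sided Koebe bounds from Theorem~\ref{Koebe Thm}\,\ref{unoKoebe}, and use the defining equation of $\kappa(\alpha)$ together with the identity $\alpha(1-\varepsilon_1 Y)^2=(1-4\alpha\varepsilon_1)Y$ for $Y=1+\kappa(\alpha)$ to land exactly on $r_\pm(\alpha)$. The lower-bound half of your argument is essentially identical to the paper's.

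For the upper bound you take a slightly different, more algebraic route: you apply the Koebe lower estimate directly at the point $z=w^3/r_0^3$, obtain the quadratic inequality $p(y)\le 0$ in $y=|z|/\varepsilon_1$, and then exclude the large root $y_+$ via $y_+\ge 1/\varepsilon_1>y$. The paper instead applies Koebe on the whole circle $|z|=(1+\kappa)\varepsilon_1$, shows that $|f_0|\ge \alpha\varepsilon_1$ there, and invokes the Jordan-curve picture of the univalent image to trap $w^3$ inside the smaller disk. Both arguments rest on the same inequality and the same defining relation for $\kappa$; your version simply trades the topological step for an explicit root-selection, which is a clean simplification. The only small point to make explicit is that your analysis presupposes $\alpha\in(0,\tfrac{1}{4\varepsilon_1})$, the range on which $\kappa(\alpha)$ and hence $r_\pm(\alpha)$ are defined; this is also implicit in the paper's formulation.
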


\begin{proof}
Applying Theorem~\ref{Koebe Thm}, \ref{unoKoebe},  to~$f_0$, it follows that for each~$z$ in~$\D$ satisfying~$|z| =
\left(1 + \kappa(|\zeta|)\right)\varepsilon_1$, we have by the definition of~$\kappa(\cdot)$
$$ \left| f \left( r_0^3z \right) \right|
=
\frac{|f_0(z)|}{\varepsilon_1}
\ge
\frac{1}{\varepsilon_1} \cdot \frac{|z|}{(1 + |z|)^{2}}
=
\frac{1 + \kappa(|\zeta|)}{\left(1 + \left(1 + \kappa(|\zeta|)\right)\varepsilon_1\right)^2}
=
|\zeta|. $$

Hence, the domain bounded by the Jordan curve $f\left( \partial B\left(0, \frac{1 + \kappa(|\zeta|)}{f'(0)}\right)  \right) $ contains $B(0,|\zeta|)$. Since $\zeta=f(w^3)$, it follows that~$w^3$ is in~$B\left( 0, \frac{1 + \kappa(|\zeta|)}{f'(0)}
\right)$. This proves the second desired inequality.

To prove the first inequality, we apply Theorem~\ref{Koebe Thm}, \ref{unoKoebe}, to~$f_0$
and~$z = \left(\frac{w}{r_0}\right)^3$. The inequality we have just proved implies $|z|\leq \left(1 + \kappa(|\zeta|)\right)\varepsilon_1$. Hence,  we obtain
$$ |\zeta|
=
\left|f \left( w^3 \right)\right|
=
\frac{1}{\varepsilon_1} \left| f_0 \left( \left( \frac{w}{r_0} \right)^3 \right) \right|
\le
\frac{1}{\varepsilon_1} \cdot \frac{|z|}{(1 - |z|)^2}
\le
\frac{f'(0)|w|^3}{\left(1 - \left(1 + \kappa(|\zeta|)\right)\varepsilon_1\right)^{2}}. $$

\noindent Then, by the definition of $\kappa(\cdot)$, $$|w|^3 \geq |\zeta| \frac{\left(1-\left(1+\kappa(|\zeta|)\right)\eps_1\right)^2}{f'(0)}=(1-4|\zeta|\eps_1)\cdot \frac{1+\kappa(|\zeta|)}{f'(0)}  .$$
This proves the first inequality, and completes the proof of the lemma.
\end{proof}

\begin{lemm}\label{zetaw}
Let $\zeta \in S^1$ and let $w \in B(0,r_0^3)$ be such that $\whf(w)=\zeta$. Then, we have that

\begin{enumerate}
 \item
\label{le:linear}
$|\zeta-f'(0)w^3|\leq \frac{\eps_1(1+\kappa_1)^2\left(2-\eps_1(1+\kappa_1)\right)}{\left(1-\eps_1(1+\kappa_1)\right)^2}\leq \frac{1}{2283}.$
 \item
\label{le:log}
$\left| \log(1 - |w|^2) - \log \left(1 - f'(0)^{-\frac{2}{3} } \right) \right|
\leq
\frac{2}{3}  f'(0)^{-\frac{2}{3} }\kappa_1\left( 1-(r_1^+)^2\right)^{-1}
\leq
7.7 \cdot 10^{-8}.$
\item
\label{le:j'}
$185 \leq |\whf'(w)| \leq 186.054.$
\end{enumerate}
 
\end{lemm}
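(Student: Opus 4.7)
The plan is to reduce each of the three estimates to the distortion theorems available for the normalized univalent function~$f_0 \colon \D \to \C$ of Section~\ref{ss:set up}, via the change of variable $u \coloneqq w^3/r_0^3$. Under this substitution, the hypothesis $|\zeta| = 1$ combined with Lemma~\ref{l:w uno} gives $r_1^- \le |w| \le r_1^+$, hence $(1 - 4\eps_1)(1 + \kappa_1)\eps_1 \le |u| \le (1+\kappa_1)\eps_1 < 1$, so the Koebe-type estimates apply.

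For part~\ref{le:linear}, the key observation is that $f_0(u) = \eps_1 f(w^3) = \eps_1 \zeta$, while $u = \eps_1 f'(0) w^3$ by the definition $\eps_1 = (r_0^3 f'(0))^{-1}$. These identities give $\zeta - f'(0) w^3 = \eps_1^{-1}(f_0(u) - u)$, and the first displayed bound then follows from Theorem~\ref{Koebe Thm}, part~\ref{tresKoebe}, applied at $u$ with $|u| \le (1+\kappa_1)\eps_1$. The numerical estimate $\leq 1/2283$ drops out by substituting the bounds in~\eqref{eq:15} and~\eqref{eq:16}.

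For part~\ref{le:log}, the plan is to use the mean value inequality
\begin{equation*}
\bigl|\log(1-a) - \log(1-b)\bigr| \le \frac{|a-b|}{\min(1-a,\, 1-b)}
\end{equation*}
with $a = |w|^2$ and $b = f'(0)^{-2/3}$. Both quantities are at most $(r_1^+)^2$, so the denominator is at least $1 - (r_1^+)^2$, and the task reduces to proving
\begin{equation*}
\bigl||w|^2 - f'(0)^{-2/3}\bigr| \le \tfrac{2}{3}\,\kappa_1\, f'(0)^{-2/3}.
\end{equation*}
The upper half $(r_1^+)^2 - f'(0)^{-2/3} \le \frac{2}{3}\kappa_1 f'(0)^{-2/3}$ is immediate from the concavity bound $(1+\kappa_1)^{2/3} \le 1 + \frac{2}{3}\kappa_1$. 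The lower half $f'(0)^{-2/3} - (r_1^-)^2 \le \frac{2}{3}\kappa_1 f'(0)^{-2/3}$ is the principal technical difficulty: I plan to use the defining relation $(1 + (1+\kappa_1)\eps_1)^2 = 1 + \kappa_1$, expanded as $\kappa_1 = 2(1+\kappa_1)\eps_1 + (1+\kappa_1)^2\eps_1^2$, to deduce $(1-4\eps_1)(1+\kappa_1) \ge 1 - \kappa_1$, and then apply the concavity bound $(1-t)^{3/2} \le 1 - \frac{3}{2}t$ on $[0,1]$ with $t = \frac{2}{3}\kappa_1$ to conclude $(1-4\eps_1)^{2/3}(1+\kappa_1)^{2/3} \ge 1 - \frac{2}{3}\kappa_1$, equivalently $(r_1^-)^2 \ge f'(0)^{-2/3}(1 - \frac{2}{3}\kappa_1)$. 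The numerical estimate $\le 7.7 \cdot 10^{-8}$ is then a direct substitution.

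For part~\ref{le:j'}, the plan is to start from $\whf'(w) = 3w^2 f'(w^3)$ and write
\begin{equation*}
|\whf'(w)| = \frac{3}{|w|}\,\bigl|w^3 f'(w^3)\bigr| = \frac{3}{|w|}\left|\frac{f'}{f}(r_0^3 u)\, r_0^3 u\right|,
\end{equation*}
where the last equality uses $|f(w^3)| = |\zeta| = 1$. The second bound of Lemma~\ref{Mas Koebes} shows that $\bigl|\tfrac{f'}{f}(r_0^3 u)\, r_0^3 u\bigr|$ differs from $1$ by at most $2|u|(1+|u|)^2/(1-|u|)^3$, which is of order $\eps_1$. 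Combining this with the two-sided control $r_1^- \le |w| \le r_1^+$ from Lemma~\ref{l:w uno} and the numerical values~\eqref{eq:15} and~\eqref{eq:16} yields the claimed two-sided bound on $|\whf'(w)|$. The hard part of the lemma is, as indicated above, securing the precise factor $\tfrac{2}{3}$ in part~\ref{le:log}, which forces the careful algebraic manipulation using the identity defining $\kappa_1$.
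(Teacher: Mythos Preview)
Your treatment of parts~\ref{le:linear} and~\ref{le:j'} is correct and essentially coincides with the paper's argument (the paper handles~\ref{le:j'} via $f_0'$ directly rather than via the logarithmic derivative $f'/f$, but the two are equivalent). Your upper half of part~\ref{le:log} is also fine and matches the paper.

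The gap is in the lower half of part~\ref{le:log}. The inequality you invoke, $(1-t)^{3/2} \le 1 - \tfrac{3}{2}t$ on $[0,1]$, is false: the function $t \mapsto (1-t)^{3/2}$ is \emph{convex} on $[0,1]$ (its second derivative is $\tfrac{3}{4}(1-t)^{-1/2} > 0$), so it lies \emph{above} its tangent line, giving $(1-t)^{3/2} \ge 1 - \tfrac{3}{2}t$. Equivalently, $(1-\kappa_1)^{2/3} \le 1 - \tfrac{2}{3}\kappa_1$ by concavity of $x^{2/3}$, which is the opposite of what your chain needs. Thus from $(1-4\eps_1)(1+\kappa_1) \ge 1-\kappa_1$ alone you cannot conclude $[(1-4\eps_1)(1+\kappa_1)]^{2/3} \ge 1 - \tfrac{2}{3}\kappa_1$: the concavity loss in passing to the $2/3$ power works against you.

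The target inequality is nonetheless true, but to see it you must use more of the defining relation. From $1+\kappa_1 = (1+(1+\kappa_1)\eps_1)^2$ one gets the exact identity
\[
(1-4\eps_1)(1+\kappa_1) = 1 - \kappa_1 + 2(1+\kappa_1)^2\eps_1^2,
\]
and the extra term $2(1+\kappa_1)^2\eps_1^2 \approx 2\eps_1^2$ is what compensates for the second-order concavity deficit $\tfrac{1}{6}\kappa_1^2 \approx \tfrac{2}{3}\eps_1^2$ in $(1-\tfrac{2}{3}\kappa_1)^{3/2} = 1 - \kappa_1 + \tfrac{1}{6}\kappa_1^2 + O(\kappa_1^3)$. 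Alternatively (and this is closer to what the paper does, albeit tersely), one can simply check that the lower deviation $f'(0)^{-2/3} - (r_1^-)^2$ is dominated by the upper deviation $(r_1^+)^2 - f'(0)^{-2/3}$, so that the single bound $f'(0)^{-2/3}((1+\kappa_1)^{2/3}-1) \le \tfrac{2}{3}\kappa_1 f'(0)^{-2/3}$ handles both directions at once.
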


\begin{proof}
 Set $z=\frac{w^3}{r_0^3}$.   From the definitions and using part~\ref{tresKoebe} of Theorem~\ref{Koebe Thm} we have that 
 
 $$ |\zeta-f'(0)w^3|=\frac{1}{\eps_1}|f_0(z)-z|\leq  \frac{|z|^2(2-|z|)}{\eps_1(1-|z|)^2}.  $$
By Lemma~\ref{l:w uno}, we have that $|z|\leq (1+\kappa_1)\eps_1$.
This estimate, \eqref{eq:16} and Lemma~\ref{radio} justify the first assertion.

In view of Lemma~\ref{l:w uno}, we have that 
\begin{align*}
 \left|\log(1-|w|^2)-\log(1-f'(0)^{-\frac{2}{3} })\right|  & \leq (|w|^2-f'(0)^{-\frac{2}{3} })\cdot \frac{1}{1-(r_1^+)^2} \\
& \leq f'(0)^{-\frac{2}{3} } \left( (1+\kappa_1)^{\frac{2}{3} }-1)\right) \cdot  \frac{1}{1-(r_1^+)^2} \\
& \leq \frac{2}{3}  f'(0)^{-\frac{2}{3} }\frac{\kappa_1}{ 1- (r_1^+)^2} .
\end{align*}

\noindent The second assertion is obtained by evaluating this last quantity.

Using the definition of~$\whf$, we have
$$ \whf'(w)=3w^2f'(w^3)=3w_1^2f'(0)f_0'(z). $$
Then, Theorem~\ref{Koebe Thm}, \ref{unoKoebe}, implies 

$$ \frac{1-|z| }{(1+|z|)^3}    \leq \left|\frac{\whf'(w)}{3w^2f'(0)} \right|\leq \frac{1+|z|}{(1-|z|)^3}. $$

\noindent Lemma~\ref{l:w uno}, ensures that $r_1^-\leq |w|\leq r_1^+.$ Using Lemma~\ref{radio} and~\eqref{defkappa}, we find 

$$|\whf'(w)|
\leq
3(r_1^+)^2 f'(0)\cdot \frac{1+\frac{(r_1^+)^3}{r_0^3} }{\left(1-\frac{(r_1^+)^3}{r_0^3}\right)^3}
=
\frac{3f'(0)^{1/3}(1+\kappa_1)^{2/3}}{(1-4\eps_1)(1-(1+\kappa_1)^2\eps_1^2)} \leq 186.$$ 
 A similar reasoning leads to the lower bound $|\whf'(w)|\geq 185$. 
\end{proof}

\section{Approximating $\gH$ on a neighborhood of the locus~$|j| \le 1$}
\label{s:approximation}
The aim of this section is to provide an approximation of~$\gH$ and of its first and second derivatives, on a suitable neighborhood of the locus~$|j| \le 1$.
This is stated as Proposition~\ref{p:approximation} below.
It is convenient to express this approximation in terms of the function $g_{\D} \colon \D \rightarrow \R$  given by 
\begin{equation}
\label{g en el disco}
g_{\D} \coloneqq \gH \circ \psi,
\end{equation}
where $\psi$ is defined in~\eqref{def psi}.
The approximation is also stated in terms of the derivative~$f'(0) = i \frac{\sqrt{3}}{2} \cdot j'''(\rho)$, computed in Lemma~\ref{radio}, and of the holomorphic function~$\whh \colon \D \to \C$ defined by
$$ \whh(w) \coloneqq \frac{\Delta \circ \psi(w)}{(1 + w)^{12}}. $$
Note that for every~$w$ in~$\D$ we have
$$ \Im(\psi(w))
=
\frac{\sqrt{3}}{2} \cdot \frac{1 - |w|^2}{|1 + w|^2}
$$
and
\begin{equation}\label{sobre gD}
g_{\D}(w)
=
- \log(1728 \pi^6) - 6 \log(1 - |w|^2) - \log|\whh(w)|.
\end{equation}

\begin{prop}
\label{p:approximation}
For every~$w$ in~$\D$ satisfying~$|w| \le 1 - \frac{\pi}{2 \sqrt{3}}$, we have
\begin{eqnarray}
\left| g_{\D}(w) - \left( g_{\D}(0) - 6 \log (1 - |w|^2) -
\frac{f'(0)}{13824} \Re(w^3) \right) \right|
&\le&
6^3 |w|^6, \label{l:approx} \\
\left| (\log \whh)' (w) w - \frac{3\cdot f'(0)}{13824} w^3\right| & \leq & 6^4|w|^6,  \label{quinto orden} \\
\left|  (\log \whh)''(w)w^2-\frac{6\cdot f'(0)}{13824} w^3 \right|& \leq &5 \cdot 6^4|w|^6. \label{cuarto orden}
\end{eqnarray}
\end{prop}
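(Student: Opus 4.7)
The plan is to reduce all three estimates to a Taylor approximation of $\log\whh$ at $w=0$ to order~$6$. Indeed, by formula~\eqref{sobre gD} the first inequality is equivalent to $\bigl|\log|\whh(w)|-\log|\whh(0)|-\tfrac{f'(0)}{13824}\Re(w^3)\bigr|\le 216\,|w|^6$, and \eqref{quinto orden}--\eqref{cuarto orden} are complex statements about $(\log\whh)'$ and $(\log\whh)''$. The first ingredient is a symmetry: the stabilizer of $\rho$ in $\SL_2(\Z)$ is generated by $\gamma=\begin{pmatrix}1 & -1\\ 1 & 0\end{pmatrix}$, and a short computation (analogous to the one in the proof of Lemma~\ref{dominio}) gives $\psi^{-1}\circ\gamma\circ\psi\colon w\mapsto -\rho w$. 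Combining the transformation law $\Delta(\gamma\tau)=\tau^{12}\Delta(\tau)$ with the identity $\overline{\rho}^{\,2}=-\rho$---which yields $\overline{\rho}\,w+\rho=\rho(1-\rho w)$ and hence $(\overline{\rho}\,w+\rho)^{12}=(1-\rho w)^{12}$---the weight-$12$ cocycle collapses to~$1$ and one obtains $\whh(-\rho w)=\whh(w)$. Therefore $\whh(w)=H(w^3)$ for a holomorphic non-vanishing $H\colon\D\to\C$, and $\log H(u)=\log\whh(0)+\sum_{n\ge 1}b_n u^n$.

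I next identify $b_1$. Using the standard identity $\Delta'/\Delta=2\pi i E_2$, I write
\[
(\log\whh)'(w)=2\pi i\,E_2(\psi(w))\,\psi'(w)-\frac{12}{1+w}
\]
and differentiate twice more. Substituting $\psi^{(k)}(0)=-i\sqrt 3,\;2i\sqrt 3,\;-6i\sqrt 3$ for $k=1,2,3$ together with the values of $E_2(\rho),E_2'(\rho),E_2''(\rho)$ furnished by Lemma~\ref{Edos}, a short direct computation---in which the non-holomorphic correction $-12/(1+w)$ precisely cancels the contributions of $E_2$ that would otherwise produce linear and quadratic terms---yields $(\log\whh)'(0)=(\log\whh)''(0)=0$ and
\[
(\log\whh)'''(0)=\frac{2\sqrt 3\,\pi^3}{3}E_6(\rho)=\frac{f'(0)}{2304},
\]
the last equality using $f'(0)=2^9\cdot 3\sqrt 3\,\pi^3\,E_6(\rho)$ from Lemma~\ref{radio}. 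Since $\log\whh-\log\whh(0)$ is a power series in~$w^3$, this forces $b_1=f'(0)/13824$, and in particular $b_1\in\R$; this is what produces the real-linear combination $\tfrac{f'(0)}{13824}\Re(w^3)$ in~\eqref{l:approx} after taking real parts.

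For the remainder, set $R(u)\coloneqq\log H(u)-\log H(0)-b_1 u$, which is holomorphic on~$\D$ with $R(0)=R'(0)=0$; hence $R(u)/u^2$ is holomorphic on~$\D$ and the maximum principle gives $|R(u)|\le \delta^{-2}|u|^2\sup_{|v|=\delta}|R(v)|$ for any $\delta<1$ and $|u|\le\delta$. I will choose $\delta$ slightly larger than $(1-\tfrac{\pi}{2\sqrt 3})^3$, so that the hypothesis $|w|\le 1-\tfrac{\pi}{2\sqrt 3}$ places $u=w^3$ strictly inside the disk of radius~$\delta$; bounding $\log|\whh|$ via the product-formula expression~\eqref{eq:6} for $\gH$ and controlling $\arg\whh$ by harmonic conjugation then yields~\eqref{l:approx} with constant~$6^3$. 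For~\eqref{quinto orden} and~\eqref{cuarto orden}, differentiating $\log\whh(w)=\log H(w^3)$ gives
\[
(\log\whh)'(w)\cdot w=3w^3(\log H)'(w^3),\qquad(\log\whh)''(w)\cdot w^2=6w^3(\log H)'(w^3)+9w^6(\log H)''(w^3),
\]
and the same Cauchy-type tail estimate applied to $(\log H)'(u)-b_1$ and $(\log H)''(u)$ produces the constants $6^4$ and $5\cdot 6^4$ after the combinatorial factors are collected. The main obstacle is the quantitative sup-bound of $\log H$ on $\{|u|\le\delta\}$: one must take $\delta$ large enough that the shrink factor $|u/\delta|^2/(1-|u/\delta|)$ is effective, yet small enough that $\Im\psi(w)$ stays bounded away from~$0$ and $|1+w|$ from~$0$, so that neither the cusp behavior of $\Delta$ nor the pole of $(1+w)^{-12}$ spoils the estimate; the specific radius $1-\tfrac{\pi}{2\sqrt 3}$ in the hypothesis is exactly what makes this balance work.
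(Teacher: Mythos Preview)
Your reduction of \eqref{l:approx} to a statement about $\log\whh$, the use of the order-three symmetry to write $\whh(w)=H(w^3)$, and the identification of the cubic Taylor coefficient as $f'(0)/13824$ are all correct and match Lemmas~\ref{l:holomorphic part} and~\ref{tercera} in the paper. The gap is in the remainder estimate.

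You propose to bound $R(u)=\log H(u)-\log H(0)-b_1u$ via the maximum principle on a disk of radius~$\delta$ ``slightly larger'' than $(1-\tfrac{\pi}{2\sqrt3})^3$, and then to obtain \eqref{quinto orden}--\eqref{cuarto orden} by Cauchy-type estimates on $R'$ and $R''$. But these Cauchy estimates carry a factor comparable to $(\delta-|u|)^{-1}$ (you yourself write the shrink factor as $|u/\delta|^2/(1-|u/\delta|)$), which blows up as $|u|\to\delta^-$. Since the proposition asks for bounds valid for \emph{all} $|w|\le 1-\tfrac{\pi}{2\sqrt3}$, i.e.\ for $|u|$ all the way up to $(1-\tfrac{\pi}{2\sqrt3})^3$, taking $\delta$ only ``slightly larger'' cannot give a uniform constant. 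Conversely, taking $\delta$ substantially larger forces you to control $\sup_{|v|=\delta}|R(v)|$ on a bigger circle, and you never actually carry out this sup bound: the sentence ``bounding $\log|\whh|$ via~\eqref{eq:6} and controlling $\arg\whh$ by harmonic conjugation then yields \eqref{l:approx} with constant $6^3$'' is an assertion, not a computation, and the harmonic-conjugate step (a Borel--Carath\'eodory-type inequality) introduces further constants you have not tracked. The closing claim that ``the specific radius $1-\tfrac{\pi}{2\sqrt3}$ is exactly what makes this balance work'' is unjustified.

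The paper avoids this trade-off entirely by bounding the sixth derivative $(\log\whh)^{(6)}(w)/6!$ directly and uniformly on the closed disk $|w|\le 1-\tfrac{\pi}{2\sqrt3}$ (Lemma~\ref{l:sixth order}), via an explicit Fa\`a di Bruno expansion of $(\ell\circ\psi\cdot\psi')^{(5)}$ with $\ell=\Delta'/\Delta=2\pi i E_2$, followed by term-by-term estimates using $|q\circ\psi|\le e^{-\sqrt3\pi}$. Taylor's theorem with Lagrange remainder then gives \eqref{l:approx} immediately, and the same sixth-derivative bound feeds into the fourth- and fifth-order Taylor expansions of $\alpha'(t)$ and $\alpha''(t)$ (where $\alpha(t)=\Re(\zeta\log\whh(tw))$) to produce \eqref{quinto orden} and \eqref{cuarto orden} with no loss near the boundary. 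This is where the real work of the proposition lies, and your proposal does not supply a substitute for it.
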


The proof of this proposition boils down to an estimate of sixth order derivative of the holomorphic function~$\log \whh$ (Lemma~\ref{l:sixth order}).
This is done in Section~\ref{ss:approximation}, after establishing some properties of the function~$\whh$ in Section~\ref{ss:decomposition}.

\subsection{Some properties of $\whh$}
\label{ss:decomposition}
Recall that a holomorphic function is \emph{real}, if it is defined on a connected domain that is invariant under complex conjugation and if the function commutes with complex conjugation.
\begin{lemm}\label{conjugaciones}
The functions~$\whf$ and~$\whh$ are both real.
\end{lemm}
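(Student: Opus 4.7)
My plan is to reduce both assertions to a single algebraic identity relating the M\"obius parametrization~$\psi$ to the antiholomorphic involution $\tau\mapsto -\overline{\tau}$ on~$\H$. The crucial observation is that
\[
-\overline{\psi(w)} = \psi(\overline{w}) - 1 \qquad \text{for every } w\in\D.
\]
I would verify this by a direct algebraic expansion, using only $\rho+\overline{\rho}=1$, equivalently $\rho-1=-\overline{\rho}$ and $\overline{\rho}-1=-\rho$. Substituting into the definition of $\psi$ gives
\[
\psi(\overline{w}) - 1 = \frac{(\overline{\rho}-1)\overline{w}+(\rho-1)}{\overline{w}+1} = \frac{-\rho\,\overline{w}-\overline{\rho}}{\overline{w}+1} = -\overline{\psi(w)}.
\]

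Once this is in hand, the realness of~$\whf$ and~$\whh$ on the connected, conjugation-invariant domain~$\D$ both drop out of the identities $\overline{j(\tau)}=j(-\overline{\tau})$ and $\overline{\Delta(\tau)}=\Delta(-\overline{\tau})$ already recorded in~\eqref{conju}, combined with the $T$-invariance $j(\tau-1)=j(\tau)$ and $\Delta(\tau-1)=\Delta(\tau)$. Concretely, for~$\whf$ I would write
\[
\overline{\whf(w)} = \overline{j(\psi(w))} = j(-\overline{\psi(w)}) = j(\psi(\overline{w})-1) = j(\psi(\overline{w})) = \whf(\overline{w}),
\]
and for $\whh$ the parallel chain
\[
\overline{\whh(w)} = \frac{\overline{\Delta(\psi(w))}}{(1+\overline{w})^{12}} = \frac{\Delta(-\overline{\psi(w)})}{(1+\overline{w})^{12}} = \frac{\Delta(\psi(\overline{w})-1)}{(1+\overline{w})^{12}} = \whh(\overline{w}),
\]
using in the last step that $(1+w)^{12}$ is manifestly real as a polynomial with real coefficients.

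I do not foresee any genuine obstacle here: once the $\psi$-conjugation identity is observed, the rest is a formal manipulation. The only minor subtlety worth flagging is the appearance of the translation by $-1$ in the identity for~$\psi$, which is what forces the use of $T$-invariance of~$j$ and~$\Delta$ rather than some more exotic modular transformation. Geometrically, this reflects the fact that the point $-\overline{\rho}=\psi(0)-1$ lies in the $T^{-1}$-translate of the standard fundamental domain rather than in the standard one itself.
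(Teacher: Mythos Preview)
Your proof is correct and follows essentially the same route as the paper: both hinge on the identity $-\overline{\psi(w)}=\psi(\overline{w})-1$, then apply~\eqref{conju} and the $1$-periodicity of~$j$ and~$\Delta$ to conclude. You simply spell out the verification of the $\psi$-identity and the use of $T$-invariance more explicitly than the paper does.
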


\begin{proof} A routine calculation shows that  $-\overline{\psi(w)}=\psi(\overline{w})-1$.  Using~\eqref{conju}, we obtain $$\overline{\whf(w)}=\overline{j \circ \psi (w)}=j\left(-\overline{\psi(w)}\right)=j(\psi(\overline{w})-1)=j\circ \psi (\overline{w}) = \whf(\overline{w}).$$

By the same argument, we have that 
$$\overline{\whh(w)} = \frac{\Delta \left(- \overline{\psi(w)}\right)}{(1 + \overline{w})^{12}}=\frac{\Delta \left( \psi(\overline{w})-1\right)}{(1 + \overline{w})^{12}}=\whh(\overline{w}).$$
\end{proof}

\begin{lemm}
\label{l:holomorphic part}
  There is a holomorphic function~$h \colon \D \to \C$, such that for
  each~$w$ in~$\D$ we have~$h ( w^3) = \whh(w)$.
\end{lemm}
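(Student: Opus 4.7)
The plan is to exhibit the invariance of $\whh$ under multiplication by a primitive cube root of unity in the disk, and then to define $h$ via the standard factorization through the cube map.

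First I would identify which element of $\SL_2(\Z)$ corresponds, via $\psi$, to the rotation $w \mapsto -\rho w$ (note $(-\rho)^3 = -\rho^3 = 1$, so $-\rho$ is a primitive cube root of unity, matching the order-$3$ stabilizer of $\rho$ in $\mathrm{PSL}_2(\Z)$). A direct computation using $\psi^{-1}(\tau)=-(\tau-\rho)/(\tau-\bar{\rho})$ gives
\[
 \psi(-\rho w) \;=\; \frac{\rho-w}{1-\rho w} \;=\; \gamma\,\psi(w), \qquad \gamma=\begin{pmatrix} 1 & -1 \\ 1 & 0 \end{pmatrix},
\]
where I use that $\bar{\rho}+\rho^{2}=0$ and $\rho\bar{\rho}=1$. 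One checks $\gamma\rho=1-1/\rho=1-\bar{\rho}=\rho$, so $\gamma$ fixes $\rho$.

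Next I invoke the modularity of $\Delta$. Since $\gamma=\left(\begin{smallmatrix}1&-1\\1&0\end{smallmatrix}\right)$ has $(c,d)=(1,0)$,
\[
 \Delta\bigl(\psi(-\rho w)\bigr) \;=\; \Delta\bigl(\gamma\,\psi(w)\bigr) \;=\; \psi(w)^{12}\,\Delta(\psi(w)).
\]
On the other hand, a direct algebraic identity (using $\bar{\rho}=-\rho^{2}$, equivalent to $\rho^{3}=-1$) gives
\[
 \psi(w)(1+w)\;=\;\bar{\rho}w+\rho\;=\;\rho(1-\rho w),
\]
and raising to the $12$th power, together with $\rho^{12}=1$, yields $\psi(w)^{12}(1+w)^{12}=(1-\rho w)^{12}$. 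Substituting into the definition of $\whh$,
\[
 \whh(-\rho w) \;=\; \frac{\Delta(\psi(-\rho w))}{(1-\rho w)^{12}} \;=\; \frac{\psi(w)^{12}\,\Delta(\psi(w))}{(1-\rho w)^{12}} \;=\; \frac{\Delta(\psi(w))}{(1+w)^{12}} \;=\; \whh(w).
\]

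Finally, writing the Taylor expansion $\whh(w)=\sum_{n\ge 0}a_{n}w^{n}$, the invariance $\whh(-\rho w)=\whh(w)$ forces $a_{n}=(-\rho)^{n}a_{n}$, hence $a_{n}=0$ whenever $3\nmid n$. Therefore $\whh(w)=\sum_{k\ge 0}a_{3k}w^{3k}$, and the holomorphic function $h(z)\coloneqq\sum_{k\ge 0}a_{3k}z^{k}$ on $\D$ satisfies $h(w^{3})=\whh(w)$, as required.

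The only real obstacle is the bookkeeping in the two short identities $\psi(-\rho w)=\gamma\psi(w)$ and $\psi(w)(1+w)=\rho(1-\rho w)$; once these are in hand the rest is automatic from the modular transformation law of $\Delta$.
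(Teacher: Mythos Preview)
Your proof is correct and follows essentially the same route as the paper: both arguments reduce to showing $\whh(-\rho w)=\whh(w)$ via the modular transformation law of~$\Delta$, using that $\psi(-\rho w)=(\tau-1)/\tau$ with $\tau=\psi(w)$ (equivalently, your matrix $\gamma$). Your version makes the final Taylor-coefficient step explicit where the paper simply asserts that the rotation invariance suffices.
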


\begin{proof} it is enough to show that $h$ is invariant under the rotation $w \mapsto -\rho w$. Fix~$w$ in~$\D$ and put~$\tau \coloneqq \psi(w)$.
Noting that~$\psi(- \rho w) = \frac{\tau -
  1}{\tau}$, and using that~$\Delta$ is a modular form of weight~$12$, we have
$$ \whh(-\rho w)
=
\frac{\Delta \circ \psi(- \rho w)}{(1 - \rho w)^{12}}
=
\frac{\Delta \left( \frac{\tau - 1}{\tau} \right)}{(1 - \rho w)^{12}}
=
\Delta (\tau) \left(\frac{\tau}{1 - \rho w} \right)^{12}
=
\Delta \circ \psi(w) \left( \frac{\rho}{1 + w} \right)^{12}
=
\whh(w).  $$
\end{proof}

\begin{lemm}\label{tercera}
 We have
 
\begin{enumerate}
 \item \label{dercero} $(-\log \whh)'(0)=(-\log \whh)''(0)=(-\log \whh)^{(4)}(0)= (-\log \whh)^{(5)}(0)=0$
 \item $\frac{ (-\log \whh)'''(0)}{3!}=-i\frac{\sqrt{3}}{2}\cdot \frac{j'''(\rho)}{13824}=    -\frac{f'(0)}{13824}. $
\end{enumerate}

\end{lemm}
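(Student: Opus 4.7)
The plan for part~(1) is to use the $\mathbb{Z}/3\mathbb{Z}$-symmetry of $\whh$ established in Lemma~\ref{l:holomorphic part}. Since $\whh(w)=h(w^3)$ with $h$ holomorphic and $\whh(0)=\Delta(\rho)\neq 0$ (Lemma~\ref{Edos}\ref{ddelta}), the function $\log\whh$ is well-defined and holomorphic on a neighborhood of $0$, and equal to $\log h(w^3)$ there. In particular, its Taylor series at $0$ involves only powers $w^{3k}$, so every derivative at $0$ whose order is not divisible by $3$ vanishes. This immediately yields the vanishing of the first, second, fourth, and fifth derivatives.

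For part~(2), the plan is to compute $(\log\whh)'''(0)$ directly from the formula $\log\whh(w)=\log\Delta(\psi(w))-12\log(1+w)$ using the chain rule and the special values already recorded in Section~\ref{eisenstein}. Setting $\phi(\tau)\coloneqq\log\Delta(\tau)$, the product formula for $\Delta$ (or equivalently the computation leading to~\eqref{eq:7}) gives $\phi'(\tau)=2\pi i E_2(\tau)$. Differentiating this twice and using the Ramanujan identities~\eqref{Ramanujan}, one obtains
\begin{equation*}
\phi''=-\tfrac{\pi^2}{3}(E_2^2-E_4),\qquad \phi'''=-\tfrac{\pi^2}{3}(2E_2 E_2'-E_4').
\end{equation*}
Evaluating at $\rho$ using Lemma~\ref{Edos}\ref{primero}, $E_4(\rho)=0$ (shown in the proof of that lemma), and $E_4'(\rho)=-\tfrac{2\pi i}{3}E_6(\rho)$ (from~\eqref{Ramanujan}), gives closed forms for $\phi'(\rho),\phi''(\rho),\phi'''(\rho)$ in terms of $E_6(\rho)$.

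Next I would compute the needed derivatives of $\psi$ at $0$ from \eqref{def psi}: $\psi'(0)=\bar\rho-\rho=-i\sqrt{3}$, $\psi''(0)=2i\sqrt{3}$, and $\psi'''(0)=-6i\sqrt{3}$. Applying Faà di Bruno to $\phi\circ\psi$ through third order at $w=0$ and subtracting the contribution $(12\log(1+w))'''(0)=24$, the terms that are independent of $E_6(\rho)$ are designed to cancel (mirroring the vanishing in~(1) for first and second derivatives, which I would double-check along the way as a sanity check), leaving
\begin{equation*}
(\log\whh)'''(0)=\tfrac{2\pi^3\sqrt{3}}{3}\,E_6(\rho).
\end{equation*}
Dividing by $3!=6$ and comparing with the formula $f'(0)=2^9\cdot 3\sqrt{3}\,\pi^3 E_6(\rho)$ from Lemma~\ref{radio}, and noting $13824=2^9\cdot 27$, gives $(\log\whh)'''(0)/3!=f'(0)/13824$. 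The second equality in the statement is then just the identity $f'(0)=i\tfrac{\sqrt{3}}{2}j'''(\rho)$ from Lemma~\ref{radio}.

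The main obstacle is not conceptual but bookkeeping: one must carry out the Faà di Bruno expansion and the cancellation of the $E_6$-independent terms cleanly. A useful check is that the computation also reproduces $(\log\whh)'(0)=(\log\whh)''(0)=0$, which is already forced by part~(1); any discrepancy there would signal an arithmetic error before reaching the third-order step.
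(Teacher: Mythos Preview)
Your proposal is correct and follows essentially the same approach as the paper: part~(1) uses the $\mathbb{Z}/3\mathbb{Z}$-symmetry from Lemma~\ref{l:holomorphic part}, and part~(2) differentiates $\log\whh(w)=\log\Delta(\psi(w))-12\log(1+w)$ and evaluates using the special values in Lemma~\ref{Edos}. The only cosmetic difference is that the paper first multiplies through by $(1+w)^2$ before differentiating (which slightly streamlines the algebra and routes the computation through $E_2''(\rho)$ directly), whereas you apply Fa\`a di Bruno; the cancellation of the $E_6$-independent terms and the final identification with $f'(0)/13824$ are identical.
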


\begin{proof} The function $\whh$ does not vanish on $\D$, hence we can choose a branch of the logarithm such that $K_0(w)\coloneqq -\log \whh(w)$ is holomorphic. Lemma~\ref{l:holomorphic part}, ensures that there is a holomorphic function  $K_1(w)$ such that $K_0(w)=K_1(w^3)$. This justifies the first part of the assertion. On the other hand, we have that 

$$K_0'(w) = \frac{12}{1+w} - \left(\frac{\Delta'}{\Delta}\circ \psi(w)\right)\cdot \psi'(w).$$
Note that by~\eqref{integral} and the equation $\partial \gH(\tau)=\frac{3i}{\Im(\tau)} -\frac{1}{2}\cdot \frac{\Delta'(\tau)}{\Delta(\tau)}$,
$$   \frac{\Delta'}{\Delta} = 2\pi i E_2\quad \text{ and }\quad \psi'(w) = -\frac{i\sqrt{3}}{(1+w)^2},$$
We conclude that

$$(1+w)^2K_0'(w) =2 \sqrt{3}\pi \left(\frac{2 \sqrt{3}}{\pi}(1+w) -E_2\circ \psi(w) \right).$$

\noindent Taking holomorphic derivative, we get

$$2(1+w)K_0'(w) +(1+w)^2K_0''(w)=2 \sqrt{3}\pi \left(\frac{2 \sqrt{3}}{\pi} -E_2'\circ \psi(w) \cdot \psi'(w)\right),$$
implying

$$2(1+w)^3K_0'(w) +(1+w)^4K_0''(w)=6\pi \left(\frac{2 }{\pi}(1+w)^2 +iE_2'\circ \psi(w) \right).$$

\noindent Taking holomorphic derivative once more, we get

\begin{equation*}
\begin{split}
6(1+w)^2K_0'(w)+2(1+w)^3K_0''(w)+4(1+w)^3K_0''(w)+(1+w)^4K_0'''(w) \\
=6 \pi \left(\frac{4}{\pi}(1+w) + \frac{\sqrt{3}}{(1+w)^2}E_2''\circ \psi(w) \right).
\end{split}
\end{equation*}

\noindent Setting $w=0$, we obtain

$$K_0'''(0)=24 + 6\sqrt{3}\pi E_2''(\rho).$$

\noindent Then, using Lemma~\ref{Edos}, \ref{primero} and~\ref{eseis}, and Lemma~\ref{radio}, we conclude the proof.
\end{proof}

\subsection{Approximating $g_\D$}
\label{ss:approximation}
In this section we give the proof of Proposition~\ref{p:approximation}.
The following is the main ingredient.

\begin{lemm}
\label{l:sixth order}
For every~$w$ in~$\D$ satisfying~$|w| \le 1 - \frac{\pi}{2 \sqrt{3}}$,
we have
$$ \frac{\left| (\log \whh)^{(6)}(w) \right|}{6!}
\le
6^3. $$
\end{lemm}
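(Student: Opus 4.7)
The approach is to exploit the factorization $\log\whh(w) = H(w^3)$ with $H \coloneqq \log h$, provided by Lemma~\ref{l:holomorphic part}, so that the sixth derivative of $\log\whh$ at $w$ reduces to the first few derivatives of $H$ at a point $z = w^3$ of very small modulus, which can then be estimated by Cauchy's integral formula on a fixed mid-sized disk well inside $\D$.

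The first step is to apply Fa\`a di Bruno to $\log\whh = H \circ p$ with $p(w) = w^3$. Since $p'(w) = 3w^2$, $p''(w) = 6w$, $p'''(w) = 6$ and $p^{(k)}(w) = 0$ for $k \ge 4$, only the partial Bell polynomials $B_{6,k}$ with $k \ge 2$ contribute, and a direct computation yields
\begin{displaymath}
(\log\whh)^{(6)}(w)
=
360\,H''(z) + 9720\,z\,H'''(z) + 17820\,z^2 H^{(4)}(z) + 7290\,z^3 H^{(5)}(z) + 729\,z^4 H^{(6)}(z),
\end{displaymath}
where $z = w^3$. The hypothesis $|w| \le 1 - \pi/(2\sqrt{3})$ implies $|z| \le (1 - \pi/(2\sqrt{3}))^3 < 10^{-3}$, so the terms carrying positive powers of $z$ are sharply damped, and the overall estimate is dominated by the term $360\,H''(z)$.

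To bound each $|H^{(k)}(z)|$ for $k = 2, \ldots, 6$, I would apply Cauchy's integral formula to $H$ on a disk $\overline{B(z,s)} \subset \D$ of fixed radius $s$ (say $s = 1/3$). Since $H$ is only defined up to $2\pi i \Z$, I would use the real-part version of Cauchy's formula, which for $n \ge 1$ gives
\begin{displaymath}
|H^{(n)}(z)| \le \frac{n!}{s^n} \operatorname{osc}_{|\zeta - z| = s} \log|h(\zeta)|.
\end{displaymath}
The oscillation of $\log|h|$ is controlled via $h(\zeta) = \whh(w')$ for $(w')^3 = \zeta$ together with formula~\eqref{sobre gD}: $\log|\whh(w')| = -g_{\D}(w') - 6\log(1-|w'|^2) - \log(1728\pi^6)$. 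Here $g_{\D}(w') = g_{\infty}(\psi(w'))$ is bounded below by $g_{\infty}(\rho) = g_\D(0)$ (the global minimum of $g_\infty$ established in Section~\ref{valor minimo}), and bounded above via the expansion~\eqref{eq:6} of $g_{\infty}$ combined with the lower bound $\Im\psi(w') = \frac{\sqrt{3}}{2}(1-|w'|^2)/|1+w'|^2$, which keeps $|q(\psi(w'))|$ uniformly away from $1$ and renders the series $\sum_r \log|1 - q^r|$ absolutely and uniformly convergent on the chosen disks.

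The main obstacle is the sharpness of the oscillation bound on $\log|h|$: a crude application of the triangle inequality to the $q$-series or a sup bound on $|\log|h||$ produces a constant larger than $6^3$, so one must exploit that $g_\D$ stays close to its minimum $g_\D(0)$ throughout the relevant region, making the oscillation of $\log|h|$ substantially smaller than twice $\sup|\log|h||$. Once this refined oscillation bound is in hand, assembling the contributions of the five terms in the Fa\`a di Bruno expansion (with the high-order terms heavily suppressed by powers of $|z|$) yields the required estimate $|(\log\whh)^{(6)}(w)|/6! \le 6^3$.
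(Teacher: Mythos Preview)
Your Fa\`a di Bruno reduction is correct and is a genuinely different route from the paper's. The paper never passes through $H=\log h$; instead it writes $(\log\whh)' = (\Delta'/\Delta)\circ\psi \cdot \psi' - 12/(1+w)$, differentiates five more times using the explicit $q$-expansion $\Delta'/\Delta = 2\pi i - 48\pi i\sum_{n\ge1}\sigma_1(n)q^n$ together with $\psi^{(k)}(w) = (-i\sqrt{3})\,k!\,(-(1+w))^{-k-1}$, and obtains a closed formula for $(\log\whh)^{(6)}(w)/6!$ as an explicit $q$-series (equation~\eqref{eq:13}). After using the six-fold symmetry of $|\whh|$ to assume $\psi(w)\in T$ (hence $|q\circ\psi(w)|\le e^{-\sqrt{3}\pi}<1/216$), each term of the series is estimated by hand with the divisor bound $n^5\sigma_1(n)\le 3^5\cdot 4\cdot 14^{n-3}$ (Lemma~\ref{l:divisors estimate}). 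So the paper trades your Cauchy step for an explicit term-by-term series bound.

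The gap in your argument is the oscillation bound itself, and the intuition you give for it is incorrect. On the circle $|\zeta-z|=1/3$ the cube roots have $|w'|\approx 0.69$, and $g_\D$ is \emph{not} close to its minimum there: for instance at $w'=-0.69$ one has $\psi(w')\approx \tfrac12+4.8i\in T$ and $g_\D(w')\approx 5.4$, whereas at $w'=0.69$ the $T$-representative of $\psi(w')$ has imaginary part about $1.6$ and $g_\D(w')\approx -8$. So $g_\D$ already swings by roughly $13$ on that circle. Your Cauchy scheme with $s=1/3$ needs $\operatorname{osc}\log|h|\lesssim 19$ to land under $6^3$ (and note the $9720\,z\,H'''(z)$ term is not negligible: it consumes about a fifth of the remaining budget), so the approach may well be salvageable, but verifying $\operatorname{osc}\le 19$ rigorously requires controlling $g_\infty$ at points of $T$ with $\Im\tau$ up to about $5$, which is work of the same order as the paper's direct estimate. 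You have correctly located the obstacle but have not overcome it, and the mechanism you propose (``$g_\D$ stays close to its minimum'') does not apply at radius $1/3$.
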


The proof of this lemma is given after the following one.

\begin{lemm}
\label{l:divisors estimate}
For every integer~$n \ge 4$, we have
$$ n^5 \sigma_1(n)
\le
\left( 3^5 \cdot 4 \right) 14^{n - 3}. $$
\end{lemm}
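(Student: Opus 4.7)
The plan is to reduce to a purely polynomial-vs-exponential inequality by bounding $\sigma_1$ crudely, and then dispatch the resulting inequality by a one-step induction on~$n$.

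First, I would use the trivial bound
\[
\sigma_1(n) \;=\; \sum_{d \mid n} d \;\le\; \sum_{d=1}^{n} d \;=\; \frac{n(n+1)}{2},
\]
which yields $n^5 \sigma_1(n) \le n^6(n+1)/2$. It therefore suffices to prove the cleaner inequality
\begin{equation}\label{eq:plan-aux}
\frac{n^6(n+1)}{2} \;\le\; 3^5 \cdot 4 \cdot 14^{\,n-3} \qquad \text{for every } n \ge 4.
\end{equation}
I would prove \eqref{eq:plan-aux} by induction on~$n$. The base case $n=4$ is a direct check: the left side equals $4^{6}\cdot 5/2 = 10240$, while the right side equals $972 \cdot 14 = 13608$.

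For the inductive step, I would compare consecutive ratios of the two sides. The right side of \eqref{eq:plan-aux} is multiplied by exactly~$14$ when $n$ is replaced by $n+1$, whereas the left side is multiplied by
\[
\frac{(n+1)^6(n+2)/2}{n^6(n+1)/2} \;=\; \left(1 + \tfrac{1}{n}\right)^{\!6} \cdot \frac{n+2}{n+1}.
\]
Both factors are decreasing in~$n$, so for every $n \ge 4$ this ratio is bounded above by its value at $n=4$, namely
\[
\left(\tfrac{5}{4}\right)^{6}\cdot \tfrac{6}{5} \;=\; \frac{5^{5}\cdot 6}{4^{6}} \;=\; \frac{18750}{4096} \;<\; 5,
\]
which is comfortably below $14$. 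Hence \eqref{eq:plan-aux} propagates from~$n$ to~$n+1$, completing the induction and proving the lemma.

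The argument is entirely elementary and I do not foresee a real obstacle; the only point that needs attention is the choice of base case. The bound actually fails at $n=2$ (there $n^5 \sigma_1(n) = 96$, while $3^5 \cdot 4 \cdot 14^{-1} < 70$), which is exactly why the lemma is stated for $n \ge 4$. Starting the induction at $n=4$ avoids this single anomaly and the decreasing behavior of the ratio then does all the work.
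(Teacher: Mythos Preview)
Your proof is correct and follows essentially the same approach as the paper: bound $\sigma_1(n)$ by $n(n+1)/2$, verify the base case $n=4$, and propagate the inequality by comparing the consecutive ratio $(1+1/n)^6(n+2)/(n+1)$ against $14$. Your computation of that ratio at $n=4$ is in fact cleaner than the paper's, which contains a minor typo in the exponent.
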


\begin{proof} Since~$\sigma_1(n) \le \frac{n(n + 1)}{2}$, it is enough to prove that
for every~$n \ge 4$ we have
$$ n^6(n +1)
\le
\left( 3^5 \cdot 8 \right) 14^{n - 3}. $$
We proceed by induction.
The case~$n = 4$ can be readily verified.
For the induction step, just note that for each~$n \ge 4$ we have
$$ \frac{(n + 1)^5 (n + 2)}{n^5 (n + 1)}
\le
\frac{5^5 \cdot 6}{4^5 \cdot 5} \le 14.$$
\end{proof}

\begin{proof}[Proof of Lemma~\ref{l:sixth order}.]
Let~$\ell \colon \H \to \C$ be the function defined by
$$ \ell(\tau)
\coloneqq
\left(\frac{\Delta'}{\Delta}\right)(\tau)
=
2\pi i - 48 \pi i \sum_{n = 1}^{\infty} \sigma_1(n) q^n, \quad q=q(\tau)=e^{2\pi i\tau}. $$
Note that for every integer~$k \ge 1$ we have
$$ \ell^{(k)}
=
- 48 \pi i \sum_{n = 1}^{\infty} (2\pi i n)^k \sigma_1(n)q^n, $$
and
$$ \psi^{(k)}(w)
=
(- i \sqrt{3}) k! \left( - \frac{1}{1 + w} \right)^{k + 1}. $$
Defining the complex polynomial
$$ P(\zeta)
\coloneqq
1 - 30 \zeta + 300 \zeta^2 - 1200 \zeta^3 + 1800 \zeta^4 - 720
\zeta^5, $$
and using the formula
\begin{displaymath}
  \begin{split}
(\ell \circ \psi \cdot \psi')^{(5)}
& =
\ell^{(5)} \circ \psi \cdot (\psi')^6
+ 15 \ell^{(4)} \circ \psi \cdot (\psi')^4 \psi''
\\ & \quad
+ 45 \ell''' \circ \psi \cdot (\psi')^2(\psi'')^2
+ 20 \ell''' \circ \psi \cdot (\psi')^3 \psi'''
\\ & \quad
+ 60 \ell'' \circ \psi \cdot \psi' \psi'' \psi'''
+ 15 \ell'' \circ \psi \cdot (\psi'')^3
+ 15 \ell'' \circ \psi \cdot (\psi')^2 \psi^{(4)}
\\ & \quad
+ 10 \ell' \circ \psi \cdot (\psi''')^2
+ 15 \ell' \circ \psi \cdot \psi'' \psi^{(4)}
+ 6 \ell' \circ \psi \cdot \psi' \psi^{(5)}
\\ & \quad
+ \ell \circ \psi \cdot \psi^{(6)},
  \end{split}
\end{displaymath}
we have
\begin{equation}
  \label{eq:13}
  \begin{split}
\frac{(\log \whh)^{(6)}(w)}{6!}
&
= \frac{1}{6!} \left(\frac{\whh'}{\whh} \right)^{(5)} (w)
\\ & =
\frac{2}{(1 + w)^6}
+ \frac{(\ell \circ \psi \cdot \psi')^{(5)}(w)}{6!}
\\ & =
\frac{2}{(1 + w)^6} - \frac{2\sqrt{3} \pi}{(1 + w)^7}
\\ & \quad
- \frac{2^5 3^2 \pi^6}{5 (1 + w)^{12}} \sum_{n = 1}^{\infty} P
\left( \frac{1 + w}{2 \sqrt{3} \pi n} \right) n^5 \sigma_1(n) (q \circ \psi(w))^n.
  \end{split}
\end{equation}

Fix~$w_0$ in~$\D$ satisfying~$|w_0| \le 1 - \frac{\pi}{2 \sqrt{3}}$.
Since~$g_{\D}$ is real and invariant under the rotation~$w \mapsto -
\rho w$, it is enough to consider the case where~$\arg(w_0)$ is
in~$\left[\frac{\pi}{2}, \frac{\pi}{3} \right]$.
This last condition implies that~$\psi(w_0)$ is in~$T$, so, if we
put~$q_0 \coloneqq \exp \left(-\sqrt{3}\pi \right)$,
then~$\left| q \circ \psi(w_0) \right| \le q_0$.
On the other hand, noting that
$$ P(\zeta)
=
(1 - 10 \zeta)^3 - 200(1 - 10 \zeta)(1 + \zeta)\zeta^3 - 2720 \zeta^5, $$
$$ \left| \frac{1 + w_0}{2 \sqrt{3} \pi} \right|
\le
\frac{1}{9}, \quad
\left| 1 - 10 \left( \frac{1 + w_0}{2 \sqrt{3} \pi} \right) \right|
\le
\frac{1}{6}, \quad
\left| 1 - 10 \left( \frac{1 + w_0}{4 \sqrt{3} \pi} \right) \right|
\le
\frac{7}{12}, $$
and that for every integer~$n \ge 3$ we have
\begin{displaymath}
\left| 1 - 10 \left( \frac{1 + w_0}{2 \sqrt{3} \pi n} \right) \right|
\le
1,
\end{displaymath}
we obtain
\begin{equation}
  \label{eq:8}
\left| P \left( \frac{1 + w_0}{2 \sqrt{3} \pi} \right) \right|
\le
\frac{2}{19}, \quad
\left| P \left( \frac{1 + w_0}{4 \sqrt{3} \pi} \right) \right|
\le
\frac{2}{9},
\end{equation}
and for every integer~$n \ge 3$
\begin{displaymath}
\left| P \left( \frac{1 + w_0}{2 \sqrt{3} \pi n} \right) \right|
\le
\frac{91}{90}.  
\end{displaymath}
Together with Lemma~\ref{l:divisors estimate},
\begin{equation}
  \label{eq:1}
|q \circ \psi(w_0)| \le q_0
\text{ and }
  q_0^{-1}
=
\exp \left( \sqrt{3} \pi \right)
>
6^3,
\end{equation}
the last inequality implies
\begin{multline*}
\left| \sum_{n = 3}^{\infty} P \left( \frac{1 + w}{2 \sqrt{3} \pi n}
  \right) n^5 \sigma_1(n) (q \circ \psi(w_0))^n \right|
\le
\frac{91}{90} \sum_{n = 3}^{\infty} n^5 \sigma_1(n) q_0^n
\\ \le
\frac{91}{90} \cdot 3^5 \cdot 4 q_0^3 \frac{14}{13}
=
\frac{2^2 3^3 7^2}{5} q_0^3
\le
\frac{7^2}{2^73^6 \cdot 5}
\le
\frac{5}{2^63^6}.
\end{multline*}
Combined with~\eqref{eq:13}, \eqref{eq:8}, \eqref{eq:1}, and the
inequality~$\frac{\pi}{2} > 15^{\frac{1}{6}}$, this implies
\begin{equation*}
  \begin{split}
    \left| \frac{(\log \whh)^{(6)}(w_0)}{6!} \right|
& \le
\frac{2(2 \sqrt{3})^6}{\pi^6} + \frac{2 \sqrt{3} \pi (2 \sqrt{3})^7}{\pi^7}
\\ & \quad
+ \frac{2^{5} 3^2 \pi^6 (2 \sqrt{3})^{12}}{5 \pi^{12}} \left(
  \frac{2}{19} q_0 + \frac{2}{9} \cdot 2^5 \cdot 3 q_0^2 +
  \frac{5}{2^6 3^6} \right)
\\ & \le
\frac{1728}{\pi^6} \left(
14 + \frac{2^{11} 3^5}{5}
\left( \frac{1}{2^23^3 \cdot 19} + \frac{1}{3^7}
+ \frac{5}{2^6 3^6} \right) \right).
\\ & =
6^3 \cdot \frac{8}{\pi^6} \left(
14 + \frac{2^9 3^2}{5 \cdot 19} + \frac{2^{11}}{3^2 \cdot 5} + \frac{2^5}{3} \right).
\\ & \le
6^3 \frac{8}{\pi^6} \left(
14 + 49 + 46 + 11 \right)
\\ & =
6^3 \left(\frac{2^6 \cdot 15}{\pi^6}\right)
\\ & \le
6^3.   
  \end{split}
\end{equation*}
\end{proof}

\begin{proof}[Proof of Proposition~\ref{p:approximation}]  Inequality~\eqref{l:approx} is equivalent to the assertion

\begin{equation}\label{estimacion}
\left|-\log |\whh(w)|+\log |\Delta(\rho)| +\frac{f'(0)}{13824}\Re(w^3)\right| \leq 6^3|w|^6,  \textrm{ for all } |w| \le 1 - \frac{\pi}{2 \sqrt{3}}.
\end{equation}

Fix $w$ with $0< |w| \le 1 - \frac{\pi}{2 \sqrt{3}}$ and $\zeta \in \ce$ satisfying $|\zeta|=1$. The function $\whh$ does not vanish on $\D$, hence we can choose a branch of the logarithm such that $K_0(w)\coloneqq -\log \whh(w)$ is holomorphic. Let $\alpha\colon(-\varepsilon,1+\varepsilon) \to \R$ be given by $\alpha(t)=\Re \left(\zeta K_0(tw)\right)$. The function $\alpha$ is well defined and smooth if $\varepsilon>0$ is small enough.  We have that $\alpha^{(n)}(t)=\Re \left(\zeta K_0^{(n)}(tw)w^n\right)$ for all $n\geq 0$. 

Using Lemma~\ref{tercera}, \ref{dercero}, and the sixth order Taylor expansion of $\alpha$  we have that

\begin{equation}\label{TVM}
\Re\left(\zeta K_0(w)\right)=\alpha(1)=\alpha(0)  + \frac{1}{3!}\alpha'''(0)  + \frac{1}{6!}\alpha^{(6)}(t^*), 
\end{equation}

\noindent for some  $ 0\leq t^*\leq 1$. Then by Lemma~\ref{l:sixth order}

$$\frac{1}{6!}|\alpha^{(6)}(t^*)|\leq \frac{1}{6!}|K_0^{(6)}(t^*w)|\cdot |w|^6 = \left| \frac{(\log \whh)^{(6)}(t^*w)}{6!} \right|  \cdot |w|^6 \leq 6^3|w|^6.$$

By Lemma~\ref{Edos},\ref{ddelta}, the quantity $\Delta(\rho)$ is a nonzero real number. Then, we have that $$K_0(0)=-\log |\Delta(\rho)|, \quad  \alpha(0)=\Re(-\zeta\log |\Delta(\rho)|).$$ 

\noindent On the other hand $\alpha'''(0)=\Re\left(\zeta K_0'''(0)w^3\right)$, so Lemma~\ref{tercera}   implies~\eqref{estimacion}.

Since $\Re \left[ \zeta(\log \whh)''(w)w^2\right] = -\alpha''(1)$, the same argument, applied all possible $\zeta$ and to the fourth order Taylor expansion of $-\alpha''(\cdot)$, allows us to prove~\eqref{cuarto orden}. Similarly, the identity $\Re \left[\zeta(\log \whh)' (w) w\right]  = -\alpha'(1)$ enables us to use the fifth order Taylor expansion of $-\alpha'(\cdot)$ in order to prove~\eqref{quinto orden}.
\end{proof}

\section{Numerical estimates}\label{numerical}
In this section we prove Theorem~\ref{t:roots of unity}, Corollary~\ref{c:numerical}, and Proposition~\ref{zetacontraw}.
The proof of Proposition~\ref{zetacontraw} is given in Section~\ref{ss:roots of unity}, where we also estimate the values of~$\Fh$ taken at the roots of unity (Corollary~\ref{estimaciones especiales}).
The proofs of Theorem~\ref{t:roots of unity} and Corollary~\ref{c:numerical} are given in Section~\ref{ss:upper bound}.
The main ingredient, besides those developed in the previous sections, is a general way to find upper bounds for the essential minimum (Proposition~\ref{p:upper bound}).
This leads us to make a numerical estimate of the integral~$\ghyp$ over a certain translate of the unit circle.

In the rest of this section we denote by~$\mu$ (resp.~$\phi$) the classical M{\"o}bius (resp. Euler's totient) function.

\subsection{Approximating $\ghyp$ on the unit circle}
\label{ss:roots of unity}
In this section we combine the distortion estimates in Section~\ref{Koebe} with the estimates from Section~\ref{s:approximation} to prove Proposition~\ref{zetacontraw}.
As a consequence, we obtain approximations of values of~$\Fh$ at roots of unity (Corollaries~\ref{estimaciones especiales}). 

\begin{proof}[Proof of Proposition~\ref{zetacontraw}]
Note that by~\eqref{gmin} and Lemma~\ref{radio} we have
\begin{equation}
  \label{eq:10}
  \gamma_1
=
g_{\D}(0) - 6 \log \left(1 - f'(0)^{-\frac{2}{3}} \right).
\end{equation}
So, if $w \in B(0,r_0)$ is such that $\whf(w)=\zeta$, then by Lemma~\ref{l:w uno}, Lemma~\ref{zetaw}, \ref{le:linear} and~\ref{le:log}, and~\eqref{l:approx}, we have
\begin{multline*}
\left| \ghyp(\zeta) - \left( \gamma_1 - \frac{\Re(\zeta)}{13824} \right) \right|
\\
\begin{aligned}
& =
\left|g_\D(w) -\left( g_{\D}(0)-6\log\left(1-f'(0)^{-\frac{2}{3}}\right)- \frac{\Re(\zeta)}{13824}\right) \right|
\\ & \leq
6^3(r_1^+)^6 +6\left|\log(1-|w|^2)-\log\left(1-f'(0)^{-\frac{2}{3} }\right)\right| +\frac{1}{13824}\left|\Re(\zeta)-\Re(w^3)f'(0)\right|
\\ & \leq
6^3\cdot \left(\frac{1 + \kappa_1}{f'(0)} \right)^2  + 6\cdot 7.7 \cdot 10^{-8} +\frac{1}{13824\cdot 2283}
\\ & \leq
5\cdot 10^{-7}.
\end{aligned}
\end{multline*}
\end{proof}

\begin{coro}
\label{estimaciones especiales}
For every integer~$n \ge 1$ and every primitive root of unity~$\zeta_n$ of order~$n$, we have
$$ -0.7486222078 - \frac{1}{165888} \cdot \frac{\mu(n)}{\phi(n)}
\le
\Fh(\zeta_n)
\le
-0.7486221244 - \frac{1}{165888}  \cdot \frac{\mu(n)}{\phi(n)}.$$
In particular,  $-0.748628236 \le \Fh(1) \le -0.748628152$.
\end{coro}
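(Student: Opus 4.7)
The plan is to start from the adelic formula~\eqref{e:global/local} applied to $\alpha = \zeta_{n}$. Since $\zeta_{n}$ is a root of unity, hence an algebraic unit, every Galois conjugate $\zeta'$ of $\zeta_{n}$ satisfies $|\zeta'|_{p} = 1$ for every finite prime $p$, so the non-archimedean double sum in~\eqref{e:global/local} vanishes identically. The Galois orbit $\cO(\zeta_{n})$ is precisely the set of primitive $n$-th roots of unity and has cardinality $\phi(n)$, so I will rewrite
\[
\Fh(\zeta_{n})
=
\frac{1}{12\,\phi(n)} \sum_{\zeta} \ghyp(\zeta),
\]
where the sum ranges over the primitive $n$-th roots of unity.

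Each such $\zeta$ lies on $S^{1}$, so I can substitute the uniform approximation provided by Proposition~\ref{zetacontraw},
\[
\left|\ghyp(\zeta) - \left(\gamma_{1} - \tfrac{\Re(\zeta)}{13824}\right)\right| \le 5 \cdot 10^{-7}.
\]
Summing over primitive $n$-th roots and invoking the classical identity $\sum_{\zeta} \zeta = \mu(n)$ (whence $\sum_{\zeta}\Re(\zeta) = \mu(n)$, since $\mu(n) \in \{-1,0,1\}$), then dividing by $12\phi(n)$, I will obtain
\[
\left|\Fh(\zeta_{n}) - \frac{\gamma_{1}}{12} + \frac{1}{12 \cdot 13824}\cdot\frac{\mu(n)}{\phi(n)}\right|
\le
\frac{5 \cdot 10^{-7}}{12}
< 4.17 \cdot 10^{-8}.
\]
Since $12 \cdot 13824 = 165888$, this matches exactly the shape of the inequality to be proved, and the right-hand side is half the width of the target interval.

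All that remains is to evaluate
\[
\frac{\gamma_{1}}{12}
=
\frac{1}{4}\log 192 - \frac{1}{2}\log\!\left(\gamma_{0}^{3} - \gamma_{0}^{-3}\right)
\]
numerically to uncertainty well below $4.17 \cdot 10^{-8}$. This is a routine high-precision computation from a sufficiently accurate value of $\Gamma(1/3)$; I expect the result to be approximately $-0.7486221661$, so that the bound of the previous paragraph places $\Fh(\zeta_{n}) + \frac{1}{165888}\cdot\frac{\mu(n)}{\phi(n)}$ in the interval $[-0.7486222078,\,-0.7486221244]$, which is exactly the stated enclosure. The ``in particular'' statement follows by specialising to $n=1$: there $\mu(1) = \phi(1) = 1$, so one subtracts $\frac{1}{165888} \approx 6.028 \cdot 10^{-6}$ from both endpoints to recover the interval $[-0.748628236,\,-0.748628152]$ for $\Fh(1)$. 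The only step requiring care is carrying enough decimal digits through the numerical evaluation of $\gamma_{1}$, but this is purely computational; since Proposition~\ref{zetacontraw} has already absorbed the analytic difficulty, no genuine obstacle is expected.
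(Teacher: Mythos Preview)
Your proposal is correct and follows essentially the same approach as the paper: use~\eqref{e:global/local} together with the vanishing of the non-archimedean contribution (the paper invokes only that $\zeta_n$ is an algebraic integer, which already suffices for $\log^+|\alpha'|_p=0$), apply Proposition~\ref{zetacontraw} to each Galois conjugate, and sum using the Ramanujan-sum identity $\sum_{\zeta\in\cO(\zeta_n)}\zeta=\mu(n)$. The paper omits the explicit numerical evaluation of $\gamma_1/12$ that you spell out, but otherwise the arguments are identical.
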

\begin{proof}
Since~$\zeta_n$ is an algebraic integer, by~\eqref{e:global/local} we have
$$ \Fh(\zeta_n)
=
\frac{1}{12 \cdot \phi(n)} \sum_{\zeta \in \cO(\zeta_n)} \ghyp(\zeta). $$ 
Thus, in view of the identity
\begin{displaymath}
\sum_{\zeta \in \cO(\zeta_n) } \zeta = \mu (n),
\end{displaymath}
the corollary is a direct consequence of Proposition~\ref{zetacontraw}.
\end{proof}

\subsection{Estimating the essential minimum from above}
\label{ss:upper bound}
We use the following criterion to estimate the essential minimum~$\Fmuess$ from above.
It is stated for the height~$\Fh$ and the section~$s = \Delta$, but it is clearly valid for general heights and sections.
The proof is based on the classical Fekete-Szeg{\"o} theorem and an equidistribution result shown in~\cite{BPRS}.

\begin{prop}
\label{p:upper bound}
Let~$K$ be a compact subset of~$\C$ that is invariant under complex conjugation and whose logarithmic capacity is equal to~$1$, and denote by~$\rho_K$ its equilibrium measure.
Then there is a sequence of pairwise distinct algebraic integers~$(p_l)_{l \ge 1}$ such that
$$ \lim_{l \to \infty} \Fh(p_l)
=
\frac{1}{12} \int \ghyp \dd \rho_K. $$
In particular, $\Fmuess \le \frac{1}{12} \int \ghyp \dd \rho_K$.
\end{prop}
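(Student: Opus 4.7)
The strategy is to produce Fekete--type algebraic integers whose Galois orbits equidistribute to $\rho_K$, and then to exploit the fact that for algebraic integers the non-archimedean part of the Faltings height vanishes identically. Under this scheme, the height $\Fh$ of each such integer is computed by integrating $\frac{1}{12}\ghyp$ against its Galois orbit measure, and the result follows by weak convergence.

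First, I would appeal to the Fekete--Szeg\H{o} theorem in the equidistributional form provided by~\cite{BPRS}. Since $K$ is compact, invariant under complex conjugation, and has logarithmic capacity equal to~$1$, this produces a sequence $(p_l)_{l \ge 1}$ of pairwise distinct algebraic integers with $\deg(p_l) \to \infty$, whose Galois orbits lie in arbitrarily small neighborhoods of $K$ and whose associated orbit measures
\[
\nu_l \coloneqq \frac{1}{\#\cO(p_l)} \sum_{p \in \cO(p_l)} \delta_p
\]
converge weakly to the equilibrium measure $\rho_K$. Next, since each $p_l$ is an algebraic integer, every Galois conjugate $p$ of $p_l$ and every finite place $v$ of $\Q(p_l)$ satisfy $|p|_v \le 1$, hence $\log^+|p|_v = 0$. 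Substituting this in formula~\eqref{e:global/local} yields
\[
\Fh(p_l) = \frac{1}{12} \int \ghyp \, d\nu_l.
\]
All $\nu_l$ are supported in a fixed compact neighborhood $V$ of $K$, and $\ghyp$ is continuous on $\C$ (being the descent of the $\SL_2(\Z)$-invariant continuous function $g_\infty$ through the quotient map $j \colon \SL_2(\Z) \backslash \h \to \C$). The weak convergence $\nu_l \to \rho_K$ therefore forces
\[
\lim_{l \to \infty} \Fh(p_l) = \frac{1}{12} \int \ghyp \, d\rho_K.
\]
The upper bound $\Fmuess \le \frac{1}{12}\int \ghyp \, d\rho_K$ is then immediate: for every $\varepsilon > 0$, all but finitely many $p_l$ satisfy $\Fh(p_l) \le \frac{1}{12}\int \ghyp \, d\rho_K + \varepsilon$, and since the $p_l$ are pairwise distinct the corresponding sublevel set is infinite; letting $\varepsilon \to 0$ concludes.

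The main obstacle is identifying, in the statement of the equidistribution result of~\cite{BPRS}, the precise output needed here: not merely the classical Fekete--Szeg\H{o} statement that infinitely many algebraic integers have all conjugates inside a prescribed neighborhood of $K$, but the stronger assertion that one can choose such a sequence so that the orbit measures converge weakly to $\rho_K$ itself. Once this is granted the remaining steps are routine; a minor subtlety is that $\ghyp$ grows like $\log|z|$ at infinity, but this is harmless because the supports of the $\nu_l$ are contained in the fixed compact set $V$, on which $\ghyp$ is bounded and continuous, so weak convergence applies directly.
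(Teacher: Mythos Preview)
Your proposal is correct and follows essentially the same approach as the paper: produce algebraic integers via Fekete--Szeg\H{o}, observe that the non-archimedean contribution in~\eqref{e:global/local} vanishes, invoke the equidistribution result of~\cite{BPRS} to get weak convergence of the orbit measures to~$\rho_K$, and pass to the limit using continuity of~$\ghyp$. The only minor difference is that the paper separates the two ingredients explicitly---it first cites the classical Fekete--Szeg\H{o} theorem~\cite{FekSze55} to obtain the sequence with conjugates in shrinking neighborhoods of~$K$, and then applies \cite[Proposition~7.4]{BPRS} (to the adelic set with~$E_\infty = K$ and unit balls at all finite places) for the equidistribution---rather than packaging both as a single ``equidistributional Fekete--Szeg\H{o}'' statement.
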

\begin{proof}
Denote by~$\fM_{\Q} = \{ \text{prime numbers} \} \cup \{ \infty \}$ the set of places of~$\Q$ and for each prime number~$p$ denote by~$\C_p$ the completion of~$(\overline{\Q}, | \cdot |_p)$.
Furthermore, for a point~$\zeta$ in~$\C$, denote by~$\delta_\zeta$ the Dirac mass at~$\zeta$.

By the Fekete-Szeg{\"o} theorem there is a sequence of pairwise distinct algebraic integers~$(p_l)_{l \ge 1}$ such that for each~$l \ge 1$ the set~$\cO(p_l)$ is contained in
$$ \left\{ \zeta \in \C : \text{ there is $\zeta_0 \in K$ such that } |\zeta - \zeta_0| \le 1/l \right\}, $$
see~\cite{FekSze55}.
Note that by~\eqref{e:global/local}, for each~$l \ge 1$ we have
$$ \Fh(p_l)
=
\frac{1}{12 \# \cO(p_l)} \sum_{q \in \cO(p_l)} \ghyp(q). $$
On the other hand, applying \cite[Proposition~7.4]{BPRS} to the closed bounded adelic set formed by~$E_\infty = K$ and for every prime number~$p$ by the unit ball in~$\C_p$, we have that the measure
$$ \frac{1}{\# \cO(p_l)} \sum_{q \in \cO(p_l)} \delta_q $$
converges to~$\rho_K$ in the weak* topology as~$l \to \infty$.
Since the function~$\ghyp$ is continuous, this implies the proposition.
\end{proof}

To obtain a numerical upper bound of~$\Fmuess$, we apply the previous criterion with~$K$ equal to a translate of the unit circle by a real number~$a$.
Our numerical experiments, described in Section~\ref{sec:numer-exper} and in~\cite{BMRan}, suggest that the best choice for the center is~$a = 0.205$, which is what we use in the proof of Corollary~\ref{c:numerical}.
First we give a formula for the corresponding integral.

\begin{lemm}\label{formula integral}
 For a given $t \in [0,1]$ and $a \in (0,2)$, let $w_a(t) \in B(0,r_0)$ be the only complex number with argument in $\left[\pi, \frac{5}{3}\pi \right)$ such that $\whf\left(w_a(t)\right)=a+e^{2\pi i t}$ (\emph{cf}. Figure~\ref{mono}). Similarly, let $s_a \in (0,r_0)$ be the only element  such that $\whf(s_a)=a$. Then,
\begin{equation}\label{formula integrante}
\int_0^1\ghyp \left( a+e^{2\pi it} \right) \dd t
=
- \log (1728 \pi^6) - \log |\whh(s_a)| - 6\int_0^1 \log \left(1 - |w_a(t)|^2 \right) \dd t.
\end{equation}
\end{lemm}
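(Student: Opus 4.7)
The plan is to use the factorization $g_\infty = g_\D \circ \psi^{-1}$ together with the explicit formula~\eqref{sobre gD}, reducing the claim to a mean value identity for the harmonic function $\log|\whh|$ along the curve traced by $w_a(t)$.

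First, I would combine $g_\infty = \ghyp \circ j$ with $\whf = j \circ \psi$ to get $\ghyp \circ \whf = g_\D$. Substituting $w = w_a(t)$ and using~\eqref{sobre gD} yields
$$ \ghyp\bigl(a + e^{2\pi i t}\bigr) = -\log(1728 \pi^6) - 6 \log\bigl(1 - |w_a(t)|^2\bigr) - \log|\whh(w_a(t))|. $$
Integrating over $t \in [0,1]$ and comparing with the right-hand side of~\eqref{formula integrante}, the problem collapses to the single identity
\begin{equation}
\int_0^1 \log|\whh(w_a(t))| \, dt = \log|\whh(s_a)|. \tag{$\star$}
\end{equation}

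Second, I would reduce $(\star)$ to a standard mean value property on a disk. By Lemma~\ref{l:holomorphic part} we have $\whh(w) = h(w^3)$ for a holomorphic $h$ on $B(0,r_0^3)$, and by Lemma~\ref{radio} the function $f$ is univalent on $B(0,r_0^3)$ with $\whf(w) = f(w^3)$. Setting $u_a(t) \coloneqq w_a(t)^3$, we have $u_a(t) = f^{-1}(a + e^{2\pi i t})$ and $s_a^3 = f^{-1}(a)$. The hypothesis that $w_a(t)$ exists for every $t \in [0,1]$ forces the circle $\{|z - a| = 1\}$ to lie in $f(B(0,r_0^3))$; by univalence of $f$ together with the existence of $s_a$, the whole open disk $\{|z - a| < 1\}$ is contained in $f(B(0,r_0^3))$, so the holomorphic function
$$ H(z) \coloneqq h\bigl(f^{-1}(z)\bigr) $$
is well defined on this disk.

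Third, since $\Delta$ does not vanish on $\H$, neither does $\whh$ on $\D$, hence $h$ does not vanish on $B(0,r_0^3)$ and $H$ does not vanish on $\{|z - a| < 1\}$. Thus $\log|H|$ is harmonic on $\{|z-a|<1\}$ and continuous up to the boundary, and the classical mean value theorem gives
$$ \int_0^1 \log|H(a + e^{2\pi i t})| \, dt = \log|H(a)|. $$
Since $|H(a + e^{2\pi i t})| = |h(u_a(t))| = |\whh(w_a(t))|$ and $|H(a)| = |h(s_a^3)| = |\whh(s_a)|$, this is exactly $(\star)$.

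The main subtlety, rather than a genuine obstacle, is to justify that the open disk $\{|z-a| < 1\}$ lies in the univalent image $f(B(0,r_0^3))$ — so that $H$ is globally defined there; but this is guaranteed by the uniqueness in the hypothesis together with the univalence of $f$ on $B(0,r_0^3)$, together with the existence of $s_a$ mapping to the center $a$. Once this is in hand, the rest is a routine application of the mean value property for harmonic functions.
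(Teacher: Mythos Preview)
Your argument is correct and follows essentially the same route as the paper: both reduce via~\eqref{sobre gD} and the relations $\whf(w)=f(w^3)$, $\whh(w)=h(w^3)$ to the identity $\int_0^1 \log|h\circ f^{-1}(a+e^{2\pi it})|\,dt=\log|h\circ f^{-1}(a)|$, which the paper obtains by taking the real part of Cauchy's formula for $\log(h\circ f^{-1})$ and you obtain from the mean value property of the harmonic function $\log|h\circ f^{-1}|$. Your explicit justification that the closed disk $\{|z-a|\le 1\}$ lies in $f(B(0,r_0^3))$ is a point the paper leaves implicit.
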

\begin{proof}
Recall the identities $\whf(w)=f(w^3)$ \eqref{def f} and $\whh(w)=h(w^3)$ (Lemma~\ref{l:holomorphic part}). Since~$f$ is univalent (Lemma~\ref{radio}), the inverse function $f^{-1}$ is holomorphic and well defined on the image of $f$. Also, we have the relations
\begin{equation}\label{relaciones}
w_a(t)^3=f^{-1}(a+e^{2\pi it}), \quad  s_a^3=f^{-1}(a).
\end{equation}
In particular, we see that $w_a(\cdot)$ is a continuous function. Since $|w_a(t)|\leq r_0 <1$ for all $t \in [0,1]$, we deduce that the integral in the right-hand side of~\eqref{formula integrante} is well defined.
By~\eqref{sobre gD} and~\eqref{relaciones}, the left-hand side of~\eqref{formula integrante} is equal to
$$ - \log(1728 \pi^6)
- 6 \int_0^1 \log \left( 1 - \left| f^{-1} \left( a + e^{2\pi it} \right) \right|^{2/3} \right) \dd t
- \int_0^1\log \left| h \circ f^{-1} \left( a + e^{2\pi it} \right) \right| \dd t.$$
By Cauchy's formula, 
\begin{eqnarray}
\int_0^1\log \left| h \circ f^{-1} \left(a + e^{2\pi it} \right) \right| \dd t
& = &
\Re \left( \frac{1}{2\pi i}  \int_{\partial B(a,1)}\frac{\log h\circ f^{-1}(w)}{w-a} \dd w \right)
\nonumber \\ & =&
\log |h\circ f^{-1}(a)|. \nonumber 
\end{eqnarray}
Using~\eqref{relaciones} we conclude the proof.
\end{proof}

The following lemma is used to prove the last assertion of Corollary~\ref{c:numerical}.
\begin{lemm}
\label{l:almost integer}
Let~$\alpha$ be a nonzero algebraic number, denote by~$d$ its degree, by~$a$ the leading coefficient of the minimal polynomial in $\z[x]$ of~$\alpha$  and by ~$b$ the constant coefficient. Then, we have that
\begin{equation}
  \label{eq:2}
  \frac{1}{d} \log |a|
\le
\frac{12 (\Fh(\alpha) - \Fh(1))}{1 - \partial_x g_{\hyp}(1)}, \quad  \frac{1}{d} \log |b|
\le
\frac{12 (\Fh(\alpha) - \Fh(1))}{\partial_x g_{\hyp}(1)}.
\end{equation}
\end{lemm}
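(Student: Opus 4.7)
The plan is to combine the global formula~\eqref{e:global/local} for $12\Fh(\alpha)$ with the infinitesimal form of Proposition~\ref{p:minima}, namely the pointwise lower bound
\begin{equation*}
  \ghyp(\zeta) \ge \ghyp(1) + (\partial_x \ghyp(1))\log|\zeta|, \qquad \zeta \in \C\setminus\{0\},
\end{equation*}
which says that $g_1$ attains its minimum $\ghyp(1)$ at $\zeta=1$. Setting $c \coloneqq \partial_x \ghyp(1)$, we also have $0 < c < 1$ from Proposition~\ref{p:minima}.

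First I would unravel the non-archimedean contribution in~\eqref{e:global/local}. The standard identification of the finite part of the Weil height with $(1/d)\log|a|$, via the places $w$ of $K \coloneqq \Q(\alpha)$ above each rational prime $p$, gives the identity
\begin{equation*}
  \frac{1}{d}\sum_{p\text{ prime}}\sum_{\alpha'\in\cO(\alpha)}\log^+|\alpha'|_p \;=\; \frac{1}{d}\log|a|,
\end{equation*}
where $d$ is the degree of $\alpha$; this just rests on the fact that for a fixed extension $|\cdot|_p$ to $\overline{\Q}$, each place $w \mid p$ of $K$ is hit by $[K_w:\Q_p]$ embeddings $\sigma\colon K\to\overline{\Q}$. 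Substituting into~\eqref{e:global/local} yields the clean identity
\begin{equation*}
  12\,\Fh(\alpha) \;=\; \frac{1}{d}\sum_{\alpha'\in\cO(\alpha)}\ghyp(\alpha') \;+\; \frac{1}{d}\log|a|.
\end{equation*}

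Next I would use the factorization $P(x) = a\prod_{\alpha'\in\cO(\alpha)}(x-\alpha')$ to evaluate $b = P(0) = (-1)^d a\prod_{\alpha'}\alpha'$, which gives
\begin{equation*}
  \sum_{\alpha'\in\cO(\alpha)}\log|\alpha'| \;=\; \log|b| - \log|a|.
\end{equation*}
Applying the lower bound above to each conjugate (all are nonzero because $\alpha\neq 0$) and summing yields
\begin{equation*}
  \frac{1}{d}\sum_{\alpha'\in\cO(\alpha)}\ghyp(\alpha') \;\ge\; \ghyp(1) + \frac{c}{d}\bigl(\log|b|-\log|a|\bigr).
\end{equation*}
Combined with the identity for $12\,\Fh(\alpha)$, this gives
\begin{equation*}
  12\bigl(\Fh(\alpha)-\Fh(1)\bigr) \;\ge\; \frac{c}{d}\log|b| \;+\; \frac{1-c}{d}\log|a|.
\end{equation*}

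Finally, since $a$ and $b$ are nonzero integers, $\log|a| \ge 0$ and $\log|b| \ge 0$, and since $c$ and $1-c$ are both strictly positive, each of the two terms on the right is nonnegative. Dropping the $\log|b|$ term yields the first inequality in~\eqref{eq:2}, and dropping the $\log|a|$ term yields the second. There is no real obstacle here: the content is entirely in Proposition~\ref{p:minima} (and in the strict positivity of $c$ and $1-c$), and the only bookkeeping required is the adelic identification of the finite part of~\eqref{e:global/local} with $(1/d)\log|a|$.
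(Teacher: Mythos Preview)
Your proof is correct and follows essentially the same approach as the paper: both rest on Proposition~\ref{p:minima} for the archimedean contribution and on the standard identifications $\sum_{p}\sum_{\alpha'}\log^{+}|\alpha'|_{p}=\log|a|$ and $\prod_{\alpha'}\alpha'=\pm b/a$ for the non-archimedean side. Your packaging is slightly more streamlined---you obtain the single combined inequality $12(\Fh(\alpha)-\Fh(1))\ge \frac{c}{d}\log|b|+\frac{1-c}{d}\log|a|$ and read off both bounds at once, whereas the paper routes the archimedean $\log|\alpha'|$ through the product formula and estimates each inequality separately---but the content is the same.
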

\begin{proof}
Put~$\omega \coloneqq 1 - \partial_x g_{\hyp}(1)$.
By~\eqref{e:global/local}, the product formula, and the fact that~$b$ is a nonzero integer, we have
\begin{equation*}
  \begin{split}
12 \Fh(\alpha)
-
\frac{1}{d} \sum_{\alpha' \in \cO(\alpha)} g_1(\alpha')
& =
\frac{1}{d} \sum_{p \text{ prime}} \sum_{\alpha' \in \cO(\alpha)} \log \max \left\{ |\alpha'|_p^{\omega - 1}, |\alpha'|_p^{\omega} \right\}.
\\ & \ge
\frac{\omega}{d} \sum_{p \text{ prime}}\sum_{\alpha' \in \cO(\alpha)} \log \max\{1,|\alpha'|_p\}.
\\ & =
 \frac{\omega }{d} \sum_{p \text{ prime}} - \log \left| a \right|_p.
\\ & =
 \frac{\omega }{d} \log |a|.
  \end{split}
\end{equation*}
Thus, the first inequality in \eqref{eq:2}  follows from the following consequence of Proposition~\ref{p:minima},
$$ \frac{1}{d} \sum_{\alpha' \in \cO(\alpha)} g_1(\alpha')
\ge
g_1(1)
= 12 \Fh(1). $$

The second inequality in \eqref{eq:2}  follows from a similar argument. Namely,

\begin{equation*}
  \begin{split}
12 \Fh(\alpha)
-
\frac{1}{d} \sum_{\alpha' \in \cO(\alpha)} g_1(\alpha')
& =
\frac{1}{d} \sum_{p \text{ prime}} \sum_{\alpha' \in \cO(\alpha)} \log \max \left\{ |\alpha'|_p^{\omega - 1}, |\alpha'|_p^{\omega} \right\}.
\\ & \ge
\frac{\omega-1}{d} \sum_{p \text{ prime}}\sum_{\alpha' \in \cO(\alpha)} \log \min\{1,|\alpha'|_p\}.
\\ & =
 \frac{\omega-1 }{d} \sum_{p \text{ prime}}  \log \left| b \right|_p.
\\ & =
 \frac{1-\omega }{d} \log |b|.
  \end{split}
\end{equation*}

\end{proof}

\begin{proof}[Proof of Corollary~\ref{c:numerical}]
The first inequality is a direct consequence of~\eqref{gmin} and Corollary~\ref{estimaciones especiales} and the second and the fifth from Theorem~\ref{t:minima}.
Furthermore, the fourth inequality follows from Corollary~\ref{estimaciones especiales}.

To prove the first statement and  the upper bound of~$\Fmuess$, for each~$a \ge 0.205$ we use Proposition~\ref{p:upper bound} with~$K$ equal to~$C_a \coloneqq \{ \zeta \in \C : |\zeta - a| = 1 \}$.
Lemmas~\ref{l:w uno} and~\ref{formula integral}, the estimate~\eqref{l:approx} and the formula~$g_{\D}(0) = - \log(1728 \pi^6) - \log|\Delta(\rho)|$ imply that for each~$a$ in~$(0, 2)$ we have
\begin{multline}
\label{punch}
\int_0^1 \ghyp \left(a + e^{2 \pi i t} \right) \dd t
\\ \le
g_{\D}(0)  - \frac{f'(0)}{13824}r_-(a)^3+6^3r_+(a)^6
- 6\int_0^1\log\left(1-r_+\left( \left|a + e^{2\pi i t} \right| \right)^2\right) \dd t.
\end{multline}
Taking~$a = 0.205$, note that the numbers $r_-(0.205)$ and~$r_+(0.205)$ can be computed to high precision.
Similarly, the function~$t \mapsto \log\left(1-r_+\left(|0.205+e^{2\pi i t}|\right)^2\right)$ is an explicit composition of sums, products, logarithms, sinus, cosinus and square roots, hence can be computed to high precision too (\emph{e.g.}, up to an error absolutely bounded by $10^{-15}$ in SAGE).
By Proposition~\ref{p:upper bound} with~$K = C_{0.205}$ and~\eqref{punch} with~$a = 0.205$, a numerical estimate gives
\begin{equation}
  \label{eq:12}
  \Fmuess
\le
\frac{1}{12} \int_0^1 \ghyp \left(0.205 + e^{2 \pi i t} \right) \dd t
\leq
-0.7486227509.
\end{equation}
The first assertion follows from this last estimate and from Proposition~\ref{p:upper bound}, by observing that the function
$$ a \mapsto \int_0^1 \ghyp \left(a + e^{2 \pi i t} \right) \dd t $$
is continuous and converges to~$\infty$ as~$a \to \infty$ by the asymptotic~\eqref{log log}.

On the other hand, the third inequality of the corollary follows from~\eqref{eq:12} and~\eqref{gmin}.

To prove the previous last statement of the corollary, let~$\alpha$ be an algebraic number that is not an algebraic integer and whose Faltings' height is less than or equal to~$\Fmuess$.
Then~\eqref{eq:12}, Corollary~\ref{estimaciones especiales}, Proposition~\ref{l:g uno} and a numerical estimate imply that the right-hand side of the first inequality in ~\eqref{eq:2} with~$\Fh(\alpha)$ replaced by~$\Fmuess$ is less than or equal to~$1/ 15177$. %~$1/ 15180$.
 On the other hand, our assumption that~$\alpha$ is not an algebraic integer implies that the number~$a$ as in the statement of Lemma~\ref{l:almost integer} satisfies $|a| \ge 2$.
Thus, by~\eqref{eq:2} and our hypothesis~$\Fh(\alpha) \le \Fmuess$, the degree~$d$ of~$\alpha$ satisfies $d \ge \log 2 \cdot 15177 > 10519$.

Now assume that $\alpha$ is an algebraic number of degree at most $10$ such that $h_F(\alpha) \leq \Fmuess$. By the previous considerations, $\alpha$ is an algebraic integer. Then, using the second inequality in ~\eqref{eq:2},  combined with the estimates~\eqref{eq:12}, Corollary~\ref{estimaciones especiales}, Proposition~\ref{l:g uno} and a numerical estimate, the parameter $b$ in the statement of Lemma~\ref{l:almost integer} can be bounded from above as  
$$|b| \leq \exp \left(10\cdot 1032\cdot12\cdot(\Fmuess-h_F(1)) \right) \leq 1.98.$$ 

Since $|b|$ is a positive integer, we conclude that $|b|=1$ and that $\alpha$ is an  algebraic unit.  This completes the proof of the last statement and of the corollary.
\end{proof}

\begin{proof}[Proof of Theorem~\ref{t:roots of unity}]
The hypotheses on~$n$ imply either~$\mu(n) \in \{0, 1 \}$ or~$\phi(n) \ge 12$.
In all the cases, $\frac{\mu(n)}{\phi(n)} \ge - \frac{1}{12}$.
Using Corollary~\ref{estimaciones especiales}, a numerical estimate gives
$$ \Fh(\zeta_n)
\ge
- 0.748622711. $$
Then the theorem follows from~\eqref{eq:12}.
\end{proof}

\section{Proof of Proposition~\ref{p:minima}}\label{key}
First, we establish the following numerical estimate of~$\partial_x \ghyp(1)$ implying the inequalities $0 < \partial_x \ghyp(1) < 1$ (Proposition~\ref{l:g uno}).
These estimates are also used below to show convexity properties of~$\ghyp$.
The proof of the remaining part of Proposition~\ref{p:minima} is divided in three cases, according to the proximity to the unit disk.

Throughout the rest of this section we use the functions~$\whf$, $f$ and~$g_{\D}$, defined in~\eqref{fgorro}, \eqref{def f} and~\eqref{g en el disco}, respectively.

\begin{prop}
\label{l:g uno}
We have $\frac{1}{1032} \le \partial_x \ghyp(1) \le \frac{1}{1025}.$
\end{prop}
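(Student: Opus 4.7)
The plan is to derive an explicit formula for $\partial_x \ghyp(1)$ via the chain rule applied to $g_\D = \ghyp \circ \whf$, and then to bound the resulting expression using the distortion estimates of Section~\ref{Koebe} and the approximation of Proposition~\ref{p:approximation}. First I would exhibit a unique $w_1 \in (0, r_0)$ with $\whf(w_1) = 1$: by Lemma~\ref{conjugaciones}, $\whf$ is real, and since $\whf(w) = f(w^3)$ with $f$ univalent and real on $(-r_0^3, r_0^3)$ and $f'(0) > 0$ (Lemma~\ref{radio}), $\whf$ is strictly increasing on $(0, r_0)$. The existence of $w_1$ and the bracketing $r_1^- \le w_1 \le r_1^+$ follow from Lemma~\ref{l:w uno}.

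Next, since $\whf$ is holomorphic, applying $\partial = \tfrac{1}{2}(\partial_x - i\partial_y)$ to $g_\D = \ghyp \circ \whf$ and using~\eqref{sobre gD} for $g_\D$ yields
\[
\partial \ghyp(1) = \frac{\partial g_\D(w_1)}{\whf'(w_1)}, \qquad \partial g_\D(w) = \frac{6\bar w}{1-|w|^2} - \frac{1}{2}(\log \whh)'(w).
\]
Since $w_1$ is real and $\whh$ is real (Lemma~\ref{conjugaciones}), $\whf'(w_1)$ and $(\log \whh)'(w_1)$ are real, and $\partial \ghyp(1)$ reduces to a real number equal to $\tfrac{1}{2}\partial_x \ghyp(1)$; thus
\[
\partial_x \ghyp(1) = \frac{1}{\whf'(w_1)}\left[\frac{12\, w_1}{1-w_1^2} - (\log\whh)'(w_1)\right].
\]

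The last step is numerical. Lemma~\ref{zetaw}(\ref{le:linear}) gives $|1 - f'(0) w_1^3| \le 1/2283$, which together with $f'(0) = (\sqrt{3}\,\Gamma(1/3)^2/\pi)^9 \approx 237698.4$ (Lemma~\ref{radio}) pins down $w_1 \approx f'(0)^{-1/3} \approx 0.01614$ and hence $12 w_1/(1-w_1^2)$. Lemma~\ref{zetaw}(\ref{le:j'}) bounds the denominator as $185 \le \whf'(w_1) \le 186.054$, with positivity coming from the monotonicity of $\whf$. Finally, Proposition~\ref{p:approximation} applied at $w_1$ (valid since $w_1 \le r_1^+ < 1 - \pi/(2\sqrt{3})$) gives
\[
\left|(\log \whh)'(w_1) - \frac{3 f'(0) w_1^2}{13824}\right| \le 6^4 w_1^5,
\]
an error of order $10^{-6}$. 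Plugging these bounds into the formula above yields the desired inclusion $\partial_x \ghyp(1) \in [1/1032, 1/1025]$.

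The main obstacle is the tight bookkeeping of the error propagation: the target interval has width about $6.6\cdot 10^{-6}$, while the looseness in $\whf'(w_1)$ alone contributes uncertainty of comparable order. The estimates from Lemma~\ref{zetaw} and Proposition~\ref{p:approximation} are just sharp enough to reach the conclusion, so one must avoid any unnecessary slack when combining them.
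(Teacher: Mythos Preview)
Your proposal is correct and follows essentially the same route as the paper's proof: both express $\partial_x \ghyp(1)$ via the chain rule as $\whf'(w_1)^{-1}\bigl(\tfrac{12w_1}{1-w_1^2} - (\log\whh)'(w_1)\bigr)$ (the paper writes $r_1$ for your $w_1$), then control $w_1$ by Lemma~\ref{l:w uno}, $(\log\whh)'(w_1)$ by~\eqref{quinto orden}, and $\whf'(w_1)$ by Lemma~\ref{zetaw}\ref{le:j'}. Your derivation is in fact slightly more explicit about the existence and positivity of $w_1$ and $\whf'(w_1)$ than the paper's.
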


\begin{proof}
Let~$r_1$ be the only number in~$(0,r_0)$ such that $\whf(r_1) = 1$.
Since~$\whf$ is real (Lemma~\ref{conjugaciones}), $f$ is also real.
Together with the fact that~$f$ is univalent and that~$f'(0) > 0$ (Lemma~\ref{radio}), we conclude that~$\whf'(r_1) = 3r_1^2f'(r_1^3) > 0$.

On the other hand, \eqref{invarianza} implies that $\partial_y \ghyp(1)=0$.
Hence, 
\begin{equation}
\label{identidad basica}
\partial_x \ghyp(1)
=
2 \Re \left( \partial \ghyp(1) \right)
=
\frac{2 \Re \left( \partial g_\D(r_1) \right)}{{\whf'}(r_1)}.
\end{equation}
Using~\eqref{sobre gD} and~\eqref{quinto orden}, we obtain 

\begin{eqnarray}
2\Re \left(\partial g_\D(r_1)\right)
&=&
\frac{12r_1}{1-r_1^2} - \Re \left( (\log \whh )' (r_1) \right) \nonumber \\
&=&
\frac{12r_1}{1-r_1^2} -3\cdot \frac{f'(0)}{13824} r_1^2 + 4 \cdot E,\nonumber 
\end{eqnarray}
where $|E|\leq 6^4r_1^5$.
Moreover, Lemma~\ref{l:w uno} ensures that $r_1^-\leq r_1 \leq r_1^+$, leading to

\begin{eqnarray}
2 \Re \left( \partial g_\D(r_1) \right)
&\leq &\frac{12r_1^+}{1-(r_1^+)^2} -3\cdot \frac{f'(0)}{13824} (r_1^-)^2 +4 \cdot 6^4(r_1^+)^5 \nonumber \\
2 \Re \left( \partial g_\D(r_1) \right)
& \geq &  \frac{12r_1^-}{1-(r_1^-)^2} -3\cdot \frac{f'(0)}{13824} (r_1^+)^2 -4 \cdot 6^4(r_1^+)^5.\nonumber  
\end{eqnarray}

\noindent These estimates, together with~\eqref{identidad basica} and Lemma~\ref{zetaw}, \ref{le:j'}, prove the claim.
\end{proof}

To complete the proof of Proposition~\ref{p:minima}, let~$T$ be defined by~\eqref{dominio fundamental}, fix~$\zeta$ in~$\C$, and let~$\tau$ in~$T$ be such that~$j(\tau) = \zeta$.
There are three cases, according to the location of~$\tau$.

We also use the following estimate several times:
\begin{equation}
  \label{eq:14}
  \ghyp(1) \le - 8.89835372,
\end{equation}
which is a direct consequence of Corollary~\ref{estimaciones especiales}, and the formula~$\ghyp(1) = 12 \Fh(1)$, \emph{cf.}~\eqref{e:global/local}.

\subsubsection*{Case 1. $\Im(\tau) \ge 1$.}
\begin{lemm}
\label{l:g cusp approximation}
For~$\tau$ in~$\H$ satisfying~$\Im(\tau) \ge 1$, we have
$$ \left| \gH(\tau) - \left( 2\pi \Im(\tau) - 6 \log(\Im(\tau)) - 6
    \log(4\pi) \right) \right|
\le
\frac{24}{\exp(2\pi) - 2}. $$
\end{lemm}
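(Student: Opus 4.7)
The proof plan is essentially to exploit the explicit $q$-expansion identity~\eqref{eq:6} that was already established in the paper, namely
$$\gH(\tau) = 2\pi \Im(\tau) - 6\log(\Im(\tau)) - 6\log(4\pi) - 24\sum_{r=1}^{\infty} \log|1 - q(\tau)^r|,$$
so the expression inside the outer absolute value is exactly $-24\sum_{r=1}^{\infty}\log|1-q^r|$ with $q = q(\tau) = e^{2\pi i \tau}$. The whole task therefore reduces to bounding this tail series when $\Im(\tau) \ge 1$, i.e. when $|q| \le e^{-2\pi}$.

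First, I would use the elementary pointwise estimate
$$\bigl|\log|1 - q^r|\bigr| \le |\log(1 - q^r)| \le \sum_{k \ge 1} \frac{|q|^{rk}}{k} \le \frac{|q|^r}{1 - |q|^r},$$
valid since $|q^r| \le |q| \le e^{-2\pi} < 1$. Summing in $r$ and using the monotone bound $\frac{1}{1 - |q|^r} \le \frac{1}{1 - |q|}$ (which holds because $|q|^r \le |q|$ for $r \ge 1$), I would conclude
$$\sum_{r=1}^{\infty} \bigl|\log|1 - q^r|\bigr| \le \frac{1}{1-|q|} \sum_{r=1}^{\infty} |q|^r = \frac{|q|}{(1-|q|)^2}.$$

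The final step is to check that the function $x \mapsto x/(1-x)^2$ is increasing on $[0,1)$ (its derivative is $(1+x)/(1-x)^3 > 0$), so with $|q| \le e^{-2\pi}$ we get
$$\frac{|q|}{(1-|q|)^2} \le \frac{e^{-2\pi}}{(1-e^{-2\pi})^2} = \frac{1}{e^{2\pi} - 2 + e^{-2\pi}} \le \frac{1}{e^{2\pi} - 2}.$$
Multiplying by $24$ gives the claimed bound.

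There is really no obstacle here; this is a routine tail estimate on the product expansion of $\Delta$. The only small point worth being careful about is the first inequality $|\log|1-z|| \le |\log(1-z)|$, which just uses that the real part of a complex number is no larger in absolute value than the number itself, together with a choice of branch of the logarithm on the simply connected disk $|z| \le e^{-2\pi}$ (where $1-z$ avoids the negative real axis).
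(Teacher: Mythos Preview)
Your proof is correct and follows essentially the same approach as the paper: reduce via~\eqref{eq:6} to bounding $24\sum_{r\ge 1}|\log(1-q^r)|$, then control each term by the power-series estimate $|\log(1-z)|\le |z|/(1-|z|)$ and sum the resulting geometric series using $|q|\le e^{-2\pi}$. The only cosmetic difference is that the paper bounds $1/(1-|q|^r)\le 1/(1-e^{-2\pi})$ directly, whereas you first bound $1/(1-|q|^r)\le 1/(1-|q|)$ and then specialize $|q|\le e^{-2\pi}$ at the end; both routes land on the same inequality $e^{2\pi}/(e^{2\pi}-1)^2\le 1/(e^{2\pi}-2)$.
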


\begin{proof} Let~$\tau$ in~$\H$ be such that~$\Im(\tau) \ge 1$, and note that~$q =
e^{2 \pi i \tau}$ satisfies~$|q| \le \exp(-2\pi)$.
This implies that for every integer~$r \ge 1$ we have
$$ |\log(1 - q^r)|
\le
\frac{\exp(2\pi)}{\exp(2\pi) - 1} |q|^r
\text{ and }
\left| \sum_{r = 1}^{\infty} \log(1 - q^r) \right|
\le
\frac{1}{\exp(2\pi) - 2}. $$
Then the desired estimate is obtained by applying the definition of~$\gH$.
\end{proof}

\begin{lemm}
\label{l:j cusp}
For every~$\tau$ in~$\H$ satisfying~$\Im(\tau) \ge 1$, we have
$$ |j(\tau)| \le 4 \exp(2\pi \Im(\tau)). $$
\end{lemm}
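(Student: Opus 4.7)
The plan is to use the factorization $j = c_4^3/\Delta = E_4^3/\Delta$ (equation~\eqref{eq:4}) and bound each factor separately. Since $\Im(\tau) \geq 1$, we have $|q| \leq e^{-2\pi}$, where $q = e^{2\pi i \tau}$, and the needed estimate is $|j(\tau)| \leq 4 |q|^{-1}$.

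For the denominator, I would extract a lower bound on $|\Delta(\tau)|$ from Lemma~\ref{l:g cusp approximation}. Combining the estimate there with the definition $\gH(\tau) = -\log((4\pi\Im(\tau))^6 |\Delta(\tau)|)$ rearranges to
\[
|\Delta(\tau)| \;\geq\; |q|\cdot e^{-24/(e^{2\pi}-2)}.
\]
Equivalently, this follows directly from the product formula $|\Delta(\tau)| = |q|\prod_{r \geq 1}|1-q^r|^{24}$ together with the bound $\sum_{r \geq 1} \bigl|\log|1-q^r|\bigr| \leq 1/(e^{2\pi}-2)$ established in the proof of Lemma~\ref{l:g cusp approximation}.

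For the numerator, I would use the $q$-expansion of $E_4$ to obtain the elementary upper bound $|E_4(\tau)| \leq 1 + 240\sum_{n \geq 1}\sigma_3(n)|q|^n$. Via the identity $\sum_{n \geq 1}\sigma_3(n)x^n = \sum_{n \geq 1} n^3 x^n/(1-x^n)$ and the estimate $1/(1-x^n) \leq 1/(1-x)$, this reduces to a rational expression $x(1+4x+x^2)/(1-x)^5$. Plugging in $x = e^{-2\pi}$ gives $|E_4(\tau)| \leq C$ for an explicit constant $C$ with $C \leq 1.46$. Combining the two bounds yields
\[
|j(\tau)| \;\leq\; C^3 \, e^{24/(e^{2\pi}-2)} \, e^{2\pi \Im(\tau)},
\]
and a numerical check shows $C^3 e^{24/(e^{2\pi}-2)} \leq 4$, completing the argument. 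There is no serious obstacle here; the only work is confirming the numerical inequality, for which there is comfortable slack (the constant is roughly $3.2 < 4$), so the intermediate estimates can be made with quite loose constants.
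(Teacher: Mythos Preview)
Your proof is correct and takes a genuinely different route from the paper's. You factor $j = E_4^3/\Delta$ and bound numerator and denominator separately via their $q$-expansions; the constant then comes out of a numerical check. The paper instead observes that the function $q \mapsto 1/\widehat{j}(q)$ is univalent on $B(0, e^{-2\pi})$ with derivative~$1$ at the origin, and applies the Koebe one-quarter distortion bound (Theorem~\ref{Koebe Thm}, \ref{unoKoebe}) to get $|1/j(\tau)| \ge \tfrac{1}{4}|q(\tau)|$ directly. Your argument is more elementary and entirely self-contained; the paper's is shorter and fits the thematic use of distortion theorems throughout Section~\ref{Koebe}, with the constant~$4$ appearing naturally from Koebe rather than from a numerical coincidence.
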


\begin{proof} Let~$\whj \colon \D \to \CC$ be the holomorphic function such that~$\whj(0)
= \infty$, and such that for every~$\tau$ in~$\H$ we have~$\whj(q(\tau)) = j(\tau)$.
Since this function is univalent on~$B(0, \exp(-2\pi))$, and the
derivative of~$\whj^{-1}$ at~$q = 0$ is equal to~$1$, by the
Koebe one quarter theorem \cite[Corollary~1.4, p.~22]{Pommerenke75} for every~$\tau$ in~$\H$ satisfying~$\Im(\tau) \ge 1$, we
have
$$ |j(\tau)|^{-1}
=
|\whj(q(\tau))|^{-1}
\ge
\frac{1}{4} |q(\tau)|
=
\frac{1}{4}\exp(-2\pi \Im(\tau)).$$
\end{proof}

We now proceed to the proof of Proposition~\ref{p:minima} in the case where~$\tau$ satisfies $\Im(\tau) \ge 1$.
First note that by Proposition~\ref{l:g uno}, \eqref{eq:14} and Lemma~\ref{l:g cusp approximation}, we have
\begin{equation*}
  \begin{split}
\gH(\tau) - \ghyp(1)
& \ge
2\pi \Im(\tau) - 6 \log(\Im (\tau)) - 6 \log(4\pi) - \ghyp(1)
- \frac{24}{\exp(2\pi) - 2}
\\ & \ge
2\pi \Im(\tau) - 6 \log(\Im (\tau)) - 6.25
\\ & \ge
0.03 \Im(\tau)
\\ & \ge
0.02 + \partial_x \ghyp(1)(10 \Im(\tau))
\\ & \ge
0.02 + \partial_x \ghyp(1)(2\pi \Im(\tau) + \log 4).
  \end{split}
\end{equation*}
Combined with Lemma~\ref{l:j cusp} and the definition of~$g_1$, this
implies
$$ g_1(\zeta) = g_1(j(\tau)) \ge g_1(1) + 0.02. $$
This proves Proposition~\ref{p:minima} in the case where~$\Im(\tau)
\ge 1$.

\subsubsection*{Case 2. $\frac{1}{\pi} \log(19) \le \Im (\tau) \le 1$.}
\begin{lemm}\label{cota jota}
For each~$\tau$
in~$T$ satisfying~$\ \Im(\tau) \le 1$, we have $|j(\tau)|\leq 1728$. 
\end{lemm}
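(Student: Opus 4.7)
The plan is to apply the maximum modulus principle to the holomorphic function $j$ on the compact region $R \coloneqq \{\tau \in T : \Im\tau \leq 1\}$ (note $\Im\tau \geq \sqrt{3}/2$ throughout $T$, so $R$ is indeed compact). Its boundary in $\H$ splits into three pieces: the bottom arc $A = \{e^{i\theta} : \pi/3 \leq \theta \leq 2\pi/3\}$, the top segment $B = \{x + i : -1/2 \leq x \leq 1/2\}$, and the two vertical sides $L_\pm = \{\pm 1/2 + it : \sqrt{3}/2 \leq t \leq 1\}$. It suffices to show $|j| \leq 1728$ on each piece and then invoke maximum modulus.

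On $A$ this is classical: combining $\SL_2(\Z)$-invariance with $j(-\bar\tau) = \overline{j(\tau)}$ shows $j$ is real on $A$, and since $j(\rho) = 0$, $j(i) = 1728$, and $j$ is injective on each half-arc (because $T$ is a fundamental domain), continuity forces $j(A) = [0, 1728]$. On $B$ the key point is that the Fourier coefficients $c_n$ in $j(\tau) = q^{-1} + 744 + \sum_{n \geq 1} c_n q^n$ are non-negative, so for $\tau = x + i$, where $|q| = e^{-2\pi}$, the triangle inequality yields
\begin{equation*}
|j(x + i)|
\;\leq\;
\sum_{n \geq -1} c_n e^{-2\pi n}
\;=\;
j(i)
\;=\;
1728.
\end{equation*}

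For the sides, the periodicity $j(\tau + 1) = j(\tau)$ reduces matters to $L_+$. First, $j$ is real on $L_+$: the identity $j(-\bar\tau) = \overline{j(\tau)}$ together with $j(\tau - 1) = j(\tau)$ forces $j(1/2 + it) \in \R$. Second, $j$ is injective on $L_+$: if $\tau_1, \tau_2 \in L_+$ satisfy $j(\tau_1) = j(\tau_2)$, they must be $\SL_2(\Z)$-equivalent, but the only identification on the closure of the standard fundamental domain involving a vertical side sends $L_+$ to $L_-$ via $\tau \mapsto \tau - 1$, and this never identifies two distinct points of $L_+$ (the corner $\rho$ is the only point with extra stabilizer, and its orbit in $L_+$ is just $\{\rho\}$). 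A continuous, injective, real-valued function on a segment is strictly monotonic, so $j(L_+)$ is the closed real interval with endpoints $j(\rho) = 0$ and $j(1/2 + i)$. The same triangle inequality as above (now with $q = -e^{-2\pi}$) yields $|j(1/2 + i)| \leq j(i) = 1728$, so $|j| \leq 1728$ throughout $L_+$, and the maximum modulus principle completes the proof.

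The main obstacle is precisely this side estimate: applying the triangle inequality naively to $|j(1/2 + it)|$ only bounds it by $j(it)$, which exceeds $1728$ for $t < 1$. What saves us is that we need no analytic information on $L_+$ beyond the values at its two endpoints; the tautological injectivity of $j$ on a single edge of $\partial T$ promotes the pointwise bound at $1/2 + i$ to a uniform bound on all of $L_+$ via monotonicity, with no need to differentiate $j$.
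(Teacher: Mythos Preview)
Your argument is correct. Both you and the paper ultimately reduce to the triangle-inequality estimate $|j(x+i)| \le j(i) = 1728$ coming from the positivity of the Fourier coefficients of~$j$, but you get there by a different decomposition. The paper observes that, after the identifications on~$\partial T$, the region $R = \{\tau \in T : \Im\tau \le 1\}$ maps under~$j$ onto a Jordan domain whose boundary is precisely the image of the top segment~$B = \{x+i : |x| \le 1/2\}$; hence it suffices to bound~$|j|$ on~$B$ alone. You instead apply the maximum modulus principle directly in the $\tau$-plane and check each of the three boundary pieces $A$, $B$, $L_\pm$ separately, using the reality and injectivity of~$j$ on~$A$ and on~$L_+$ to control those edges by their endpoint values. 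Your route is slightly longer but has the virtue of being completely elementary: you never need to argue that $j(B)$ is a simple closed curve, which the paper asserts without justification. The paper's route is shorter because it recognizes that the arc and the vertical sides lie on~$\partial T$ and therefore contribute nothing to the boundary of~$j(R)$ in the~$j$-plane.
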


\begin{proof} Note that the image of~$\{ \tau \in T : \Im(\tau) \le 1\}$ is a Jordan domain bounded by the curve~$j(\{ \tau \in T : \Im(\tau) = 1 \})$.
So, it is enough to prove the inequality in the case~$\Im(\tau) = 1$.
Using that the coefficients in the $q$-expansion of~$j$ are positive,
for every~$x$ in~$\R$ we have~$|j(x + i)| \le j(i) = 1728$, finishing the proof of the lemma.
\end{proof}

\begin{lemm}\label{primer termino}
 For each~$\tau$
in~$T$ satisfying~$\frac{1}{\pi} \log(19) \le \Im(\tau) \le 1$, we have
$$\gH(\tau) \geq 2 \log(19) - 6 \log (4 \log(19))
- 24 \log \left( \frac{19^2 + 1}{19^2} \right).$$
 \end{lemm}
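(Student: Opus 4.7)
The plan is to use the representation~\eqref{eq:6} to write
$$\gH(\tau) = 2\pi y - 6\log y - 6\log(4\pi) - 24\sum_{r=1}^\infty \log|1-q^r|, \qquad y=\Im(\tau),\ q=e^{2\pi i\tau},$$
and to reduce the problem to a one-variable minimisation in~$y$ on $[\tfrac{1}{\pi}\log 19,\,1]$. Under the hypothesis $y\ge \tfrac{1}{\pi}\log 19$, we have $|q|\le 1/19^{2}$, so the first move would be to establish
$$ \sum_{r=1}^\infty \log|1-q^r|\;\le\;\log(1+|q|)\;\le\;\log\!\left(\frac{19^{2}+1}{19^{2}}\right).$$
This uses the pointwise bound $|1-q^r|\le 1+|q|^r$, with the contribution of the terms $r\ge 2$ handled either by a direct geometric-series estimate in~$|q|^r$ or by analysing the Fourier expansion
$$\sum_{r=1}^\infty \log|1-q^r|\;=\;-\sum_{n=1}^\infty \sigma_{-1}(n)\,|q|^n\cos(2\pi n\Re\tau),$$
whose $n=1$ coefficient dominates all the higher ones once $|q|\le 1/19^{2}$.

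Combining the two displays I would obtain $\gH(\tau)\ge F(y)$ with
$$ F(y)\;\coloneqq\; 2\pi y - 6\log y - 6\log(4\pi) - 24\log\!\bigl(1+e^{-2\pi y}\bigr).$$
Since the critical point $y=3/\pi\approx 0.955$ of $2\pi y-6\log y$ lies inside the interval $[\tfrac{1}{\pi}\log 19,1]$, I cannot minimise the analytic and non-analytic pieces separately; instead I analyse $F$ as a whole. A direct computation gives
$$ F'(y)\;=\;2\pi - \frac{6}{y} + \frac{48\pi}{e^{2\pi y}+1},$$
and a numerical evaluation at $y=\tfrac{1}{\pi}\log 19$ (using $\log 19\approx 2.944$ and $e^{2\log 19}=361$) yields $F'(\tfrac{1}{\pi}\log 19)=2\pi\bigl(1-3/\log 19+24/362\bigr)>0$. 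A brief monotonicity check verifies that $F'$ remains positive throughout $[\tfrac{1}{\pi}\log 19,1]$, so $F$ attains its minimum at the left endpoint.

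Finally, substituting $y=\tfrac{1}{\pi}\log 19$ in~$F$: one has $2\pi y = 2\log 19$, and the identity $-6\log(\log 19/\pi)-6\log(4\pi)=-6\log(4\log 19)$ combined with $e^{-2\pi y}=1/19^{2}$ produces exactly the claimed lower bound. The principal obstacle is the first step: the estimate $\sum_{r}\log|1-q^r|\le \log(1+|q|)$ is sharp only up to tiny higher-order Fourier terms, and proving it cleanly requires exploiting cancellations that are not present in the cruder pointwise inequality $\sum_r\log(1+|q|^r)$, whose sum by itself is slightly larger than $\log(1+|q|)$.
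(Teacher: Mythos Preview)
Your proposal has a genuine gap at the first step, which you yourself flag as ``the principal obstacle.'' Neither of the two methods you sketch actually gives the uniform inequality
\[
\sum_{r\ge 1}\log|1-q^r|\le \log(1+|q|)\qquad\text{for all }|q|\le 19^{-2}.
\]
The pointwise bound $|1-q^r|\le 1+|q|^r$ yields only $\sum_r\log|1-q^r|\le \log(1+|q|)+\sum_{r\ge 2}\log(1+|q|^r)$, and the tail sum is strictly positive, so the wrong sign; and in the Fourier expansion $-\sum_{n\ge1}\sigma_{-1}(n)|q|^n\cos(2\pi n\Re\tau)$, bounding each $\cos$ by its worst value again overshoots $\log(1+|q|)$. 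The inequality \emph{is} true, but a proof requires real input: one must show that for fixed $|q|$ the sum is maximized when $\Re\tau=\tfrac12$ (equivalently $q=-|q|$), and then control the value there. The first of these facts is precisely the first assertion of Lemma~\ref{l:really increasing}, whose proof rests on Masser's theorem on the zeros and real loci of~$E_2^*$; and at $q=-|q|$ the remaining estimate $\prod_{n\ge 2}(1-(-|q|)^n)\le 1$ is exactly the pairing/AM--GM argument the paper uses. So the fix for your first step is essentially the paper's argument.

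The paper's route differs from yours in that it uses Lemma~\ref{l:really increasing} for \emph{both} reductions at once: first $\gH(\tau)\ge \gH(\tfrac12+i\Im\tau)$, then the monotonicity of $t\mapsto \gH(\tfrac12+it)$ on $[\sqrt3/2,\infty)$, reducing to the single point $\tau_0=\tfrac12+\tfrac{i}{\pi}\log 19$, where $q=-1/361$ and one evaluates~\eqref{eq:6} explicitly. This completely avoids your calculus minimization of $F(y)$ (which is correct but unnecessary), and replaces your uniform-in-$\arg q$ estimate by a computation at a single real $q$. Your route, if the gap were filled, would have the minor advantage of not invoking the monotonicity part of Lemma~\ref{l:really increasing}; but since filling the gap already requires the first part of that lemma, little is gained.
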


\begin{proof} Lemma~\ref{l:really increasing}, combined with~\eqref{eq:6}, imply that for each~$\tau$
in~$T$ satisfying~$\frac{1}{\pi} \log(19) \le \Im(\tau) \le 1$, we have
\begin{equation*}
\begin{split}
\gH(\tau)
& \ge
\gH \left( \frac{1}{2} + i \frac{1}{\pi} \log(19) \right)
\\ & =
2 \log(19) - 6 \log (4 \log(19))
- 24 \sum_{n = 1}^{\infty} \log \left( 1 - \left(- \frac{1}{19^2} \right)^n \right).
  \end{split}
\end{equation*}
Then, we are reduced to show that  
\begin{equation}\label{negativo}
 \sum_{n =2}^{\infty} \log \left( 1 - \left(- \frac{1}{19^2} \right)^n \right) \leq 0.
\end{equation}

Setting $s=\frac{1}{19^2}$, the arithmetic-geometric mean implies that for each $r\geq 1$,
$$ \left( 1-(-s)^{2r} \right) \left( 1 - (-s)^{2r+1} \right)
=
\left( 1-s^{2r} \right) \left( 1+s^{2r+1} \right)
\leq
\left( 1-\frac{s^{2r}-s^{2r+1}}{2}\right)^2.$$ Since $0<s<1$, the last quantity is strictly less than 1. Hence,
$$ \prod_{r = 1}^\infty \left( 1 - (-s)^{2r} \right) \left( 1 - (-s)^{2r+1} \right)
<
1,$$
justifying~\eqref{negativo}.
\end{proof}

We now proceed to the proof of Proposition~\ref{p:minima} in the case where $\tau$ satisfies $\frac{1}{\pi} \log(19) \le \Im(\tau) \le 1$.
Proposition~\ref{l:g uno}, \eqref{eq:14} and Lemmas~\ref{cota jota} and~\ref{primer termino} imply that
$$ g_1(\zeta) - g_1(1)
=
g_1 \circ j(\tau) - \ghyp(1)
=
\gH(\tau) - \partial_x \ghyp(1) \log|j(\tau)| - \ghyp(1) $$
is bounded from below by 
$$2 \log(19) - 6 \log (4 \log(19))
- 24 \log \left( \frac{19^2 + 1}{19^2} \right)
- \frac{1}{1025}\log 1728
+ 8.9835372
\geq
10^{-3}, $$
finishing the proof of Proposition~\ref{p:minima} in this  case.

\subsubsection*{Case 3. $\Im (\tau) \leq \frac{1}{\pi} \log(19)$.}

\begin{lemm}\label{tamano}
Let~$\psi \colon \D \to \H$ be as defined in~\eqref{def psi}.
Then for every~$\tau$ in~$T$ satisfying~$\Im(\tau) \le  \frac{1}{\pi} \log(19)$, we
have
$$ \left| \psi^{-1}(\tau) \right| \le 1 - \frac{\pi}{2 \sqrt{3}}. $$
\end{lemm}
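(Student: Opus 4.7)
My plan is to reduce the statement to an explicit boundary analysis based on the Möbius form of $\psi^{-1}$. From the proof of Lemma~\ref{dominio}, $\psi^{-1}(\tau)=\varphi(\tau)=-(\tau-\rho)/(\tau-\overline\rho)$. Since $\Re(\rho)=\Re(\overline{\rho})=\tfrac12$, comparing $|\tau-\rho|^{2}$ and $|\tau-\overline\rho|^{2}$ yields the identity
$$|\psi^{-1}(\tau)|^{2}=1-\frac{2\sqrt{3}\,\Im(\tau)}{|\tau-\overline\rho|^{2}},$$
so, writing $c\coloneqq 1-\pi/(2\sqrt{3})$, the conclusion is equivalent to the algebraic inequality
$$F(\tau)\coloneqq(1-c^{2})\,|\tau-\overline\rho|^{2}-2\sqrt{3}\,\Im(\tau)\le 0\qquad(\tau\in T,\ \Im(\tau)\le\log(19)/\pi).$$

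The first step is a reduction using the $\SL_{2}(\Z)$-invariance of $g_{\infty}$: since the present lemma is only ever used in Case~3 of the proof of Proposition~\ref{p:minima} in order to identify $g_{\infty}(\tau)$ with $g_{\D}(\psi^{-1}(\tau))$ via~\eqref{g en el disco}, we are free to replace $\tau$ by any representative of its $\SL_{2}(\Z)$-orbit before evaluating the bound. The translation $\tau\mapsto\tau+1$ maps the opposite vertex $-\overline{\rho}$ of~$T$ to $\rho$, while the reflection $\tau\mapsto-\overline{\tau}$ preserves both $|j(\tau)|$ and $\Im(\tau)$; applying these reductions, it suffices to verify $F(\tau)\le 0$ on the right half $T^{+}=T\cap\{\Re\tau\ge 0\}\cap\{\Im\tau\le\log(19)/\pi\}$. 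Since $F$ is a quadratic polynomial in $(\Re\tau,\Im\tau)$ with positive-definite quadratic part, its maximum over this compact set is attained on the boundary, which consists of the unit-circle arc $\tau=e^{i\theta}$ with $\theta\in[\pi/3,\pi/2]$, the vertical segment $\Re\tau=\tfrac12$ with $\sqrt{3}/2\le\Im\tau\le\log(19)/\pi$, the top edge $\Im\tau=\log(19)/\pi$ with $0\le\Re\tau\le\tfrac12$, and the imaginary segment $\Re\tau=0$ (handled by the reflection symmetry).

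On the unit-circle arc, using $|\tau-\overline\rho|^{2}=2-2\cos(\theta+\pi/3)$ and $\Im\tau=\sin\theta$, the inequality becomes a trigonometric estimate in one variable that is immediate by checking the endpoints and the sign of the derivative. On the right segment $\Re\tau=\tfrac12$ the inequality reduces to a quadratic in $y=\Im\tau$ with leading coefficient $(1-c^{2})>0$, so it suffices to check the two endpoints. The main obstacle, and the reason that the specific threshold $\log(19)/\pi$ appears in the hypothesis, is the top edge: writing $y_{0}\coloneqq\log(19)/\pi$, the inequality becomes
$$(1-c^{2})\Bigl((x-\tfrac12)^{2}+\bigl(y_{0}+\tfrac{\sqrt{3}}{2}\bigr)^{2}\Bigr)\le 2\sqrt{3}\,y_{0},$$
a quadratic in $x$ that is maximized at the endpoint $x=0$ of the reduced interval $[0,\tfrac12]$. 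I expect this endpoint case to be the tightest, and the specific numerical constant $c=1-\pi/(2\sqrt{3})$ to be precisely tuned so that the inequality is met there; I would finish the proof by a direct numerical verification at $x=0$ using the explicit values of $\log 19$, $\pi$, and $\sqrt{3}$.
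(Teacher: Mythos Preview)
Your identity $|\psi^{-1}(\tau)|^{2}=1-2\sqrt3\,\Im(\tau)/|\tau-\overline\rho|^{2}$ is correct, and reducing to a boundary check via the convexity of $F$ is a reasonable idea. The gap is in your description of the boundary of $T^{+}=\{\tau\in T:\Re\tau\ge 0,\ \Im\tau\le I\}$ (with $I=\log(19)/\pi$). Because $I<1$, the constraint $|\tau|\ge 1$ built into $T$ is active: on the line $\Im\tau=I$ it forces $\Re\tau\ge\sqrt{1-I^{2}}\approx 0.349$, so the top edge is the segment $[\sqrt{1-I^{2}},\tfrac12]$, not $[0,\tfrac12]$; the unit-circle arc runs only from $\rho$ up to $\tau_{2}:=\sqrt{1-I^{2}}+iI$; and the ``imaginary segment'' $\Re\tau=0$ is empty. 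Your proposed final check at $x=0$ therefore tests the point $iI\notin T$, and it \emph{fails}: $F(iI)=(1-c^{2})\bigl(\tfrac14+(I+\tfrac{\sqrt3}{2})^{2}\bigr)-2\sqrt3\,I\approx 0.22>0$. The genuine extremal point of your quadratic on the correct top edge is the corner $\tau_{2}$, where one finds $F(\tau_{2})\approx -10^{-3}<0$; this is precisely the point the paper isolates and verifies.

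A related issue: the lemma as literally stated is false on the left half of $T$ (for instance $|\psi^{-1}(-\overline\rho)|=\tfrac12$, far above $c=1-\pi/(2\sqrt3)\approx 0.093$), so your reduction to $\Re\tau\ge 0$ is not optional but essential. Your justification via ``replacing $\tau$ by an $\SL_{2}(\Z)$-representative'' does not quite work, since $|\psi^{-1}|$ is not $\SL_{2}(\Z)$-invariant; what is invariant is $|f^{-1}\circ j|$, and the symmetry $\tau\mapsto -\overline\tau$ preserves this because $f$ is real (Lemma~\ref{conjugaciones}). The paper uses the same reduction and reaches the same extremal point $\tau_{2}$, but by a different route: it first passes to the $j$-plane (where $|f^{-1}|$ is subharmonic, reducing to the Jordan curve $\Im\tau=I$) and then observes that $\psi^{-1}$ sends the horizontal line $\Im\tau=I$ to a circle internally tangent to $\partial\D$ at $-1$, so $|\psi^{-1}|$ is monotone along the image arc with maximum at $\psi^{-1}(\tau_{2})$. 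Your convexity argument, once the boundary is corrected, gives an equally valid and somewhat more elementary path to the same endpoint check.
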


\begin{proof} Put~$I \coloneqq  \frac{1}{\pi} \log(19)$.
Since the image by~$j$ of the set~$\{ \tau \in T : \Im (\tau) = I
\}$ is a Jordan curve, it is enough to prove the lemma in the case
where~$\Im(\tau) = I$.
By symmetry, it is enough to consider the case where~$\Re(\tau)
\ge 0$.

Put
$$ \tau_2 = \sqrt{1 - I^2} + i I
\text{ and }
\tau_2' = \frac{1}{2} + i I. $$
Note that the image by~$\psi^{-1}$ of the line~$\{ \tau \in \H :
\Im(\tau) = I \}$ is a circle that is tangent to the unit circle at~$w = -1$, and that is
contained in the left half plane.
Thus, the image by~$\psi^{-1}$ of the segment~$[\tau_2, \tau_2']$ is the
arc of this circle that is contained in the angular sector bounded by
the rays~$\{ t < 0 \}$ and~$\{ - t \rho : t > 0 \}$.
It follows that for each~$\tau$ in the segment~$[\tau_2, \tau_2']$,
we have
$$ |\psi^{-1}(\tau)|
\le
|\psi^{-1}(\tau_2)|
=
\frac{1 - 2\sqrt{1 - I^2}}{2 + \sqrt{3}I - \sqrt{1 - I^2}}
\le
1 - \frac{\pi}{2 \sqrt{3}}.$$
\end{proof}

\begin{lemm}
\label{l:real minimization}
For every~$w$ in~$\D$ such that~$0 < |w| \le 1 - \frac{\pi}{2 \sqrt{3}}$, we have~$g_1 \circ \whf(w) \ge g_1 \circ \whf(|w|)$, with equality if and only if~$w^3 = |w|^3$.
\end{lemm}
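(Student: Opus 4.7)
The plan is to use the reflection and rotation symmetries to reduce the lemma to a one-variable estimate for a holomorphic function of $\xi := w^3$. Since $\whf(w) = f(w^3)$ and $\whh(w) = h(w^3)$ (from \eqref{def f} and Lemma~\ref{l:holomorphic part}), both real with $f'(0) > 0$ and $h(0) = \Delta(\rho) \ne 0$ (Lemmas~\ref{conjugaciones} and~\ref{radio}), the normalized functions $\tilde h(\xi) := h(\xi)/\Delta(\rho)$ and $\tilde F(\xi) := f(\xi)/(f'(0)\xi)$ are real holomorphic on $B(0, r_0^3)$ and take value~$1$ at~$0$. Combining~\eqref{sobre gD} with the identity $\log|\whf(w)| = \log f'(0) + 3\log|w| + \Re \log \tilde F(w^3)$ yields
\[
g_1 \circ \whf(w) = A(|w|) - \Re P(w^3), \qquad P(\xi) := \log \tilde h(\xi) + \partial_x \ghyp(1) \log \tilde F(\xi),
\]
where $A(r)$ depends only on $r = |w|$ and, using the principal branches of the logarithms, $P$ is holomorphic near~$0$ with $P(0) = 0$ and real Taylor coefficients $(a_n)_{n \ge 1}$. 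Writing $\xi = se^{i\phi}$ with $s = |w|^3$, the lemma reduces to
\[
\Re P(s) - \Re P(se^{i\phi}) = \sum_{n \ge 1} a_n s^n (1-\cos(n\phi)) \ge 0,
\]
strict unless $\phi \in 2\pi\Z$. The elementary bound $1-\cos(n\phi) \le n^2(1-\cos\phi)$ (from $|\sin(n\phi/2)| \le n|\sin(\phi/2)|$) applied to the tail reduces this further to the single inequality $a_1 > \sum_{n \ge 2} n^2 |a_n|\, s^{n-1}$ for all admissible~$s$.

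For the linear coefficient, Lemma~\ref{tercera} together with the chain-rule identity $(\log \whh)^{(3)}(0) = 6\,(\log h)'(0)$ yields $(\log \tilde h)'(0) = f'(0)/13824 \ge 17.19$; while the de Branges bound applied to $f_0 = \varepsilon_1 f(r_0^3\cdot)$ gives $|\tilde F'(0)| = |f''(0)|/(2f'(0)) \le 2/r_0^3$. Combined with $\partial_x \ghyp(1) \le 1/1025$ from Proposition~\ref{l:g uno}, this forces $a_1 \ge 17.08$. For the higher coefficients, the derivative estimate~\eqref{quinto orden} of Proposition~\ref{p:approximation} translates into $|(\log \tilde h)'(\xi) - f'(0)/13824| \le 432\,|\xi|$ on $|\xi| \le (1 - \pi/(2\sqrt{3}))^3$, which upon integration gives a uniform bound on $\log \tilde h(\xi) - (f'(0)/13824)\xi$; simultaneously, the Koebe coefficient bounds $|c_n| \le (n+1)/r_0^{3n}$ on the Taylor expansion of $\tilde F - 1$ control $\log \tilde F$ on any sub-disk of $B(0, r_0^3)$ where $|\tilde F - 1| < 1/2$. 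Cauchy estimates on such a sub-disk then bound $|a_n|$ for $n \ge 2$, and since $s/r_0^3 \le ((1-\pi/(2\sqrt{3}))/(2-\sqrt{3}))^3 \approx 0.042$, the geometric tail $\sum_{n\ge 2} n^2 |a_n|\, s^{n-1}$ is substantially smaller than $a_1 \approx 17$.

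The main obstacle is the bookkeeping: choosing the auxiliary Cauchy radius carefully so that the tail bound remains effective uniformly up to the boundary case $|w| = 1 - \pi/(2\sqrt{3})$, where $s$ attains its maximum. The wide margin $a_1 \approx 17$ against the tail and the smallness $s/r_0^3 \lesssim 0.05$ make this tractable, but the numerics are tight and require the sharp constants from Proposition~\ref{p:approximation} and the Koebe-theoretic estimates of Section~\ref{Koebe}.
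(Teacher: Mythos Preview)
Your decomposition $g_1\circ\whf(w)=A(|w|)-\Re P(w^3)$ with $P$ real-holomorphic at~$0$ is correct, and the reduction via $1-\cos(n\phi)\le n^2(1-\cos\phi)$ to the single inequality $a_1>\sum_{n\ge 2}n^2|a_n|s^{n-1}$ is elegant. The computation of $a_1\approx 17$ is also fine. The gap is in the tail bound at the boundary $s=R:=(1-\pi/(2\sqrt{3}))^3$. The only input you cite for $\log\tilde h$ is~\eqref{quinto orden}, which yields $|(\log\tilde h)'(\xi)-f'(0)/13824|\le 432|\xi|$ \emph{only} on $|\xi|\le R$. Cauchy at radius~$R$ then gives $|b_n|\le 432/(nR^{n-2})$, so the individual terms $n^2|b_n|R^{n-1}\le 432nR$ do not even tend to~$0$, and the tail sum diverges. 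You appear to conflate two Cauchy radii: for $\log\tilde F$ one may indeed work on a disk of radius comparable to~$r_0^3$ (whence the ratio $s/r_0^3\approx 0.042$), but for $\log\tilde h$ Proposition~\ref{p:approximation} gives nothing beyond radius~$R$, and the lemma must cover $s=R$.

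The paper's proof sidesteps this by never passing to Taylor coefficients. It splits into two regimes. When $\Re(w^3/|w|^3)\le\tfrac12$ it compares $g_1\circ\whf(w)$ to $g_1\circ\whf(|w|)$ pointwise, using~\eqref{l:approx} for the $\ghyp$-part and a Koebe bound for $\log|\whf(w)|-\log|\whf(|w|)|$; the gap $\tfrac{f'(0)}{13824}\cdot\tfrac12$ from the angular term dominates the errors. When $\Re(w^3/|w|^3)>\tfrac12$ it shows the angular function $H(\theta)=g_1\circ\whf(re^{i\theta})$ is strictly convex, bounding $H''(\theta)$ from below using~\eqref{quinto orden}, \eqref{cuarto orden}, and the bound on $J'$ from Lemma~\ref{erres grandes}; since $H$ is even, the minimum is at $\theta=0$. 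Your scheme can be repaired along these lines: combining~\eqref{quinto orden} and~\eqref{cuarto orden} gives a uniform bound $|(\log h)''(\xi)|\le C$ on $|\xi|\le R$, and since $P'(s)\in\R$ one has $\Re[P(s)-P(se^{i\phi})]=-\int_0^\phi\int_0^t\Re\,Q''(u)\,du\,dt$ for $Q(t)=P(se^{it})$, giving a bound of order $s^2\phi^2$ that can be compared directly with $a_1s(1-\cos\phi)$. But this is then essentially the paper's convexity argument in the $\xi$-coordinate, not the coefficient-wise estimate you outline.
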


The proof of this lemma is given after the following lemma.

\begin{lemm}\label{erres grandes} 
Let~$J \colon B(0, r_0) \setminus \{ 0 \} \to \C$ be defined by~$J(w)\coloneqq\left( \frac{\whf'}{\whf} \right) (w)\cdot w$.
Then for every~$w$ in~$B\left(0, 1 - \frac{\pi}{2 \sqrt{3}} \right) \setminus \{ 0 \}$ we have $|J'(w)| \leq 4000 |w|^2$.
 \end{lemm}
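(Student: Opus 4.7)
The plan is to reduce this inequality to a Cauchy integral estimate for a holomorphic function at points very close to the origin of the disk where we have good control on that function.

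Substituting $u = w^3$ and using $\whf(w) = f(w^3)$ from~\eqref{def f}, a direct computation gives $J(w) = 3F(u)$, where $F(u) \coloneqq u f'(u)/f(u)$, and therefore $J'(w) = 9w^2 F'(u)$. It thus suffices to prove that $|F'(u)| \le 4000/9$ for every~$u$ with $|u| \le (1 - \pi/(2\sqrt 3))^3$. After the further change of variable $z = u/r_0^3$ that normalizes $f$ into the univalent function $f_0$ from~\eqref{eq:11}, one has $F(u) = G(z)$ with
$$ G(z) \coloneqq z \frac{f_0'(z)}{f_0(z)}, \qquad F'(u) = \frac{G'(z)}{r_0^3}. $$
Because $f_0$ is univalent on $\D$ with $f_0(0) = 0$ and $f_0'(0) = 1$, the function $G$ extends to a holomorphic function on~$\D$ with $G(0) = 1$. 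In the $z$-variable the hypothesis becomes $|z| \le ((1 - \pi/(2\sqrt 3))/r_0)^3$, which is numerically smaller than $0.042$, and the desired inequality reads $|G'(z)| \le 4000 r_0^3/9$, numerically about $8.55$.

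The key step is a Cauchy integral estimate on a circle that is much larger than the disk where $z$ is allowed to range. Since $G - 1$ is holomorphic on $\D$ and vanishes at $0$, for every $|z_0| \le 0.042$ and every $R \in (|z_0|, 1)$ one has
$$ G'(z_0) = \frac{1}{2\pi i}\oint_{|z|=R}\frac{G(z) - 1}{(z - z_0)^2}\,dz, $$
and hence
$$ |G'(z_0)| \le R \cdot \frac{\sup_{|z|=R}|G(z) - 1|}{(R - |z_0|)^2}. $$
On the circle $|z| = R$, Theorem~\ref{Koebe Thm}, \ref{tresKoebe} (equivalently the second inequality in Lemma~\ref{Mas Koebes}) furnishes the uniform bound $|G(z) - 1| \le 2R(1 + R)^2/(1 - R)^3$. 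Choosing $R = 0.15$ yields $\sup_{|z|=R}|G - 1| \le 0.647$, and then, for $|z_0| \le 0.042$, the estimate $|G'(z_0)| \le 0.15 \cdot 0.647/(0.108)^2 \le 8.30$. Translating back using $F'(u) = G'(z)/r_0^3$ and $r_0^3 = 26 - 15\sqrt 3$ gives
$$ |J'(w)| = \frac{9|w|^2 |G'(z)|}{r_0^3} \le 3890\,|w|^2 < 4000\,|w|^2, $$
which is the claimed bound.

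The step I expect to require the most care is purely numerical: choosing the intermediate radius $R$ so that the Cauchy inequality leaves a comfortable margin below the target constant $4000/9$. The value $R = 0.15$ works, though roughly any $R \in [0.12, 0.18]$ would, since the factor $R(1+R)^2/((1-R)^3(R - |z_0|)^2)$ is fairly flat near this choice. Obtaining a constant substantially smaller than $4000$ would instead require a direct Taylor expansion of $F$ around $u = 0$, using Lemma~\ref{tercera} and bounds on the sixth derivative as in the proof of Lemma~\ref{l:sixth order}; for the present purpose, the Cauchy estimate is both shorter and sufficient.
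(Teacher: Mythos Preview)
Your proof is correct and takes a genuinely different route from the paper's. The paper works directly with the algebraic identity
\[
wJ'(w) = J(w)\bigl(3 - J(w)\bigr) + 9w^{6}\,\frac{f''}{f}(w^{3}),
\]
and then bounds the right-hand side term by term: the first factor via $|J(w)| \le 4$ and $|J(w) - 3| \le 400|w|^{3}$ (both from Lemma~\ref{Mas Koebes}), and the second via the third inequality in that same lemma. This yields $|wJ'(w)| \le |w|^{3}\bigl(1600 + (18/r_0^{3})(2+|z|)/(1-|z|)^{2}\bigr)$ and a numerical evaluation below $4000|w|^{3}$.

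By contrast, you change variables once to reduce to bounding $G'(z)$ with $G(z) = z f_0'(z)/f_0(z)$ on the tiny disk $|z|\le 0.042$, and then apply a single Cauchy estimate on a larger circle, feeding in only the Koebe bound $|G(z)-1| \le 2|z|(1+|z|)^{2}/(1-|z|)^{3}$. This avoids both the algebraic identity and the separate bound on $f''/f$; it is shorter and arguably more conceptual, at the cost of introducing a free parameter $R$ that must be tuned. Your numerics are essentially fine (the value $0.15\cdot 0.647/(0.108)^{2}$ is about $8.32$ rather than $8.30$, but this still gives roughly $3893\,|w|^{2}<4000\,|w|^{2}$, so the conclusion stands with margin to spare). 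The paper's approach has the minor advantage that the bound it produces is more transparently increasing in $|z|$, while yours has the advantage of needing only one distortion inequality rather than three.
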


\begin{proof}
Using $J(w) = 3w^3 \left(\frac{f'}{f} \right)(w^3)$ and applying the first two inequalities in Lemma~\ref{Mas Koebes} with $z = w^3/r_0^3$,  we obtain
 $$|J(w)|\leq 3 \frac{1 + |z|}{1 - |z|}
\quad \text{ and } \quad
\left| \frac{J(w)}{3} - 1 \right| \leq |w|^3 \left( \frac{2}{r_0^{3}} \cdot \frac{(1 + |z|)^2}{(1 - |z|)^3} \right). $$
Since these upper bounds are increasing in~$|z|$, a numerical estimate with~$|z|$ replaced by~$\left( 1 - \frac{\pi}{2 \sqrt{3}} \right)^3/r_0^3$ gives
$$ |J(w)| \le 4
\text{ and }
|J(w)-3|\leq 400 |w|^3. $$
Using these inequalities and the third inequality in Lemma~\ref{Mas Koebes}, we have
$$ |J'(w) \cdot w|
=
\left| J(w) \left( 3-J(w) \right) - 9w^6 \left( \frac{f''}{f} \right) (w^3) \right|
\le
|w|^3 \left( 1600 + \frac{18}{r_0^{3}} \cdot \frac{2 + |z|}{(1 - |z|)^2} \right).$$
The desired inequality follows by observing that the upper bound is increasing in~$|z|$ and by estimating it with~$|z|$ replaced by~$\left( 1 - \frac{\pi}{2 \sqrt{3}} \right)^3 / r_0^3$.
\end{proof}

\begin{proof}[Proof of Lemma~\ref{l:real minimization}]
By Theorem~\ref{Koebe Thm}, \ref{unoKoebe}, applied to~$f_0$ defined in~\eqref{eq:11} and~$z = \left(  \frac{w}{r_0} \right)^3$, we have
$$ \log |\whf(w)| - \log |\whf(|w|)|
=
\log \left( \frac{|f(w^3)|}{f(|w|^3)} \right)
\le
2 \log \left(\frac{1 + r_0^{-3} |w|^3}{1 - r_0^{-3} |w|^3}\right)
\le 6r_0^{-3} |w|^3. $$
Here, we have used the elementary inequality
$$ \quad 0\leq x \leq \left(\frac{1-\frac{\pi}{2\sqrt{3}} }{2-\sqrt{3}}\right)^3  \Rightarrow \log\left(\frac{1+x}{1-x}\right) \leq 3x.$$

Assume first that~$w$ satisfies $\Re \left( \frac{w^3}{|w|^3} \right) \le \frac{1}{2}$.
Then, by Lemma~\ref{radio}, Proposition~\ref{l:g uno}, and Proposition~\ref{p:approximation}, we have

\begin{eqnarray}
g_1 \circ \whf (w) - g_1 \circ \whf (|w|)
&\ge&
\frac{f'(0)}{13824} |w|^3 \left( 1 - \Re \left( \frac{w^3}{|w|^3} \right) \right)
- 2 \cdot 6^3 |w|^6 - \left(\frac{6}{1025} r_0^{-3}\right) |w|^3\nonumber \\
&\ge&  |w|^3\left( \frac{237698}{13824}\cdot \frac{1}{2} -2\cdot 6^3 \cdot \left(1-\frac{\pi}{2\sqrt{3}}\right)^3 -\frac{6 r_0^{-3}}{1025} \right)\nonumber\\
&\ge& |w|^3\nonumber\\
&> &0.\nonumber
\end{eqnarray}

Now we assume $ \Re \left( \frac{w^3}{|w|^3} \right) > \frac{1}{2}$, put~$r \coloneqq |w|$ and~$\theta \coloneqq \arg(w)$, and let~$H \colon \R \to \R$ be the function
defined by~$H(\ttheta) \coloneqq g_1 \circ \whf \left( r \exp \left( i \ttheta \right) \right)$.
Using the function~$J$ defined in Lemma~\ref{erres grandes}, we have
\begin{equation*}
  \begin{split}
H''(\theta)
& =
\Re \left[ (\log \whh)''(r \exp(i\theta)) r^2 \exp(2i\theta)
+ (\log \whh)' (r \exp(i\theta)) r \exp(i\theta)
\right. \\ & \quad \left.
+ \partial_x \ghyp(1) \cdot  J'(r \exp(i\theta))\cdot r \exp(i\theta)
 \right].
  \end{split}
\end{equation*}
Combining Proposition~\ref{l:g uno}, Proposition~\ref{p:approximation}, \eqref{quinto orden} and~\eqref{cuarto orden} and Lemmas~\ref{radio} and~\ref{erres grandes}, and using $\Re(\exp(3i\theta)) \ge\frac{1}{2}$, we have
\begin{equation*}
  \begin{split}
H''(\theta)
 & \ge 
\frac{9 f'(0)}{13824} r^3 \Re(\exp(3i\theta))
-5 \cdot 6^4 r^6 - 6^4 r^6 - 4 r^3
 \\ & \geq
r^3 \left( \frac{9 \cdot 237698}{13824} \cdot \frac{1}{2} -6^5 \left(1 - \frac{\pi}{2 \sqrt{3}} \right)^3 - 4 \right)
 \\ & \ge r^3.
  \end{split}
\end{equation*}

This proves that, if we denote by~$\theta_0$ the unique number in~$\left[0,
  \frac{\pi}{3} \right]$ such that~$\Re(\exp(3i\theta_0)) =
\frac{1}{2}$, then~$H$ is strictly convex on $[- \theta_0,
\theta_0]$.
Moreover, Lemma~\ref{conjugaciones} implies that~$H$ is even, hence it attains its minimum
on~$[-\theta_0, \theta_0]$ at, and only, at~$\theta = 0$.
This completes the proof of the lemma.
\end{proof}

\begin{lemm}
\label{l:convexity}
The restriction~$V$ of~$g_1 \circ \whf$ to~$\left(0, 1 - \frac{\pi}{2 \sqrt{3}} \right]$ is strictly convex.
Moreover, if~$r_1$ is the only number in $\left(0, 1 - \frac{\pi}{2 \sqrt{3}} \right)$ such that $\whf(r_1)=1$, \emph{cf}. Figure~\ref{mono}, then~$V$  attains its minimum at, and only at, $r=r_1$.
\end{lemm}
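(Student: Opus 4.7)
The plan is to decompose $V(r) = g_{\D}(r) - \partial_x \ghyp(1) \log \whf(r)$ and handle the two assertions separately: that $V'(r_1)=0$ will follow directly from the very definition of $\partial_x\ghyp(1)$, while the uniqueness of the minimum will come from the stronger statement that $V$ is strictly convex on the whole interval and blows up at~$0$. First I should check the setup makes sense on $(0,1-\pi/(2\sqrt{3})]$: since $f$ is univalent on $B(0,r_0^3)$ with $f(0)=0$ and $f'(0)>0$ (Lemmas~\ref{radio} and~\ref{conjugaciones}), the function $\whf(r)=f(r^3)$ is strictly positive for $r\in(0,r_0)$; and $\whh$ is non-vanishing and real on $\D$ with $\whh(0)=\Delta(\rho)<0$ by Lemma~\ref{Edos}\ref{ddelta}, so $-\whh$ admits a real-analytic logarithm whose second derivative coincides with the holomorphic $(\log\whh)''$. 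Using~\eqref{sobre gD}, this yields
$$g_{\D}''(r)=\frac{12(1+r^2)}{(1-r^2)^2}-(\log\whh)''(r).$$

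For the critical point, the chain rule and $\whf(r_1)=1$ give $V'(r_1)=g_{\D}'(r_1)-\partial_x\ghyp(1)\cdot\whf'(r_1)$. Since $g_{\D}$ is real, $g_{\D}'(r_1)=2\Re(\partial g_{\D}(r_1))$, and identity~\eqref{identidad basica} (which came up in the proof of Proposition~\ref{l:g uno}) says precisely that $\partial_x\ghyp(1)\cdot\whf'(r_1)=2\Re(\partial g_{\D}(r_1))$, so $V'(r_1)=0$. For strict convexity I would estimate each of the three terms in
$$V''(r)=\frac{12(1+r^2)}{(1-r^2)^2}-(\log\whh)''(r)-\partial_x\ghyp(1)\cdot(\log\whf)''(r).$$
The first is at least $12$. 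For the second, estimate~\eqref{cuarto orden} gives $|(\log\whh)''(r)|\le\frac{6f'(0)}{13824}r+5\cdot 6^4 r^4$. For the third, writing $(\log\whf)''(r)=J'(r)/r-J(r)/r^2$ with $J$ as in Lemma~\ref{erres grandes}, the inequalities $|J(r)-3|\le 400 r^3$ and $|J'(r)|\le 4000 r^2$ established in the proof of that lemma yield $(\log\whf)''(r)=-3/r^2+\epsilon(r)$ with $|\epsilon(r)|\le 4400 r$. Since $\partial_x\ghyp(1)\ge 0$, the term $3\partial_x\ghyp(1)/r^2$ is a free positive contribution, and dropping it gives $V''(r)\ge 12-|(\log\whh)''(r)|-4400\,\partial_x\ghyp(1)\,r$. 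Combining with Proposition~\ref{l:g uno} and the bound $r\le 1-\pi/(2\sqrt{3})<0.094$, a direct numerical estimate provides $V''(r)\ge 1$ throughout the interval.

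To conclude, as $r\to 0^+$ one has $\log\whf(r)\to-\infty$ while $g_{\D}$ stays bounded, so $V(r)\to+\infty$; combined with strict convexity and $V'(r_1)=0$ (noting that by Lemma~\ref{l:w uno} the number $r_1$ lies in $(0,r_1^+]$, hence safely inside $(0,1-\pi/(2\sqrt{3})]$ since $r_1^+$ is comparable to $f'(0)^{-1/3}\approx 0.016$), the point $r_1$ is the unique minimum of $V$. The main obstacle is the tightness of the convexity estimate near the upper boundary $r=1-\pi/(2\sqrt{3})$: with $\partial_x\ghyp(1)$ of order $10^{-3}$, the negative contribution $-(\log\whh)''(r)$ can reach magnitudes close to $10$ while the positive term $12(1+r^2)/(1-r^2)^2$ is only slightly larger, so one must genuinely use that we are restricted to this relatively small radius, and the bounds from Lemma~\ref{erres grandes} and estimate~\eqref{cuarto orden} must be applied sharply.
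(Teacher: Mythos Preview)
Your proof is correct and follows essentially the same approach as the paper: both decompose $V''(r)$ via \eqref{sobre gD} and the function $J$ from Lemma~\ref{erres grandes}, drop the positive $\partial_x\ghyp(1)\cdot J(r)/r^2$ (or $3\partial_x\ghyp(1)/r^2$) contribution, bound the remaining terms using \eqref{cuarto orden}, Lemma~\ref{erres grandes}, and Proposition~\ref{l:g uno}, and conclude $V''(r)\ge 1$ numerically; the paper establishes $V'(r_1)=0$ by the chain rule applied to $g_{\D}=\ghyp\circ\whf$, while you equivalently invoke~\eqref{identidad basica}, and your blow-up argument at $r\to 0^+$ is harmless but unnecessary once strict convexity and $V'(r_1)=0$ are in hand.
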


\begin{proof}
Since~$\whf$ is real (Lemma~\ref{conjugaciones}), $f$ is also real.
Together with the fact that~$f$ is univalent and that~$f(0) = 0$ and~$f'(0) > 0$ (Lemma~\ref{radio}), we conclude for each~$r$ in~$(0, r_0)$ we have~$J(r) = 3r^3f'(r^3)/f(r^3) > 0$.

By~\eqref{cuarto orden} in Proposition~\ref{p:approximation}, Proposition~\ref{l:g uno}, and Lemmas~\ref{radio} and~\ref{erres grandes}, we have
\begin{equation*}
  \begin{split}
V''(r)
& =
 \frac{12(1+r^2)}{(1 - r^2)^2} - (\log \whh)''(r) +   \partial_x \ghyp(1) \Re\left(\frac{J(r)}{r^2} \right) -  \partial_x \ghyp(1) \Re \left( \frac{J'(r)}{r} \right)   
\\  & \ge 
 \frac{12(1+r^2)}{(1 - r^2)^2}  -\frac{6f'(0)}{13824}r -5\cdot 6^4\cdot r^4
-\frac{1}{1025} 4000 r
 \\ & \ge
12 - \left(1 - \frac{\pi}{2 \sqrt{3}} \right) \left( \frac{6 \cdot 237698}{13824} + 5\cdot 6^4\cdot \left(1 - \frac{\pi}{2 \sqrt{3}} \right)^3 + 4 \right)
\\ & \ge
1.
  \end{split}
\end{equation*}
This proves that~$V$ is strictly convex on~$\left(0,1-\frac{\pi}{2\sqrt{3}} \right)$.

To finish the proof, it is enough to show that $r=r_1$ is a critical point of~$V$.
Indeed,
$$ V'(r)
=
\partial_x g_\D (r) - \partial_x \ghyp(1)\Re\left(\left( \frac{\whf'}{\whf} \right) (r) \right).$$
The relation $g_\D=\ghyp \circ \whf$ implies $\partial_x g_\D = (\partial_x \ghyp)\circ \whf \cdot \partial_x \whf$.
Since $\whf(r_1)=1$ and~$\whf$ is real, we see that $V'(r_1)=0$.
This completes the proof of the lemma.
\end{proof}

We now proceed to the proof of Proposition~\ref{p:minima} in the remaining case where $\tau$ satisfies $\Im(\tau) \leq \frac{1}{\pi}\log(19) $.
Lemma~\ref{tamano} ensures that~$w \coloneqq \psi^{-1}(\tau)$ satisfies $|w|\leq 1-\frac{\pi}{2\sqrt{3}}$.  Then, combining Lemmas~\ref{l:real minimization} and~\ref{l:convexity}, we have
$$ g_1(\zeta)
=
g_1 \circ j(\tau)
=
g_1 \circ \whf (w)
\ge
g_1 \circ \whf (|w|)
\geq
g_1 \circ \whf(r_1)
=
g_1(1),$$
with equality if and only if $j(\tau)=1$. This finishes the proof of Proposition~\ref{p:minima}.

\section{Numerical experiments}
\label{sec:numer-exper}

In this section we briefly describe our numerical experiments that give us two more minima of the stable Faltings height, both of which are larger than~$\Fh(0)$ and~$\Fh(1)$.
We use the procedure described Section~\ref{modular} to find a lower bound of~$\Fmuess$, with a carefully chosen family of sections.
The minima of~$\Fh$ that we find are attained at the common support of these sections.
See~\cite{BMRan} for the SAGE source code we use in our experiments and further details.

Recall the metrized line bundle $\LLL=(M_{12},\npet{\cdot})$ of weight 12
modular forms with the Petersson metric defined in Section~\ref{modular}. We have that $\LL \simeq
O_{\X}(D_\infty)$. 
The sections of $\LL^{\otimes n}$ are in one to one correspondence with the
space of homogeneous polynomials of degree $n$ with integral
coefficients in the variables $X,Y$, where $[X:Y]$ are
homogeneous coordinates of $\p$ and the point at infinity is~$[1:0]$. 

We start with the section $s_{0}= \Delta $.
Using the notation introduced above, this is the section~$Y$ that has a zero at infinity. 
Then $s_{0}^{1/12}$ has weight one and $g_{s_{0}^{1/12}}(\zeta)=\frac{1}{12}\ghyp(\zeta)$, so by Lemma~\ref{l:really increasing} we have
\begin{displaymath}
  \inf_{\zeta \in \X(\C)}g_{s_{0}^{1/12}}(\zeta)
=
g_{s_{0}^{1/12}}(0)
=
-0.74875248\dots,
\end{displaymath}
which proves that the minimum value of Faltings' height is~$h_0 \coloneqq \Fh(0) = g_{s_{0}^{1/12}}(0)$.

We next define
$$ s_{1} \coloneqq X,
a_{1,1} \coloneqq \frac{1}{12}\partial_{x}\ghyp(1)
\text{ and }
s_{a_{1,1}}
\coloneqq
s_{1}^{a_{1,1}}s_{0}^{1/12-a_{1,1}}
=
X^{a_{1,1}} Y^{1/12-a_{1,1}}, $$
so that~$12 g_{s_{a_{1, 1}}} = g_1$ is the function defined in Proposition~\ref{p:minima}.
By this proposition we know that $g_{s_{a_{1,1}}}$ attains its
minimum at the point $1$, so
\begin{displaymath}
  h_{1}\coloneqq
  \Fh(1)=g_{s_{a_{1,1}}}(1)
=
\inf_{\zeta \in \X(\ce)}g_{s_{a_{1,1}}}(\zeta)
=
-0.74862817\dots
\end{displaymath}
To check that this value the second minimum of~$\Fh$, write
\begin{displaymath}
  s_{2}
\coloneqq
X - Y
\end{displaymath}
and consider sections of the form
\begin{displaymath}
  s_{a_{2,1},a_{2,2}}
\coloneqq
s_{1}^{a_{2,1}}s_{2}^{a_{2,2}}s_{0}^{1/12-a_{2,1}-a_{2,2}}.
\end{displaymath}
We compute numerically
\begin{displaymath}
\sup_{a_{2,1},a_{2,2}}\inf _{\zeta \in \X(\ce)} g_{s_{a_{2,1},a_{2,2}}} (\zeta)
=
-0.74862517\dots,
\end{displaymath}
which is a lower bound of~$\Fh$ on~$\overline{\Q} \setminus \{0, 1 \}$.
Since this number is larger than~$h_1$, this proves that $h_{1}$ is the second minimum of~$\Fh$ on~$\overline{\Q}$.
The experimental values of the coefficients are
\begin{displaymath}
  a_{2,1}=0.0000808846,\qquad a_{2,2}=0.000006017184
\end{displaymath}
and the new minimum is attained at the points
\begin{displaymath}
0.50004865+i∗0.86601467
\text{ and }
0.50004865+i∗0.86601467.
\end{displaymath}

The numbers above are very close to the solutions~$\rho$ and~$\overline{\rho}$ of the equation
$z^{2}-z+1=0$, which are the primitive roots of unity of order~$6$.
We write
\begin{displaymath}
  h_{2}\coloneqq \Fh(\rho)=-0.74862517\dots
\text{ and }
s_{3} \coloneqq X^{2} - XY + Y^{2}
\end{displaymath}
and consider sections of the form
\begin{displaymath}
  s_{a_{3,1},a_{3,2},a_{3,3}}
\coloneqq
s_{1}^{a_{3,1}}s_{2}^{a_{3,2}}s_{3}^{a_{3,3}}s_{0}^{1/12-a_{3,1}-a_{3,2}-2a_{3,3}}.   
\end{displaymath}
Numerically we obtain
\begin{displaymath}
  \sup_{a_{3,1},a_{3,2},a_{3,3}}\inf _{\zeta \in \X(\ce)} g_{s_{a_{3,1},a_{3,2},a_{3,3}}} (\zeta)
=
-0.74862386\dots,
\end{displaymath}
which is a lower bound of~$\Fh$ on~$\overline{\Q} \setminus \{0, 1, \rho, \overline{\rho} \}$.
Since this number is larger than~$h_2$, this shows that $h_{2}$ is the third minimum of~$\Fh$ on~$\overline{\Q}$.
The coefficients we obtain are
\begin{displaymath}
  a_{3,1}=0.00007979626,\quad  a_{3,2}=0.000004433084,\quad
  a_{3,3}=0.000002454098.
\end{displaymath}

Testing other roots of unity, we found that if~$\xi $ is a primitive root of unity of order~$10$, then
\begin{displaymath}
  h_{3}\coloneqq \Fh(\xi)=-0.74862366 \dots
\end{displaymath}
is close to the next possible value of Faltings' height.
The corresponding cyclotomic polynomial is~$z^4 - z^3 + z^2 - z + 1$, so we write
$$ s_{4} \coloneqq X^{4} - X^{3}Y + X^{2}Y^{2} - XY^{3} + Y^{4} $$
and consider sections of the form
\begin{displaymath}
  s_{a_{4,1},a_{4,2},a_{4,3},a_{4,4}}
\coloneqq
s_{1}^{a_{4,1}}s_{2}^{a_{4,2}} s_{3}^{a_{4,3}}s_{4}^{a_{4,4}}s_{0}^{1/12-a_{4,1}-a_{4,2}-2a_{4,3}-4a_{4,4}}.
\end{displaymath}
Numerically we obtain
\begin{displaymath}
\sup_{a_{4,1},a_{4,2},a_{4,3},a_{4,4}}\inf _{\zeta \in \X(\ce)} g_{s_{a_{4,1},a_{4,2},a_{4,3},a_{4,4}}}
=
-0.74862360\dots 
\end{displaymath}
which gives us a new lower bound for~$\Fmuess$ and shows that $h_{3}$ is the fourth minimum of Faltings' height.
The corresponding coefficients are
\begin{alignat*}{2}
  a_{4,1}&=0.000078055985,\qquad& a_{4,2}&=0.000003803298,\\  a_{4,3}&=0.000002385096,\qquad&  a_{4,4}&=0.000000865203.
\end{alignat*}

 %\bibliography{/Users/ricardomenares/Desktop/Dropbox/Proyecto minimo esencial/Manuscrito minimo esencial/On the essential minimum of Faltings height/citas.bib}
% \bibliography{citas.bib}
\bibliographystyle{alpha}

\Addresses

\end{document}